\renewcommand{\theenumi}{{\upshape{(\roman{enumi})}}}
\renewcommand{\labelenumi}{\theenumi}
        \theoremstyle{plain}
        \newtheorem{theorem}{Theorem}[section]
        \newtheorem{corollary}[theorem]{Corollary}
        \newtheorem{lemma}[theorem]{Lemma}
        \newtheorem{proposition}[theorem]{Proposition}
        \newtheorem{question}[theorem]{Question}
        \theoremstyle{definition}
        \newtheorem{definition}[theorem]{Definition}
        \newtheorem{example}[theorem]{Example}
        \newtheorem{notation}[theorem]{Notation}
        \theoremstyle{remark}
        \newtheorem{remark}[theorem]{Remark}
\newcommand{\cC}{\mathbf{C}} 
\newcommand{\cD}{\mathbf{D}}
\newcommand{\cK}{\mathbf{K}}
\newcommand{\cQ}{\mathbf{Q}}
\newcommand{\Coh}{\mathbf{Coh}} 
\newcommand{\Cat}{\mathbf{Cat}} 
\newcommand{\Mod}{\mathbf{Mod}}
\newcommand{\op}{\mathrm{op}} 
\newcommand{\QC}{\mathrm{QC}} 
\newcommand{\BGL}{\mathrm{BGL}} 
\newcommand{\sQ}{\mathcal{Q}} 
\newcommand{\Ass}{\mathrm{Ass}}
\DeclareMathOperator{\coker}{coker}
\DeclareMathOperator{\Isom}{Isom}
\DeclareMathOperator{\Aut}{Aut}
\let\Im\undefined
\DeclareMathOperator{\Im}{Im}
\newcommand{\spref}[1]{\href{http://stacks.math.columbia.edu/tag/#1}{#1}}
\newcommand{\Aff}{\mathbb{A}} 
\newcommand{\AlgSt}{\mathsf{AlgSt}} 
\DeclareMathOperator{\Ext}{Ext}
\newcommand{\ant}{\mathrm{ant}} 
\renewcommand{\QCoh}{\mathsf{QCoh}}
\renewcommand{\Mod}{\mathsf{Mod}}
\renewcommand{\Coh}{\mathsf{Coh}}
\newcommand{\Alg}{\mathsf{CAlg}}
\newcommand{\CMon}{\Alg}
\newcommand{\sSpec}{\mathcal{S}\mathrm{pec}}
\newcommand{\Proj}{\mathrm{Proj}}
\newcommand{\GATC}{\mathbf{GTC}}
\newcommand{\LDERF}{\mathsf{L}}
\newcommand{\weilr}{\mathbf{R}} 
\newcommand{\TF}[1]{\omega_{{#1}}}
\newcommand{\TFiso}[1]{\TF{{#1},\simeq}}
\newcommand{\ft}{\mathrm{ft}} 
\numberwithin{equation}{section}
\begin{document}

\title[Tannaka duality]{Coherent Tannaka duality and algebraicity of Hom-stacks}

\author{Jack Hall}
\address{Mathematical Sciences Institute, The Australian National
  University, Acton ACT 2601, Australia}
\email{jack.hall@anu.edu.au}
\author{David Rydh}
\address{KTH Royal Institute of Technology\\Department of Mathematics\\100 44 Stockholm\\Sweden}
\email{dary@math.kth.se}
\thanks{The first author is supported by the Australian Research Council DE150101799}
\thanks{This collaboration was supported by the G\"oran Gustafsson foundation.
The second author is also supported by the Swedish Research Council 2011-5599
and 2015-05554.}
\date{June 29, 2016}
\subjclass[2010]{Primary 14A20; Secondary 14D23, 18D10}
\keywords{Tannaka duality, algebraic stacks, Hom-stacks, Mayer--Vietoris squares, formal gluings}

\begin{abstract}
We establish Tannaka duality for noetherian algebraic stacks with affine stabilizer groups. Our main application is the existence of $\underline{\mathrm{Hom}}$-stacks in great generality.

\end{abstract}

\maketitle


\section{Introduction}
Classically, Tannaka duality reconstructs a group from its category of finite-dimensional 
representations \cite{tannaka_duality}. Various incarnations of Tannaka duality have been studied for 
decades. The focus of this article is a recent formulation for algebraic stacks \cite{lurie_tannaka} which we now recall.

Let $X$ be a noetherian algebraic stack. We denote its abelian category of coherent 
sheaves by $\Coh(X)$. If $f\colon T\to X$ is a morphism of noetherian algebraic stacks, then there is an induced pullback functor
\[
f^* \colon \Coh(X) \to \Coh(T).
\]
It is well-known that $f^*$ has the following three properties:
\begin{enumerate}
\item $f^*$ sends $\sO_X$ to $\sO_T$,
\item $f^*$ preserves the tensor product of coherent sheaves, and
\item $f^*$ is a right exact functor of abelian categories.
\end{enumerate}
Hence, there is a functor
\begin{align*}
  \Hom(T,X) &\to \Hom_{r\otimes,\simeq}\bigl(\Coh(X),\Coh(T)\bigr),\\
  (f\colon T \to X) &\mapsto \bigl(f^* \colon \Coh(X) \to \Coh(T)\bigr),
\end{align*}
where the right hand side denotes the category with objects the functors $F \colon \Coh(X) \to \Coh(T)$ satisfying conditions (i)--(iii) above and morphisms given by natural isomorphisms of functors. 

If $X$ has affine diagonal (e.g., $X$ is the quotient of a variety by an affine algebraic 
group), then the functor above is known \cite{lurie_tannaka} to be fully faithful with image consisting of \emph{tame} functors. Even though tameness of a functor is a difficult condition to verify, Lurie was able to establish some striking applications to algebraization problems. 

Various stacks of singular curves~\cite[\S4.1]{alper_kresch_versal} and log stacks can fail to have affine, quasi-affine, or even separated diagonals. In particular, for applications in moduli theory, the results of \cite{lurie_tannaka} are insufficient. The main result of this article is the following theorem, which besides removing Lurie's hypothesis of affine diagonal, obviates tameness.
\begin{theorem}\label{T:main_tannaka:aff_stab}
  Let $X$ be a noetherian algebraic stack with affine stabilizers.
  For every locally excellent algebraic stack $T$, the functor
  \[
  \Hom(T,X) \to \Hom_{r\otimes,\simeq}\bigl(\Coh(X),\Coh(T)\bigr)
  \]
  is an equivalence.
\end{theorem}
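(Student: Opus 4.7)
My plan is to prove the two equivalences in the theorem in succession, treating the second (between $\Coh$ and $\QCoh$) as essentially formal and concentrating on the first. Since $X$ is noetherian and $T$ is locally excellent, hence locally noetherian, one has $\QCoh(X)=\operatorname{Ind}\Coh(X)$ and similarly for $T$. Left Kan extension along $\Coh(X)\hookrightarrow \QCoh(X)$ followed by restriction should then yield mutually inverse equivalences between right-exact symmetric monoidal functors $\Coh(X)\to\Coh(T)$ and cocontinuous symmetric monoidal functors $\QCoh(X)\to\QCoh(T)$; the key points are that a cocontinuous functor preserves $\operatorname{Ind}$-objects and that the symmetric monoidal structure restricts cleanly to coherent sheaves. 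Both source and target of the remaining first arrow form stacks on the big fppf site of $T$, so by descent I may reduce to $T=\operatorname{Spec} A$ with $A$ noetherian excellent.

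For such $T$, I would proceed by devissage on $X$. The base case is Lurie's theorem, which already yields the result whenever $X$ has affine diagonal, and in particular for quotients $[\operatorname{Spec} R/G]$ with $G$ an affine algebraic group. To extend this to general $X$ with affine stabilizers, I would combine two ingredients: (a) a local structure theorem (of Alper--Hall--Rydh type) guaranteeing that every point $x\in X$ admits an étale neighbourhood of the form $[\operatorname{Spec} R/G_x]$ with $G_x$ the affine stabilizer at $x$, so Lurie's base case applies locally; and (b) a Mayer--Vietoris/formal-gluing principle saying that, for a closed substack $Z\hookrightarrow X$ with open complement $U$, a morphism $T\to X$ (and likewise a right-exact symmetric monoidal functor on $\Coh(X)$) is equivalent to the data of a morphism $T\to U$, a compatible system of morphisms to the infinitesimal thickenings of $Z$ in $X$, and a gluing datum over the punctured formal neighbourhood. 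Combined with noetherian induction on $X$, these inputs should reduce the global statement along a stratification of $X$ whose strata are handled by the local base case.

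The main obstacle will be proving the Mayer--Vietoris/formal-gluing principle on both sides of the Tannakian comparison and matching them up. On the $\Coh$-side, this asks one to realise $\Coh(X)$ as a pullback, compatible with symmetric monoidal tensor products, of $\Coh(U)$ with the formal coherent categories $\Coh(\widehat{X}_Z)$ over the overlap; on the $\Hom$-side, it asks for a formal gluing theorem for morphisms into $X$ together with an algebraisation step lifting a compatible family $\{T\to X_n\}$ to a morphism $T\to \widehat{X}_Z$. The latter is particularly delicate because $X$ is not assumed to have quasi-affine, or even separated, diagonal, so classical Grothendieck existence is unavailable in its usual form; devising a substitute valid for stacks with only affine stabilizers is the technical heart of the argument, and once in hand the remainder of the proof becomes a matter of propagating the gluing through the Tannakian comparison and invoking Lurie's theorem on the strata.
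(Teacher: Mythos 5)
Your global architecture (reduce to affine excellent $T$, stratify $X$, glue a morphism from its restriction to an open substack and to the infinitesimal neighbourhoods of the complementary closed substack, and induct) is indeed the skeleton of the paper's proof, and your identification of the formal-gluing/algebraisation step for non-separated diagonals as a technical heart is accurate (this is Appendix~\ref{S:formal-gluings-etale} and Corollary~\ref{C:tannakian-over-closed+open}). However, your base case is a genuine gap. Lurie's theorem for $X$ quasi-compact with affine diagonal does \emph{not} yield that $\omega_X(T)$ is essentially surjective onto all cocontinuous tensor functors: it identifies the essential image with the \emph{tame} functors, i.e.\ those preserving faithful flatness, and verifying tameness for an arbitrary cocontinuous tensor functor is precisely the obstruction this paper is designed to circumvent. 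The paper's actual base case is different and has two layers: first, stacks with affine stabilizers and the \emph{resolution property} are tensorial (Theorem~\ref{T:resprop-tensorial}, via Gross--Totaro reducing to $\BGL_{N,\Z}$ and exploiting that tensor functors preserve dualizable objects, i.e.\ vector bundles); second, and crucially, the Main Lemma~\ref{L:funny_nil_res} extends tensoriality to nilpotent thickenings of such stacks. This second layer cannot be skipped, because it is an open question (Question~\ref{Q:deformation-of-resprop}) whether an infinitesimal neighbourhood of a stack with the resolution property again has the resolution property, so the strata's thickenings $Y_k^{[n]}$ are not covered by the first layer alone.

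Your ingredient (a) is also problematic on two counts. The \'etale-local quotient structure theorem of Alper--Hall--Rydh type is \emph{proved using} Theorem~\ref{T:main_tannaka:aff_stab} (this is the forthcoming application cited in the introduction), so invoking it here is circular. Independently of that, devissage along an \'etale cover of $X$ is not available on the tensor-functor side: the localization technique of Section~\ref{S:tensor-localization} lets a cocontinuous tensor functor $\QCoh(X)\to\QCoh(T)$ be restricted along quasi-compact \emph{open immersions} (more generally flat monomorphisms) $U\subseteq X$, but there is no analogous way to transport such a functor along a non-monomorphic \'etale cover $X'\to X$, so "Lurie's base case applies locally" does not propagate to $X$. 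The paper instead uses a global constructible stratification of $X$ by quotient stacks $[U_k/\GL_{N_k}]$ with $U_k$ quasi-affine (Proposition~\ref{P:filtrations-exists}), which is exactly what feeds the resolution-property base case. With these two corrections--replacing the Lurie/local-structure base case by the resolution-property theorem plus the Main Lemma, and replacing \'etale devissage on $X$ by the global stratification--your outline aligns with the paper's proof.
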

That $X$ has \emph{affine stabilizers} means that $\Aut(x)$ is affine for every field
$k$ and point $x\colon \Spec k\to X$; equivalently, the diagonal of $X$ has
affine fibers. An algebraic stack is \emph{locally excellent} if there exists a smooth presentation by an excellent scheme (see
Remark~\ref{R:excellent}); this includes every algebraic stack that is
locally of finite type over a field, over $\Z$, or over a complete local
noetherian ring. We also wish to emphasize that we do not assume that the
diagonal of $X$ is separated in Theorem~\ref{T:main_tannaka:aff_stab}. The restriction to stacks with affine stabilizers is a necessary
condition for the equivalence in Theorem~\ref{T:main_tannaka:aff_stab} (see
Theorem~\ref{T:failure_tensoriality}).

Theorem \ref{T:main_tannaka:aff_stab} is a consequence of Theorem \ref{T:main_tannaka_non-noeth}, which also gives various refinements in the non-noetherian situation and when $X$ has quasi-affine or quasi-finite diagonal.

\subsection*{Main applications}
In work with J. Alper \cite{AHR_lunafield}, Theorem \ref{T:main_tannaka:aff_stab} is applied to resolve Alper's conjecture on the local quotient structure of algebraic stacks \cite{Alper20101576}. A more immediate  application of Theorem \ref{T:main_tannaka:aff_stab} is the
following algebraicity result for $\Hom$-stacks, generalizing all
previously known results and answering \cite[Question
1.4]{MR2786662}.
\begin{theorem}\label{T:main_alg_hom}
  Let $Z\to S$ and $X\to S$ be morphisms of algebraic stacks such that
  $Z \to S$ is proper and flat of finite presentation,
  and $X \to S$ is locally of
  finite presentation, quasi-separated, and has affine stabilizers. Then
  \begin{enumerate}
  \item \label{TI:main_alg_hom:rep} the stack $\underline{\Hom}_S(Z,X)\colon
    T\mapsto \Hom_S(Z\times_S T,X)$, is algebraic;
  \item \label{TI:main_alg_hom:qs} the morphism $\underline{\Hom}_S(Z,X)\to S$ is locally of finite
    presentation, quasi-separated, and has affine stabilizers; and
  \item \label{T:main_alg_hom:diag} if $X\to S$ has affine (resp.\ quasi-affine,
    resp.\ separated) diagonal, then so has $\underline{\Hom}_S(Z,X)\to S$.
  \end{enumerate}
\end{theorem}
Theorem \ref{T:main_alg_hom} has already seen applications to log geometry \cite{wise_logmaps}, an area which provides a continual source of stacks that are neither Deligne--Mumford nor have separated diagonals. In general, the condition that
$X$ has affine stabilizers is necessary (see Theorem \ref{T:failure_hom-stack}).

There are analogous algebraicity results for Weil restrictions (that is, restrictions of scalars).
\begin{theorem}\label{T:main_alg_weilr}
  Let $f\colon Z \to S$ and $g\colon X\to Z$ be morphisms of algebraic stacks
  such that $f$ is proper and flat of finite presentation and
  $f\circ g$ is locally of finite presentation, quasi-separated and has affine
  stabilizers. Then
  \begin{enumerate}
  \item the stack $f_*X=\weilr_{Z/S}(X)\colon T\mapsto
    \Hom_Z(Z\times_S T,X)$ is algebraic;
  \item the morphism $\weilr_{Z/S}(X)\to S$ is locally of finite
    presentation, quasi-separated and has affine stabilizers; and
  \item if $g$ has affine (resp.\ quasi-affine, resp.\ separated) diagonal,
    then so has $f_*X\to S$.
  \end{enumerate}
\end{theorem}
When $Z$ has finite diagonal and $X$ has quasi-finite and
separated diagonal,
Theorems~\ref{T:main_alg_hom} and~\ref{T:main_alg_weilr} were proved in
\cite[Thms.~3 \& 4]{MR3148551}. 
In Corollary~\ref{C:main_alg_loc-approx}, we also 
excise the finite presentation assumptions on $X \to S$ in Theorems \ref{T:main_alg_hom} and \ref{T:main_alg_weilr}, generalizing the results of \cite[Thm.~2.3 \& Cor.~2.4]{hallj_dary_g_hilb_quot} for stacks with quasi-finite diagonal. 

\subsection*{Application to descent}
If $X$ has quasi-affine diagonal, then it is well-known that it is a stack for the fpqc topology \cite[Cor.~10.7]{MR1771927}. In general, it is only known that algebraic stacks satisfy effective descent for fppf coverings. Nonetheless,
using that $\QCoh$ is a stack for the fpqc-topology and Tannaka duality,
we are able to establish the 
following result.
\begin{corollary}\label{C:application-fpqc-stack}
  Let $X$ be a quasi-separated algebraic stack with affine
  stabilizers. Let $\pi\colon T'\to T$ be an fpqc covering such
  that $T$ is a locally excellent stack and $T'$ is locally noetherian.
  Then $X$ satisfies effective
  descent for $\pi$.
\end{corollary}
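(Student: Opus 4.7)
The plan is to transfer effective descent for $X$ to fpqc descent for symmetric monoidal functors from $\QCoh(X)$, and then exploit the fpqc sheaf property of $\Hom(\QCoh(X),\QCoh(-))$ noted in the discussion preceding the corollary.

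Fix a descent datum $(f,\alpha)$ along $\pi$: a morphism $f\colon T'\to X$ together with a cocycle-compatible isomorphism $\alpha\colon \pi_1^*f\simeq \pi_2^*f$ over $T'':=T'\times_T T'$. Applying the Tannaka functor $\omega$ on $T'$, $T''$ and $T''':=T'\times_T T'\times_T T'$ produces a descent datum for a cocontinuous symmetric monoidal functor $\bar f\colon \QCoh(X)\to \QCoh(T')$ along $\pi$. Since $\QCoh$ is an fpqc sheaf of symmetric monoidal categories, this descends to a cocontinuous symmetric monoidal functor $\bar g\colon \QCoh(X)\to \QCoh(T)$ together with a natural isomorphism $\pi^*\bar g\simeq \bar f$ compatible with $\alpha$. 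Since $T$ is locally excellent and $X$ has affine stabilizers, Theorem~\ref{T:main_tannaka:aff_stab} (in its non-noetherian refinement Theorem~\ref{T:main_tannaka_non-noeth}) then produces a morphism $g\colon T\to X$ with $\omega(g)\simeq \bar g$.

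It remains to verify that $\pi^*g$ agrees with $f$ as descent data. By construction both correspond under $\omega_{T'}$ to $\bar f$; the desired isomorphism $\pi^*g\simeq f$, together with its cocycle compatibility over $T''$, then follows from the full faithfulness of the Tannaka functor on $T'$ and $T''$.

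The main obstacle is exactly this last step: the stacks $T'$ and $T''$ are only assumed to be locally noetherian and not locally excellent, so the full equivalence of Theorem~\ref{T:main_tannaka:aff_stab} is not directly available for them. One must instead invoke the fully faithful half of a Tannaka theorem that holds in this weaker setting, most naturally the non-noetherian refinement Theorem~\ref{T:main_tannaka_non-noeth}. Alternatively, since the desired isomorphism is local on the base, one may pass to an affine (hence excellent) smooth cover of $T'$ and $T''$ and apply Theorem~\ref{T:main_tannaka:aff_stab} directly there.
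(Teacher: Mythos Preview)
Your overall strategy coincides with the paper's: compare $X(-)$ with the fpqc stack $\Hom_{c\otimes,\simeq}(\QCoh(X),\QCoh(-))$, use the equivalence over $T$ (locally excellent) to descend, and full faithfulness over $T'$ (locally noetherian) to match $\pi^*g$ with $f$. That part is fine, and Theorem~\ref{T:main_tannaka_non-noeth}\ref{ti:tannaka_faith:noemb/noeth}--\ref{ti:tannaka_full:core} indeed gives you what you need on $T'$.

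The gap is at $T''=T'\times_T T'$, where you need \emph{faithfulness} to check the cocycle condition. Neither of your suggested fixes works. First, $T''$ is typically \emph{not} locally noetherian (the fibre product of noetherian schemes along an fpqc map need not be noetherian), nor does it obviously lack embedded components, so Theorem~\ref{T:main_tannaka_non-noeth}\ref{ti:tannaka_faith:noemb/noeth} does not apply. Second, your alternative ``pass to an affine (hence excellent) smooth cover'' is mistaken on two counts: affine does not imply excellent (excellency is a condition on the ring, not automatic), and a smooth cover of a non-noetherian $T''$ is still non-noetherian.

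The paper closes this gap with Remark~\ref{R:faithfulness-for-flat-over-noetherian}: since the projections $T''\to T'$ are flat and $T'$ is locally noetherian, the separating filtration used in the proof of faithfulness can be pulled back from $T'$ to $T''$, where it remains separating by Lemma~\ref{L:separating}. This yields faithfulness of $\omega_X(T'')$ on the specific morphisms $T''\to T'\to X$ in play, which is exactly what the cocycle check requires. You should invoke this remark rather than the raw statement of Theorem~\ref{T:main_tannaka_non-noeth}.
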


\subsection*{Application to completions}
Another application concerns completions.
\begin{corollary}\label{C:application-completions}
Let $A$ be a noetherian ring and let $I\subseteq A$ be an ideal. Assume that $A$ is
complete with respect to the $I$-adic topology. Let $X$ be a noetherian algebraic stack
and consider the natural morphism
\[
X(A)\to \varprojlim X(A/I^n)
\]
of groupoids. This morphism is an equivalence if either
\begin{enumerate}
\item $X$ has affine stabilizers
  and $A$ is excellent (or merely a G-ring); or
\item\label{ci:completions:qg}
  $X$ has quasi-affine diagonal; or
\item\label{ci:completions:qf}
  $X$ has quasi-finite diagonal.
\end{enumerate}
\end{corollary}
Using derived methods,
Corollary~\ref{C:application-completions}\ref{ci:completions:qg} was
recently proved for non-noetherian complete
rings $A$ \cite{2015arXiv150701925B,2014arXiv1404.7483B}. That $X$ has affine stabilizers
in Corollary~\ref{C:application-completions} is necessary (see Theorem \ref{T:failure_completions}).

\subsection*{On the proof of Tannaka duality}
We will discuss the proof of Theorem \ref{T:main_tannaka_non-noeth}, the refinement of 
Theorem \ref{T:main_tannaka:aff_stab}. The reason for this is that it is much more 
convenient from a technical standpoint to consider the problem in the setting of 
quasi-coherent sheaves on potentially non-noetherian algebraic stacks. 

So let $T$ and $X$ be algebraic stacks and let $\QCoh(T)$ and $\QCoh(X)$ denote their respective abelian categories of quasi-coherent sheaves. We will assume that $X$ is quasi-compact and quasi-separated. Our principal concern is the properties of the functor
\begin{align*}
\omega_X(T)\colon  \Hom(T,X) &\to \Hom_{c\otimes}(\QCoh(X),\QCoh(T)),\\
  (f\colon T \to X) &\mapsto (f^* \colon \QCoh(X) \to \QCoh(T)),
\end{align*}
where the right hand side denotes the additive functors
$F\colon \QCoh(X) \to \QCoh(T)$ satisfying
\begin{enumerate}
\item $F(\sO_X)=\sO_T$,
\item $F$ preserves the tensor product, and
\item $F$ is right exact and preserve (small) direct sums.
\end{enumerate}
We call such $F$ \emph{cocontinuous tensor functors}.

An algebraic stack $X$ has the \emph{resolution property} if every quasi-coherent sheaf is 
a quotient of a direct sum of vector bundles. In 
Theorem~\ref{T:resprop-tensorial} we establish the equivalence of $\TF{X}(T)$ when $X$ has affine diagonal and the resolution
property. This result has appeared in various forms in the work of others (cf.\ Sch\"appi
\cite[Thm.~1.3.2]{2012arXiv1206.2764S}, Savin~\cite{MR2207852} and Brandenburg \cite[Cor.~5.7.12]{brandenburg_thesis}) and forms an essential stepping stone in the proof of our main theorem (Theorem \ref{T:main_tannaka_non-noeth}).

In general, there are stacks---even schemes---that do not have the resolution property. Indeed, if $X$ has the resolution property, then $X$ has at least affine diagonal \cite[Prop.~1.3]{MR2108211}. Our proof uses the following
three ideas to overcome this problem:
\begin{enumerate}
\item If $U\subseteq X$ is a quasi-compact open immersion and $\QCoh(X)\to
  \QCoh(T)$ is a tensor functor, then there is an induced tensor functor
  $\QCoh(U)\to \QCoh(V)$ where $V\subseteq T$ is the ``inverse image of $U$''.
  The proof of this is based on ideas of
  Brandenburg and Chirvasitu~\cite{MR3144607}. (Section~\ref{S:tensor-localization})
\item If $X$ is an infinitesimal neighborhood of a stack with the
  resolution property, then $\TF{X}(T)$ is an equivalence
  for all $T$. (Section~\ref{S:main-lemma})
\item There is a constructible stratification of $X$ into
  stacks with affine diagonal and the resolution property (Proposition~\ref{P:filtrations-exists}).
  We deduce the main
  theorem by induction on the number of strata using formal
  gluings~\cite{MR1432058,mayer-vietoris}. This step uses special cases of
  Corollaries~\ref{C:application-fpqc-stack} and
  \ref{C:application-completions}. (Sections~\ref{S:formal-gluings} and~\ref{S:filtrations})
\end{enumerate}
In the third step, we assume that our functors preserve sheaves of finite type.
\subsection*{Open questions}
Concerning (ii), it should be noted that we do not know the answers to the
following two questions.
\begin{question}\label{Q:deformation-of-resprop}
If $X_0$ has the resolution property and $X_0\inj X$ is a nilpotent closed
immersion, then does $X$ have the resolution property?
\end{question}
The question has an affirmative answer if $X_0$ is cohomologically affine,
e.g., $X_0=B_kG$ where $G$ is a linearly
reductive group scheme over $k$. The question is open if
$X_0=B_kG$ where $G$ is not linearly reductive, even
if $X=B_{k[\epsilon]}G_\epsilon$ where $G_\epsilon$ is a deformation of $G$
over the dual numbers~\cite{mathoverflow_groups-over-dual-numbers}.
\begin{question}
If $X_0\inj X$ is a nilpotent closed immersion and $\TF{X_0}(T)$ is
an equivalence, is then $\TF{X}(T)$ an equivalence?
\end{question}
Step (ii) answers neither of these questions but uses a special case of
the first question (Lemma~\ref{L:res_strange}) and
the conclusion (Main Lemma~\ref{L:funny_nil_res}) is a special case
of the second question.

The following technical question also arose in this research.
\begin{question}\label{Q:pseudo-geometric}
  Let $X$ be an algebraic stack with quasi-compact and quasi-separated
  diagonal and affine stabilizers. Let $k$ be
  a field. Is every morphism $\Spec k \to X$ affine?
\end{question}
If $X$ \'etale-locally has quasi-affine diagonal, then Question
\ref{Q:pseudo-geometric} has an affirmative answer (Lemma
\ref{L:fields-to-stacks}). This makes finding counterexamples
extraordinarily difficult and thus very interesting. This question
arose because if $\Spec k \to X$ is non-affine, then $\omega_X(\Spec
k)$ is not fully faithful (Theorem~\ref{T:failure_full}).
This explains our restriction to natural isomorphisms in Theorem
\ref{T:main_tannaka:aff_stab}. Note that every morphism $\Spec k \to X$
as in Question~\ref{Q:pseudo-geometric} is at least
quasi-affine~\cite[Thm.~B.2]{MR2774654}. We do not know
the answer to the question even if $X$ has separated diagonal and is of
finite type over a field.
\subsection*{On the applications}
Let $T$ be a noetherian and locally excellent algebraic stack and let $Z$ be a closed substack defined by a coherent ideal $J\subseteq \sO_T$.
Let $Z^{[n]}$ be the closed substack defined by $J^{n+1}$. 
Assume that the natural functor $\Coh(T)\to \varprojlim_n \Coh(Z^{[n]})$ is an
equivalence of categories. Then an immediate consequence of Tannaka
duality (Theorem~\ref{T:main_tannaka:aff_stab}) is that
\[
\Hom(T,X)\to \varprojlim \Hom(Z^{[n]},X)
\]
is an equivalence of categories for every noetherian algebraic stack $X$
with affine stabilizers.
This applies in particular
if $A$ is excellent and $I$-adically complete and
$T=\Spec A$ and $Z=\Spec A/I$; this gives
Corollary~\ref{C:application-completions}. More generally, it also
applies if $T$ is proper over $\Spec A$ and ${Z=T\times_{\Spec A}
\Spec A/I}$ (Grothendieck's existence theorem). This latter case is fed into Artin's criterion to prove Theorem~\ref{T:main_alg_hom} (the remaining hypotheses have largely been verified elsewhere). 

There are also non-proper
stacks $T$ satisfying $\Coh(T)\to \varprojlim_n \Coh(Z^{[n]})$, such
as global quotient stacks with proper good moduli spaces (see \cite{geraschenko-zb_fGAGA,MR2092127} for some special cases). This featured in the resolution of Alper's conjecture \cite{AHR_lunafield}.

Such statements, and their derived versions, were also recently considered by
Halpern-Leistner--Preygel~\cite{2014arXiv1402.3204H}. There, they
considered variants of our Theorem \ref{T:main_alg_hom}. For their
algebraicity results, their assumption was similar to assuming
that $\Coh(T) \to \varprojlim_n \Coh(Z^{[n]})$ was an equivalence
(though they also considered other derived versions), and
that $X \to S$ was locally of finite presentation with affine
diagonal. 
\subsection*{Relation to other work}
As mentioned in the beginning of the Introduction, Lurie identifies the
image of $\TF{X}(T)$ with the \emph{tame} functors when $X$ is quasi-compact with affine diagonal \cite{lurie_tannaka}. Tameness means that
faithful flatness of objects is preserved. This is a very strong assumption that
makes it possible to directly pull back a smooth presentation of $X$ to
a smooth covering of $T$ and deduce the result by descent.
Note that every tensor functor preserves coherent flat objects---these are
vector bundles and hence dualizable---but this does not imply that flatness of
quasi-coherent objects are preserved.
Lurie's methods
also work for non-noetherian $T$.

Brandenburg and Chirvasitu have
shown that $\TF{X}(T)$ is an equivalence for every
quasi-compact and quasi-separated scheme $X$~\cite{MR3144607}, also for
non-noetherian $T$. The key idea of their proof is the tensor localization
that we have adapted in Section~\ref{S:tensor-localization}. Using this
technique, we give a slightly simplified proof of their theorem in
Theorem~\ref{T:scheme-tensorial}. 

When $X$ has quasi-affine diagonal, derived variants of Theorem 
\ref{T:main_tannaka:aff_stab} have recently been considered by various authors 
\cite{MR3048606,2014arXiv1404.7483B,2015arXiv150701925B}. Specifically, they were 
concerned with symmetric monoidal $\infty$-functors $G \colon D(X) \to D(T)$ between stable $\infty$-categories 
of quasi-coherent sheaves.

It is not obvious how to go from a tensor functor $F \colon
\QCoh(X) \to \QCoh(T)$ to a symmetric monoidal $\infty$-functor $\LDERF F
\colon D(X) \to D(T)$, so our results cannot be deduced from the derived
perspective. When $T$ is locally noetherian, however, our result is stronger
than~\cite[Thm.~1.4]{2015arXiv150701925B}. Indeed, the functors $G \colon D(X) \to D(T)$
are assumed to preserve derived tensor products, connective complexes (i.e.,
are right $t$-exact) and pseudo-coherent complexes and hence induces a
right-exact tensor functor $H^0(G) \colon \QCoh(X)
\to \QCoh(T)$ preserving sheaves of finite type.
When $X$ has finite stabilizers, the right $t$-exactness is
sometimes automatic~\cite{MR3048606,2014arXiv1404.7483B,16334}.

We do not address the Tannaka \emph{recognition} problem, i.e., which symmetric
monoidal categories arise as the category of quasi-coherent sheaves on an
algebraic stack. For gerbes, this has been done in characteristic zero by
Deligne~\cite[Thm.~7.1]{deligne_categories-tannakiennes}. For stacks with the
resolution property, this has been done by
Sch\"appi~\cite[Thm.~1.4]{schappi_which_geometric},~\cite[Thms.~1.2.2,
  5.3.10]{2015arXiv150504596S}. Similar results from the derived perspective
have been considered by Wallbridge \cite{2012arXiv1204.5787W} and Iwanari
\cite{2014arXiv1409.3321I}.

\subsubsection*{Acknowledgments}
We would like to thank Dan Abramovich for asking us whether Theorem \ref{T:main_alg_hom} held, which was the original motivation for this article.
We would also like to thank Martin Brandenburg for his many comments on a
preliminary draft and for sharing his dissertation with us. In particular, he
made us aware of Proposition~\ref{P:char_aff_brandenburg}, which greatly
simplified Corollary~\ref{C:pb_affine_stack}.
Finally, we would like to thank Bhargav Bhatt for several useful comments and
suggestions, Jarod Alper for several interesting and supportive discussions,
and Andrew Kresch for some encouraging remarks.

\section{Symmetric monoidal categories}
  A \emph{symmetric monoidal category} is the data of a category
  $\cC$, a tensor product $\otimes_{\cC} \colon \cC \times \cC \to
  \cC$, and a unit $\sO_{\cC}$ that together satisfy various
  naturality, commutativity, and associativity properties
  \cite[VII.7]{MR1712872}.
  A symmetric monoidal category $\cC$ is \emph{closed} if for any
  $M\in \cC$ the functor $-\otimes_{\cC} M \colon \cC \to \cC$ admits
  a right adjoint, which we denote as $\sHom_{\cC}(M,-)$.
  \begin{example}
    Let $A$ be a ring; then the category of $A$-modules, $\Mod(A)$, together with its 
    tensor product $\otimes_A$, is a symmetric monoidal category with unit $A$. In fact, 
    $\Mod(A)$ is even closed: the right adjoint to $-\otimes_A M$ is the $A$-module 
    $\Hom_A(M,-)$. If $A$ is 
    noetherian, then the subcategory of finite $A$-modules, $\Coh(A)$, is also 
    a closed symmetric monoidal category. 
  \end{example}  
  A functor $F\colon \cC \to \cD$ between symmetric monoidal
  categories is \emph{lax symmetric monoidal} if for each $M$ and $M'$ of $\cC$
  there are natural maps $F(M) \otimes_{\cD} F(M') \to
  F(M\otimes_{\cC} M')$ and $\sO_{\cD} \to
  F(\sO_{\cC})$ that are compatible with the symmetric monoidal structure. If
  these maps are both isomorphisms, then $F$ is \emph{symmetric monoidal}. Note
  that if $F \colon \cC \to \cD$ is a symmetric monoidal functor, then a right
  adjoint $G \colon \cD \to \cC$ to $F$ is always lax symmetric monoidal.
  \begin{example}
    Let $\phi\colon A \to B$ be a ring homomorphism. The functor $-\otimes_A B \colon 
    \Mod(A) \to \Mod(B)$ is symmetric monoidal. It admits a right adjoint, $\Mod(B) \to 
    \Mod(A)$, which is given by the forgetful functor. This forgetful functor is lax monoidal, 
    but not monoidal.
  \end{example}
  If $\cC$ is a symmetric monoidal category, then a \emph{commutative
    $\cC$-algebra} consists of an object $A$ of $\cC$ together with a
  multiplication $m\colon A \otimes_{\cC} A \to A$ and a unit $e_A
  \colon \sO_{\cC} \to A$ with the expected properties
  \cite[VII.3]{MR1712872}. Let $\CMon(\cC)$ denote the category of commutative $\cC$-algebras. The category $\CMon(\cC)$ is naturally endowed with a symmetric monoidal structure that makes the forgetful functor $\CMon(\cC) \to \cC$ symmetric monoidal.  
  \begin{example}
    If $A$ is a ring, then $\CMon(\Mod(A))$ is the category of commutative $A$-algebras.
  \end{example}
  The following observation will be used frequently: if $F \colon \cC \to \cD$ is a lax symmetric monoidal
  functor and $A$ is a commutative $\cC$-algebra, then $F(A)$ is a
  commutative $\cD$-algebra.

\section{Abelian tensor categories}
An \emph{abelian tensor category} is a symmetric
monoidal category that is abelian and the tensor product is right exact and preserves finite direct sums in each variable (i.e., preserves all finite colimits in each variable). 

Recall that an abelian category is \emph{Grothendieck} if it is closed under small direct 
sums, filtered colimits are exact, and it has a generator \cite[Tag \spref{079A}]{stacks-project}. Also, recall that a functor $F \colon \cC \to \cD$ between two Grothendieck abelian categories is \emph{cocontinuous} if it is right-exact and preserves small direct sums, equivalently, it preserves all small colimits.

A \emph{Grothendieck abelian tensor category}
is an abelian tensor category such that the underlying abelian category
is Grothendieck abelian and the tensor product is cocontinuous in each variable. By the Special Adjoint Functor Theorem
\cite[Prop.~8.3.27(iii)]{MR2182076}, if $\cC$ is a Grothendieck abelian
tensor
category, then it is also closed.
\begin{example}
  Let $A$ be a ring. Then $\Mod(A)$ is a Grothendieck abelian tensor category. If $A$ is 
  noetherian, then $\Coh(A)$ is an abelian tensor category but not Grothendieck 
  abelian---it is not closed under small direct sums.
\end{example}

A \emph{tensor functor} $F\colon \cC \to \cD$ is an additive
symmetric monoidal functor between abelian tensor categories. Let
$\GATC$ be the $2$-category of Grothendieck abelian tensor categories
and cocontinuous
tensor functors. By the Special Adjoint Functor Theorem, if $F\colon
\cC \to \cD$ is a cocontinuous tensor functor, then $F$ admits a (lax symmetric
monoidal) right adjoint.
\begin{example}
  Let $T$ be a ringed site. The category of $\sO_T$-modules $\Mod(T)$
  is a Grothendieck
  abelian tensor category with unit $\sO_T$ and the internal Hom is the
  functor $\sHom_{\sO_T}(M,-)$. 
\end{example}
\begin{example}\label{E:ab_tens_cat_alg_stacks}
  Let $X$ be an algebraic stack. The category of quasi-coherent
  sheaves $\QCoh(X)$ is a Grothendieck abelian tensor category with
  unit $\sO_X$
  \cite[Tag \spref{0781}]{stacks-project}. The internal $\Hom$ is
  $\QC(\sHom_{\sO_X}(M,-))$, where $\QC$ denotes the
  quasi-coherator (the right adjoint to the inclusion of the category
  of quasi-coherent sheaves in the category of lisse-\'etale
  $\sO_X$-modules). If $X$ is an algebraic stack, then
  $\CMon(\QCoh(X))$ is the symmetric monoidal category of
  quasi-coherent $\sO_{X}$-algebras.

  If $\map{f}{X}{Y}$ is a morphism of algebraic stacks, then the
  resulting functor $\map{f^*}{\QCoh(Y)}{\QCoh(X)}$ is a cocontinuous
  tensor functor. If $f$ is flat, then $f^*$ is exact. We always denote the right adjoint of $f^*$ by $\map{f_*}{\QCoh(X)}{\QCoh(Y)}$. If $f$ is quasi-compact and quasi-separated, then $f_*$ coincides with the pushforward of lisse-\'etale $\sO_X$-modules \cite[Lem.~6.5(i)]{MR2312554}.  In particular, if $f$ is
  quasi-compact and quasi-separated, then $f_* \colon \QCoh(X) \to
  \QCoh(Y)$ preserves directed colimits (work smooth-locally on $Y$ and then apply \cite[Tag~\spref{0738}]{stacks-project}) and is lax symmetric monoidal.
\end{example}
\begin{definition}
Given abelian tensor categories $\cC$ and
$\cD$, we let $\Hom_{c\otimes}(\cC,\cD)$ (resp.~$\Hom_{r\otimes}(\cC,\cD)$) denote the
category of cocontinuous (resp.~right exact) tensor functors and natural
transformations. The transformations are required to be natural with respect
to both homomorphisms and the symmetric monoidal structure.
We let $\Hom_{c\otimes,\simeq}(\cC,\cD)$ (resp.~$\Hom_{r\otimes,\simeq}(\cC,\cD)$) denote the groupoid of cocontinuous (resp.~right exact) tensor functors and
natural isomorphisms.
\end{definition}

We conclude this section with some useful facts for the paper. We first consider modules over algebras,
which are addressed, for example, in Brandenburg's
thesis~\cite[\S5.3]{brandenburg_thesis} in even greater generality.

\subsection{Modules over an algebra in tensor categories}\label{SS:modules}
Let $\cC$ be a Grothen\-dieck
abelian tensor category and let $A$ be a commutative
$\cC$-algebra. Define $\Mod_{\cC}(A)$ to be the category of
$A$-modules. Objects are pairs $(M,a)$, where $M\in \cC$
and $a \colon A\otimes_{{\cC}} M \to M$ is an action of $A$ on $M$.
Morphisms $\phi \colon (M,a) \to (M',a')$ in $\Mod_{\cC}(A)$ are those
$\cC$-morphisms $\phi \colon M \to M'$ that preserve the respective
actions. We identify $A$ with $(A,m)\in \Mod_{\cC}(A)$ where $m\colon
A \otimes_{\cC} A \to A$ is the multiplication. It is straightforward
to show that $\Mod_{\cC}(A)$ is a Grothendieck abelian tensor
category, with tensor product $\otimes_A$ and unit $A$, and the
natural forgetful functor $\Mod_{\cC}(A) \to \cC$ preserves all limits
and colimits \cite[\S4.3]{MR2182076}.

  If $s \colon A \to B$ is a $\cC$-algebra homomorphism, 
  then there is a natural cocontinuous tensor functor
  \[
  s^* \colon \Mod_{\cC}(A) \to \Mod_{\cC}(B),\quad
  (M,a) \mapsto (B\otimes_A M,B\otimes_A a). 
  \]
  Suppose $f^* \colon \cC \to \cD$ is a cocontinuous tensor functor with
  right adjoint $f_* \colon \cD \to \cC$. If
  $A$ is a commutative $\cC$-algebra, then there is a natural induced
  cocontinuous tensor functor
  \[
  f^*_A \colon \Mod_{\cC}(A) \to \Mod_{\cD}(f^*A),\quad
  (M,a) \mapsto (f^*M,f^*a).
  \]
  Noting that $\epsilon\colon f^*f_*\sO_{\cD} \to \sO_{\cD}$ is a $\cD$-algebra homomorphism, there is a natural induced cocontinuous tensor functor
  \[
  \bar{f}^* \colon \Mod_{\cC}(f_*\sO_{\cD}) \xrightarrow{f^*_{f_*\sO_{\cD}}}
  \Mod_{\cD}(f^*f_*\sO_{\cD})\xrightarrow{\epsilon^*} \Mod_{\cD}(\sO_{\cD}) = \cD.
  \]
  Moreover, if we let $\eta\colon \sO_{\cC}\to
  f_*f^*\sO_{\cC}=f_*\sO_{\cD}$ denote the unit, then $f^*=\bar{f}^*
  \eta^*$. We have the following striking
  characterization of module categories \cite[Prop.~5.3.1]{brandenburg_thesis}.
  \begin{proposition}\label{P:char_aff_brandenburg}
    Let $\cC$ be a Grothendieck abelian tensor category and let $A$ be
    a commutative algebra in $\cC$. Then for every Grothendieck
    abelian tensor category $\cD$, there is an equivalence of
    categories
    \begin{align*}
    \Hom_{c\otimes}(\Mod_{\cC}(A),\cD) \simeq \{(F,h) \colon &F \in
    \Hom_{c\otimes}(\cC,\cD), \\
    &h \in
    \Hom_{\CMon(\cD)}(F(A),\sO_{\cD})\},
    \end{align*}
    where a morphism $(F,h)\to (F',h')$ is a natural transformation
    $\alpha\colon F\to F'$ such that $h=h'\circ \alpha(A)$.
  \end{proposition}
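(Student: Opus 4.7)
The plan is to exhibit explicit inverse functors $\Phi$ and $\Psi$ between the two categories. Given $(F,h)$ on the right-hand side, set $\Phi(F,h) := h^* \circ F_A$, where $F_A \colon \Mod_{\cC}(A) \to \Mod_{\cD}(F(A))$ and $h^* \colon \Mod_{\cD}(F(A)) \to \Mod_{\cD}(\sO_{\cD}) = \cD$ are the base-change functors built in the discussion preceding the proposition. Both are cocontinuous tensor functors, so the composition is as well. Conversely, given $G \colon \Mod_{\cC}(A) \to \cD$, write $\iota \colon \cC \to \Mod_{\cC}(A)$ for the free $A$-module functor $N \mapsto A \otimes N$ (with action by multiplication on the first factor). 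Since $\iota(N_1) \otimes_A \iota(N_2) \cong \iota(N_1 \otimes N_2)$ and $\iota(\sO_{\cC}) = A = \sO_{\Mod_{\cC}(A)}$, the functor $\iota$ is cocontinuous and strong monoidal. Define $\Psi(G) := (F,h)$, where $F := G \circ \iota$ and $h \colon F(A) \to \sO_{\cD}$ is $G$ applied to the multiplication $m \colon \iota(A) \to A$ viewed as a morphism in $\Mod_{\cC}(A)$; that $h \in \Hom_{\CMon(\cD)}(F(A),\sO_{\cD})$ follows because $m$ is a morphism of commutative algebras in $\Mod_{\cC}(A)$ and $G$ is symmetric monoidal.

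The identity $\Psi \circ \Phi \simeq \mathrm{id}$ is a direct unfolding: for $N \in \cC$ the object $\iota(N)$ is a free $A$-module, so $F_A(\iota(N)) \cong F(A) \otimes F(N)$ with the free $F(A)$-action, and $h^*$ collapses this to $\sO_{\cD} \otimes_{F(A)} (F(A) \otimes F(N)) \cong F(N)$ naturally in $N$; tracking the multiplication $m$ through the construction recovers $h$.

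The core of the argument is $\Phi \circ \Psi \simeq \mathrm{id}$, and here the tool is the \emph{bar resolution}: for every $(M,a) \in \Mod_{\cC}(A)$ the canonical diagram
\[
\iota(A \otimes M) \;\rightrightarrows\; \iota(M) \;\to\; M,
\]
with arrows $m \otimes \mathrm{id}_M$ (the counit of $\iota \dashv U$ at $\iota(M)$) and $\iota(a) = \mathrm{id}_A \otimes a$, is a split (hence absolute) coequalizer after forgetting to $\cC$, so it is preserved by $G$. We obtain a coequalizer
\[
F(A) \otimes F(M) \;\rightrightarrows\; F(M) \;\to\; G(M)
\]
in $\cD$. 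The second arrow is visibly $F(a)$. The crucial identification is that the first arrow equals $h \otimes \mathrm{id}_{F(M)}$: rewriting $m \otimes \mathrm{id}_M$ via the strong monoidality isomorphism $\iota(A) \otimes_A \iota(M) \cong \iota(A \otimes M)$ presents it as $m \otimes_A \mathrm{id}_{\iota(M)}$ inside $\Mod_{\cC}(A)$, and the tensoriality of $G$ then gives $G(m) \otimes_{\sO_{\cD}} \mathrm{id}_{F(M)} = h \otimes \mathrm{id}_{F(M)}$. Since $\sO_{\cD} \otimes_{F(A)} F(M)$ is by definition the coequalizer of $h \otimes \mathrm{id}$ and $F(a)$, we conclude $G(M,a) \cong \Phi(F,h)(M,a) = \Phi(\Psi(G))(M,a)$, naturally in $(M,a)$.

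Naturality of $\Phi$ and $\Psi$ on morphisms is a routine check. The principal obstacle is the re-interpretation of the bar-resolution counit $m \otimes \mathrm{id}_M$ as the $\otimes_A$-tensor $m \otimes_A \mathrm{id}_{\iota(M)}$ inside $\Mod_{\cC}(A)$, which is what allows the tensor property of $G$ to be invoked; once that identification is in place, the remainder of the comparison amounts to recognizing two standard coequalizer presentations as the same.
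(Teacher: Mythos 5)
Your proof is correct. Note that the paper does not prove this proposition at all: it is quoted verbatim from Brandenburg's thesis (Prop.~5.3.1), so there is no in-paper argument to compare against; your construction (free-module functor $\iota$, base change $h^*\circ F_A$, and the canonical split presentation $\iota(A\otimes M)\rightrightarrows\iota(M)\to M$ identified with the coequalizer defining $\sO_{\cD}\otimes_{F(A)}F_A(M)$) is essentially the standard argument given in that reference. The only point worth tightening is the justification that $G$ preserves the bar coequalizer: the splitting after forgetting to $\cC$ shows the diagram is a coequalizer in $\Mod_{\cC}(A)$ (the forgetful functor creates colimits), and then cocontinuity of $G$ — not absoluteness in $\cC$ — is what guarantees preservation.
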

The following corollary is immediate (see
\cite[Cor.~5.3.7]{brandenburg_thesis}).
  \begin{corollary}\label{C:pb_affine_stack}
    Let $p\colon Y' \to Y$ be an affine morphism of algebraic
    stacks. Let $X$ be an algebraic stack and let $g^*\colon\QCoh(Y)
    \to \QCoh(X)$ be a cocontinuous tensor functor. If $X'$ is the
    affine $X$-scheme $\sSpec_{X}(g^*p_*\sO_{Y'})$
    with structure morphism $p'\colon X' \to X$, then there is a
    $2$-cocartesian diagram in $\GATC$:
    \[
    \xymatrix{\QCoh(X') & \QCoh(Y') \ar[l]_{g'^*} \\
      \QCoh(X) \ar[u]^{p'^*} & \QCoh(Y)\ar[u]_{p^*} \ar[l]_{g^*}.}
    \]
    Moreover, the natural transformation $g^*p_*\Rightarrow p'_*g'^*$ is an
    isomorphism.
  \end{corollary}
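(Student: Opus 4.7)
The strategy is to derive the corollary directly from Proposition \ref{P:char_aff_brandenburg} by recognizing both $\QCoh(Y')$ and $\QCoh(X')$ as module categories over appropriate algebras, and then matching universal properties.

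First I would set up the identifications. Since $p \colon Y' \to Y$ is affine, $A := p_*\sO_{Y'}$ is a commutative algebra in $\QCoh(Y)$, and there is a canonical equivalence $\QCoh(Y') \simeq \Mod_{\QCoh(Y)}(A)$ under which $p^*$ becomes the extension-of-scalars functor $M \mapsto A\otimes_{\sO_Y} M$ and $p_*$ becomes the forgetful functor. Since $g^* \colon \QCoh(Y)\to \QCoh(X)$ is a tensor functor, $g^*A$ is a commutative algebra in $\QCoh(X)$, so $X' = \sSpec_X(g^*A)$ is affine over $X$ and $\QCoh(X') \simeq \Mod_{\QCoh(X)}(g^*A)$, with $p'^*$ and $p'_*$ again the extension-of-scalars and forgetful functors respectively. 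The functor $g'^* \colon \QCoh(Y') \to \QCoh(X')$ is, in module-theoretic terms, precisely the functor $g^*_A$ from the discussion preceding the proposition, sending an $A$-module $(M,a)$ to the $g^*A$-module $(g^*M, g^*a)$; the required commutativity $g'^* p^* \simeq p'^* g^*$ on the underlying tensor functors is then immediate from the isomorphism $g^*(A\otimes_{\sO_Y} M) \simeq g^*A \otimes_{\sO_X} g^*M$.

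Next I would verify the 2-cocartesian property in $\GATC$: for an arbitrary Grothendieck abelian tensor category $\cD$, the natural functor
\[
\Hom_{c\otimes}(\QCoh(X'),\cD) \longrightarrow \Hom_{c\otimes}(\QCoh(Y'),\cD) \times_{\Hom_{c\otimes}(\QCoh(Y),\cD)} \Hom_{c\otimes}(\QCoh(X),\cD)
\]
is an equivalence. By Proposition \ref{P:char_aff_brandenburg} applied to $(\QCoh(X), g^*A)$, the source is equivalent to the category of pairs $(H, k)$ where $H\in \Hom_{c\otimes}(\QCoh(X),\cD)$ and $k \colon H(g^*A)\to \sO_\cD$ is a $\cD$-algebra map. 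Applying the same proposition to $(\QCoh(Y), A)$, the target is equivalent to the category of triples $(H, F, h, \alpha)$ where $H\in \Hom_{c\otimes}(\QCoh(X),\cD)$, $F \in \Hom_{c\otimes}(\QCoh(Y),\cD)$, $h \colon F(A)\to \sO_\cD$ is a $\cD$-algebra map, and $\alpha \colon F \xrightarrow{\sim} H\circ g^*$ is a natural isomorphism of cocontinuous tensor functors. Using $\alpha$ to transport the datum $h$ to $\tilde h \colon H(g^*A)\to \sO_\cD$, the projection $(H,F,h,\alpha)\mapsto (H,\tilde h)$ is an equivalence of categories onto the source, proving the universal property.

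Finally, for the base change isomorphism $g^*p_* \Rightarrow p'_*g'^*$, I would simply unwind the module-theoretic descriptions. Under $\QCoh(Y') \simeq \Mod_{\QCoh(Y)}(A)$ and $\QCoh(X')\simeq \Mod_{\QCoh(X)}(g^*A)$, both $p_*$ and $p'_*$ are the forgetful functors, while $g'^*$ is given by $(M,a)\mapsto (g^*M, g^*a)$. Thus $p'_*g'^*(M,a) = g^*M = g^*p_*(M,a)$, functorially; verifying that this equality is the natural transformation $g^*p_* \Rightarrow p'_*g'^*$ arising from the adjunctions is routine.

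The main obstacle I anticipate is bookkeeping: namely, checking carefully that the equivalence constructed via Proposition \ref{P:char_aff_brandenburg} is compatible with precomposition by $g^*$ (equivalently, that the algebra datum $h \colon F(A)\to \sO_\cD$ obtained from a cocontinuous tensor functor $\QCoh(Y')\to \cD$ transports correctly along $\alpha \colon F \simeq H\circ g^*$). Once this naturality is confirmed, the 2-cocartesian property and the base change isomorphism both follow formally.
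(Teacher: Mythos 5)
Your proposal is correct and follows exactly the route the paper intends: the paper declares the corollary ``immediate'' from Proposition~\ref{P:char_aff_brandenburg} (citing Brandenburg's thesis), and your argument is precisely the elaboration of that derivation, identifying $\QCoh(Y')$ and $\QCoh(X')$ as module categories and matching the universal properties. The base-change isomorphism likewise unwinds to the identity of forgetful functors as you describe, so nothing is missing.
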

  Note that if $g^*$ comes from a morphism $g\colon X \to Y$,
  then $X'\cong X\times_Y Y'$.

\subsection{Inverse limits of abelian tensor categories}
  We will now briefly discuss some inverse limits that will be crucial
  when we apply Tannaka duality to establish the algebraicity of Hom-stacks in
  Theorem~\ref{T:main_alg_hom}. The following notation will be useful.  
  \begin{notation}
    Let $i\colon Z \to X$ be a closed immersion of algebraic stacks defined by a
    quasi-coherent ideal $I$. For each integer $n\geq 0$, we let $i^{[n]} \colon Z^{[n]} \to 
    X$ denote the closed immersion defined by the quasi-coherent ideal $I^{n+1}$, which 
    we call the \emph{$n$th infinitesimal neighborhood of $Z$.}
  \end{notation}
  Let $X$ be a noetherian algebraic stack and let $i\colon Z \to X$ be a closed immersion. 
  Let $\Coh(X,Z)$ denote the category $\varprojlim_n \Coh(Z^{[n]})$. The arguments of \cite[Tag~\spref{087X}]{stacks-project} easily extend to establish the following:
  \begin{enumerate}
  \item $\Coh(X,Z)$ is an abelian tensor category with
    unit $\{\sO_{Z^{[n]}}\}$ and tensor product
    $\{M_n\}_{n\geq 0}\otimes \{N_n\}_{n\geq 0}
    =\{M_n \otimes_{\sO_{Z^{[n]}}} N_n\}_{n\geq 0}$;
  \item if $p\colon U \to X$ is smooth and quasi-compact, then the restriction $\Coh(X,Z) \to
    \Coh(U,U\times_X Z)$ is an exact tensor functor; and
  \item exactness in $\Coh(X,Z)$ may be checked on a smooth covering of $X$. 
  \end{enumerate}
  If $\{f_n \colon M_n \to N_n\}_{n\geq 0}$ is a morphism in $\Coh(X,Z)$, then it is easily determined that $\coker( \{f_n\}_{n\geq 0} ) \cong \{\coker f_n\}_{n\geq 0}$. Computing $\ker(\{f_n\}_{n\geq 0})$ is more involved. We will need the following lemma.
\begin{lemma}\label{L:re_coh}
  $\Coh(X,Z)$ is the limit of the inverse system of categories $\{\Coh(Z^{[n]})\}_{n\geq 0}$ in the $2$-category of abelian tensor categories with right exact tensor functors and natural isomorphisms of tensor functors. 
\end{lemma}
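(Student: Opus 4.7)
The plan is to verify the 2-universal property of $\Coh(X,I)$ as a limit. Concretely, for every abelian tensor category $\mathcal{E}$, I want to show that the natural functor of groupoids
\[
\Hom_{r\otimes,\simeq}\bigl(\mathcal{E},\Coh(X,I)\bigr) \;\longrightarrow\; \varprojlim_n \Hom_{r\otimes,\simeq}\bigl(\mathcal{E},\Coh(Z_n)\bigr),
\]
induced by the restriction functors $r_n\colon \Coh(X,I)\to\Coh(Z_n)$, is an equivalence. First I would check that each $r_n$ is itself a right exact tensor functor: cokernels in $\Coh(X,I)$ are levelwise, as recorded immediately before the lemma, and the tensor product in $\Coh(X,I)$ is $\{M_n\}\otimes\{N_n\}=\{M_n\otimes_{\sO_{Z_n}} N_n\}$, which is levelwise and well-defined because pullback commutes with tensor. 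Hence the $r_n$ genuinely assemble into a cone in the core of our 2-category.

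For essential surjectivity, given an object $(F_n,\alpha_n)$ of the right-hand side---so $F_n\colon\mathcal{E}\to\Coh(Z_n)$ is a right exact tensor functor and $\alpha_n\colon i_n^* F_{n+1}\xrightarrow{\sim}F_n$ is a tensor natural isomorphism, where $i_n\colon Z_n\hookrightarrow Z_{n+1}$ is the inclusion---I would define $F\colon \mathcal{E}\to\Coh(X,I)$ by $F(M) := (F_n(M))_{n\ge 0}$, with transition isomorphisms supplied by $\alpha_n(M)$. This is well-defined as an object of $\varprojlim_n \Coh(Z_n)=\Coh(X,I)$, and functoriality is inherited from each $F_n$. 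The tensor structure maps of the $F_n$ assemble to a tensor structure map on $F$ because tensor products in $\Coh(X,I)$ are levelwise, and the coherence axioms descend from those of the $F_n$. Right exactness of $F$ is then immediate: given $A\to B\to C\to 0$ exact in $\mathcal{E}$, each $F_n(A)\to F_n(B)\to F_n(C)\to 0$ is exact in $\Coh(Z_n)$, and the levelwise description of cokernels yields exactness of $F(A)\to F(B)\to F(C)\to 0$ in $\Coh(X,I)$. By construction, the tautological isomorphisms $r_n F\cong F_n$ intertwine with the $\alpha_n$, so $F$ lifts the given cone.

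For fully faithfulness on isomorphisms, if $F,F'\colon \mathcal{E}\to \Coh(X,I)$ are right exact tensor functors and $\beta_n\colon r_n F\xrightarrow{\sim} r_n F'$ is a compatible family of tensor natural isomorphisms, then the components $\beta_n(M)$ assemble to a unique tensor natural isomorphism $F(M)\xrightarrow{\sim}F'(M)$ in $\Coh(X,I)$, using only the levelwise description of objects and morphisms. The main obstacle, such as it is, is purely notational bookkeeping---verifying that the assembled tensor structures and natural isomorphisms respect the compatibilities in $\varprojlim_n \Coh(Z_n)$---and the argument rests entirely on the fact that cokernels and tensor products in $\Coh(X,I)$ are computed levelwise. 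Since kernels are not (as the authors remark), the analogous identification in the 2-category of \emph{exact} tensor functors would require extra care and is not addressed by this direct route.
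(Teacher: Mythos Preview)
Your proposal is correct and follows essentially the same approach as the paper. The paper's proof is a single sentence noting that the only nontrivial point is right exactness of the induced functor $\varprojlim_n F_n$, which follows from the levelwise description of cokernels; you have simply unpacked this in more detail, including the routine verifications (tensor structure, essential surjectivity, full faithfulness on isomorphisms) that the paper leaves implicit.
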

\begin{proof}
  It remains to verify that if $F_n \colon \cC \to \Coh(Z^{[n]})$ is a compatible sequence of 
right exact abelian tensor functors, then the induced abelian tensor functor $\varprojlim_n 
F_n \colon \cC \to \Coh(X,Z)$ is right exact. The explicit description of cokernels in 
$\Coh(X,Z)$ shows that this is the case. 
\end{proof}

\section{Tensorial algebraic stacks}\label{S:tensorial}
Let $T$ and $X$ be algebraic stacks. There is an induced
functor
\[
\map{\TF{X}(T)}{\Hom(T,X)}{\Hom_{c\otimes}\bigl(\QCoh(X),\QCoh(T)\bigr)}
\]
that takes a morphism $f$ to $f^*$. We also let  $\Hom_{c\otimes}^\ft\bigl(\QCoh(X),\QCoh(T)\bigr)$ denote the
full subcategory of functors that preserve sheaves of finite type. Similarly, we let 
$\Hom_{c\otimes,\simeq}\bigl(\QCoh(X),\QCoh(T)\bigr)$ denote the subcategory of
natural isomorphisms of functors. Clearly, $\omega_X(T)$ factors through all of these subcategories and 
we let $\TFiso{X}(T)$, $\TF{X}^\ft(T)$ and $\TFiso{X}^\ft(T)$ denote the respective 
factorizations. Note that when $X$ and $T$ are locally noetherian, the natural functor:
\[
\Hom_{r\otimes}\bigl(\Coh(X),\Coh(T)\bigr)\to \Hom_{c\otimes}^\ft\bigl(\QCoh(X),\QCoh(T)\bigr)
\]
is an equivalence of categories. Thus, Theorem~\ref{T:main_tannaka:aff_stab}
says that $\TFiso{X}^\ft(T)$ is an equivalence.

Since $\QCoh(-)$ is a stack in the fpqc topology,
the target categories of the functors $\TF{X}$, $\TFiso{X}$, $\TF{X}^{\ft}$ and $\TFiso{X}^{\ft}$ are stacks in the fpqc topology when
varying $T$---for an elaborate proof of this,
see~\cite[Thm.~1.1]{2012arXiv1206.0076L}. The source categories $\Hom(T,X)$
are groupoids and, when varying $T$, form a stack for the fppf topology in
general and for
the fpqc topology when $X$ has quasi-affine
diagonal~\cite[Cor.~10.7]{MR1771927}.
\begin{definition}
  Let $T$ and $X$ be algebraic stacks. We say that a tensor functor $f^* \colon
  \QCoh(X) \to \QCoh(T)$ is
  \emph{algebraic} if it arises from a morphism of algebraic
  stacks $f\colon T \to X$. If $f,g\colon T\to X$ are morphisms, then
  a natural transformation $\tau\colon f^*\Rightarrow g^*$ of tensor functors
  is \emph{realizable} if it is induced by a $2$-morphism $f\Rightarrow g$.
  We say that $X$ is \emph{tensorial} if
  $\TF{X}(T)$ is an equivalence for every algebraic stack
  $T$, or equivalently, for every affine
  scheme $T$~\cite[Def.~3.4.4]{brandenburg_thesis}.
\end{definition}
We begin with a descent lemma.
\begin{lemma}\label{L:fppf_local_tannakian}
  Let $X$ be an algebraic stack. Let $p\colon T' \to T$ be a morphism 
  of algebraic stacks that is covering for the fpqc topology.
  Let $T'' = T'\times_T T'$ and 
  $T'''=T'\times_T T'\times_T T'$. Assume that $p$ is a morphism of effective
  descent for $X$ (e.g., $p$ is flat and locally of
  finite presentation). 
  \begin{enumerate}
  \item \label{LI:fppf_local_tannakian:faith}
    Let $f_1,f_2\colon T\to X$ be morphisms and let $\tau,\tau'\colon
    f_1\Rightarrow f_2$ be $2$-morphisms. If $p^*\tau=p^*\tau'\colon f_1\circ
    p\Rightarrow f_2\circ p$ then $\tau=\tau'$.
  \item \label{LI:fppf_local_tannakian:ff}
    Let $f_1,f_2\colon T\to X$ be morphisms and let $\gamma\colon
    f_1^*\Rightarrow f_2^*$ be a natural transformation. If $p^*\gamma\colon
    p^*f_1^*\Rightarrow p^*f_2^*$ is realizable and $\TF{X}(T'')$ is faithful,
    then $\gamma$ is realizable.
  \item \label{LI:fppf_local_tannakian:alg} Let $f^* \colon \QCoh(X) \to \QCoh(T)$ be a cocontinuous tensor functor. If 
    $p^*f^*$ is algebraic, $\TFiso{X}(T'')$ is fully faithful and $\TF{X}(T''')$ is faithful, then 
    $f^*$ is algebraic.
  \end{enumerate}
  Let $\omega\in\{\TF{X},\TFiso{X},\TF{X}^\ft,\TFiso{X}^\ft\}$
  and $P\in \{\text{all}, \text{locally
    noetherian}, \text{locally excellent}\}$. If $\omega(T)$ is faithful
  (resp.\ fully faithful, resp.\ an equivalence) for every affine scheme $T$
  with property $P$, then $\omega(T)$ is faithful (resp.\ fully faithful,
  resp.\ an equivalence) for every algebraic stack $T$ with property $P$.
\end{lemma}
\begin{proof}
  It is sufficient to observe that $\Hom(-,X)$ is a stack in groupoids for the covering $p$
  and $\Hom_{c\otimes}(\QCoh(X),\QCoh(-))$
  is an fpqc stack in categories, so the result boils down to a 
  straightforward and general result for a $1$-morphism of such stacks.
\end{proof}
We next recall two basic lemmas on tensorial stacks. The
first is the combination of~\cite[Cor.~5.3.4 \&~5.6.4]{brandenburg_thesis}.
\begin{lemma}\label{L:qaff-tensorial:rel}
  Let $q\colon X' \to X$ be a quasi-affine morphism of algebraic stacks.
  If $T$ is an algebraic stack and $\TF{X}(T)$ is faithful, fully
  faithful or an equivalence; then so is $\TF{X'}(T)$. In
  particular, if $X$ is tensorial, then so is $X'$.
\end{lemma}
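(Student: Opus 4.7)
The plan is to factor $q$ through its relative affine hull as $X' \xhookrightarrow{j} \bar X' \xrightarrow{p} X$, where $\bar X' = \sSpec_X(q_*\sO_{X'})$, $p$ is affine, and $j$ is a quasi-compact open immersion, and then to establish the lemma separately in the affine and open-immersion cases before composing.

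For $p\colon \bar X'\to X$ affine, I would set $A = p_*\sO_{\bar X'}$ so that $\QCoh(\bar X') \simeq \Mod_{\QCoh(X)}(A)$. Proposition~\ref{P:char_aff_brandenburg} then identifies $\Hom_{c\otimes}(\QCoh(\bar X'),\QCoh(T))$ with the category whose objects are pairs $(G,h)$ with $G \in \Hom_{c\otimes}(\QCoh(X),\QCoh(T))$ and $h \in \Hom_{\CMon(\QCoh(T))}(G(A),\sO_T)$, and whose morphisms are natural transformations $G\Rightarrow G'$ compatible with the $h$'s. The universal property of relative $\sSpec$ gives the analogous description of $\Hom(T,\bar X')$: pairs $(g,\tilde h)$ where $g\colon T\to X$ and $\tilde h\colon g^*A\to \sO_T$ is an $\sO_T$-algebra map, with $2$-morphisms constrained identically. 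Under these identifications $\TF_{\bar X'}(T)$ acts as $\TF_X(T)$ on first coordinates and as the identity on the algebra data, so faithfulness, full faithfulness, and essential surjectivity of $\TF_X(T)$ transfer immediately.

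For $j\colon U\hookrightarrow X$ a quasi-compact open immersion, I would first choose a quasi-coherent ideal $\mathcal{I}\subseteq \sO_X$ whose vanishing locus is $X\setminus U$ (available because $j$ is quasi-compact, by working smooth-locally on $X$ and gluing). By construction $j^*\mathcal{I}\to \sO_U$ is an isomorphism. Given a cocontinuous tensor functor $F\colon \QCoh(U)\to \QCoh(T)$, put $G = F\circ j^*$; if $\TF_X(T)$ is essentially surjective then $G\cong g^*$ for some $g\colon T\to X$. Applying $F$ to the isomorphism $j^*\mathcal{I}\to \sO_U$ and transporting via $G\cong g^*$ shows that $g^*\mathcal{I}\to \sO_T$ is an isomorphism, whence $g$ factors uniquely as $g = j\circ g'$. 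The adjunction identity $j^*j_*\cong \mathrm{id}_{\QCoh(U)}$ for a quasi-compact quasi-separated open immersion then yields, naturally in $\mathcal{F}\in \QCoh(U)$,
\[
F(\mathcal{F}) = F(j^*j_*\mathcal{F}) \cong g^*(j_*\mathcal{F}) = g'^*j^*j_*\mathcal{F} \cong g'^*\mathcal{F},
\]
so $F\cong g'^*$ as cocontinuous tensor functors.

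Faithfulness and full faithfulness in the open-immersion case follow by similar $2$-categorical bookkeeping: $j$ is a monomorphism, so whiskering with $j$ identifies $2$-morphisms $f_1\Rightarrow f_2$ in $\Hom(T,U)$ with $2$-morphisms $jf_1\Rightarrow jf_2$ in $\Hom(T,X)$; and natural transformations between cocontinuous tensor functors $F_1,F_2\colon \QCoh(U)\to \QCoh(T)$ are determined by their precompositions with $j^*$, via the formula $\mathcal{F}\cong j^*j_*\mathcal{F}$. The main obstacle I expect is checking that the final isomorphism $F\cong g'^*$ is truly an isomorphism of tensor functors rather than only of underlying additive functors; this is handled by observing that both sides preserve tensor products and units and that the isomorphism already intertwines the tensor-natural family $F(j^*\mathcal{G})\cong g^*\mathcal{G}$, so compatibility at the level of $\otimes$ follows by the cocontinuity of $F$ together with the essential surjectivity of $j^*$.
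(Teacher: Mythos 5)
Your proposal follows the paper's proof exactly: both factor $q$ as a quasi-compact open immersion into $\sSpec_X(q_*\sO_{X'})$ followed by an affine morphism, handle the affine case via Proposition~\ref{P:char_aff_brandenburg}, and handle the open immersion case via the counit isomorphism $j^*j_*\cong \id$ (the paper delegates this last step to a cited result of Brandenburg--Chirvasitu, whose content you have correctly reconstructed). The argument is correct.
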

\begin{proof}
  Since $q$ is the composition
  of a quasi-compact open immersion followed by an affine morphism, 
  it suffices to treat these two cases separately. When $q$ is affine the
  result is an easy consequence of
  Proposition \ref{P:char_aff_brandenburg}. If $q$ is a quasi-compact open
  immersion, then the counit $q^*q_*\to \id{\QCoh(X')}$ is an isomorphism; the
  result now follows from \cite[Prop.~2.3.6]{MR3144607}.
\end{proof}
The second lemma is well-known (e.g., it is a very special case
of~\cite[Thm.~3.4.2]{MR3144607}).
\begin{lemma}\label{L:qaff-tensorial:abs}
Every quasi-affine scheme is tensorial.
\end{lemma}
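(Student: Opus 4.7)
The plan is to reduce, via Lemma~\ref{L:qaff-tensorial:rel}, to the single case $X=\Spec\Z$, and then verify that base case by hand.

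For the reduction, I would use that by definition a quasi-affine scheme $X$ admits a quasi-compact open immersion $X \hookrightarrow \Spec B$ for some ring $B$. The structure morphism $X \to \Spec\Z$ therefore factors as a quasi-compact open immersion followed by an affine morphism, and is in particular quasi-affine. Applying Lemma~\ref{L:qaff-tensorial:rel} with $X'=X$ and ambient stack $\Spec\Z$, the lemma then follows from the assertion that $\Spec\Z$ is tensorial.

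For the base case, let $T$ be an arbitrary algebraic stack. Since $\Spec\Z$ is terminal in the category of schemes, $\Hom(T,\Spec\Z)$ is a one-morphism groupoid, and it suffices to check that $\Hom_{c\otimes}(\Mod(\Z),\QCoh(T))$ is also equivalent to a point. The key observation is that $\Mod(\Z)$ behaves as the ``free'' Grothendieck abelian tensor category on its unit: any cocontinuous tensor functor $F\colon \Mod(\Z)\to \QCoh(T)$ sends the unit $\Z$ to $\sO_T$ (as part of the tensor structure), hence each free abelian group $\Z^{\oplus I}$ to $\sO_T^{\oplus I}$. Since every abelian group $M$ is a cokernel of a morphism between free abelian groups and $F$ preserves cokernels, $F$ is canonically isomorphic to the functor $M\mapsto M\otimes_{\Z}\sO_T$. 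In the same way, any tensor natural transformation between two such functors is determined by, and must equal the identity on, the unit.

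The only real obstacle is the routine bookkeeping in the previous paragraph, ensuring that the identifications $F(\Z)\cong\sO_T$ and the resulting extensions to all of $\Mod(\Z)$ are not just possible but unique up to a unique tensor isomorphism; this is what makes $\Hom_{c\otimes}(\Mod(\Z),\QCoh(T))$ contractible as a category, completing the proof.
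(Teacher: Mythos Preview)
Your proposal is correct and follows the same strategy as the paper: reduce to $X=\Spec\Z$ via Lemma~\ref{L:qaff-tensorial:rel}, then handle the base case. The paper simply asserts that the tensoriality of $\Spec\Z$ is well-known and cites \cite[Cor.~2.2.4]{MR3144607} and \cite[Cor.~5.2.3]{brandenburg_thesis}, whereas you supply the standard sketch that $\Mod(\Z)$ is freely generated as a Grothendieck abelian tensor category by its unit; your added detail is consistent with those references.
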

\begin{proof}
  By Lemma~\ref{L:qaff-tensorial:rel}, it is
  sufficient to prove that $X=\Spec\mathbb{Z}$ is tensorial, which
  is well-known. We refer the interested reader to
  \cite[Cor.~2.2.4]{MR3144607} or \cite[Cor.~5.2.3]{brandenburg_thesis}.
\end{proof}
Lacking an answer to Question \ref{Q:pseudo-geometric} in general, we
are forced to make the following definition to treat natural
transformations that are not isomorphisms.
\begin{definition}
  An algebraic stack $X$ is \emph{affine-pointed} if
  every morphism $\Spec k \to X$, where $k$ is a field, is affine.
\end{definition}
Note that if $X$ is affine-pointed, then it has affine stabilizers.
The following lemma shows that many algebraic stacks with affine
stabilizers that are encountered in practice are affine-pointed.
\begin{lemma}\label{L:fields-to-stacks}
  Let $X$ be an algebraic stack.
  \begin{enumerate}
  \item \label{I:fields-to-stacks:qaff} If $X$ has quasi-affine
    diagonal, then $X$ is affine-pointed.
  \item \label{I:fields-to-stacks:local} Let $g\colon V \to X$ be a
    quasi-finite and faithfully flat morphism of finite presentation
    (not necessarily representable). If $V$ is affine-pointed, then
    $X$ is affine-pointed.
  \end{enumerate}  
\end{lemma}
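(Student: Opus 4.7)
Both parts reduce to showing that a morphism $x\colon \Spec k \to X$ is affine, which amounts to proving that for every affine scheme $T$ with a morphism $t\colon T \to X$ the base change $Z := \Spec k \times_X T$ is an affine scheme. In each case I will pursue this base change analysis, but with different tools.

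For \textbf{(i)}, my starting point is the $2$-cartesian square
\[
\xymatrix@C=3em{ Z \ar[r] \ar[d] & \Spec k \times T \ar[d]^{(x,\,t)} \\ X \ar[r]^-{\Delta_X} & X \times X, }
\]
which exhibits the top arrow as a pullback of $\Delta_X$. Since $\Delta_X$ is quasi-affine by hypothesis and $\Spec k \times T$ is affine, $Z$ is a quasi-affine scheme, and the task becomes to upgrade ``quasi-affine'' to ``affine''. I plan to do this by factoring $\Delta_X$ as a quasi-compact open immersion $X \hookrightarrow \bar X$ into an affine morphism $\bar X \to X \times X$; pulling back then yields $Z \hookrightarrow \bar Z$ with $\bar Z$ affine, and it remains to show the complement $\bar Z \setminus Z$ is empty. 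The diagonal hypothesis enters a second time through the stabilizer $G_x := \Spec k \times_X \Spec k$, which is the fibre of $\Delta_X$ over $(x,x)$, hence a quasi-affine group scheme over $k$, which must in fact be affine by the classical result on group schemes over a field. The fibrewise picture of $Z \hookrightarrow \bar Z$ is then controlled: over points where an isomorphism in $X$ between $t$ and $x$ exists, the fibre of $Z$ is a $G_x$-torsor, affine by affineness of $G_x$, and must agree with the affine fibre of $\bar Z$; over the remaining points both fibres should be empty. The main obstacle I expect is making this last dichotomy rigorous---in particular identifying the fibre of $\bar X$ over a non-``diagonal'' point of $X \times X$ with the empty scheme---which requires using the quasi-affineness of $\Delta_X$ more finely than the purely formal argument that merely produces quasi-affineness of $Z$.

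For \textbf{(ii)}, my plan is a direct fpqc descent argument that does not use (i). Given $x\colon \Spec k \to X$ and an affine test $T \to X$, the pullback $V \times_X T \to T$ is quasi-finite, faithfully flat, and of finite presentation, hence an fpqc cover; as ``is an affine scheme'' descends along fpqc covers, it is enough to prove that
\[
\Spec k \times_X T \times_T (V \times_X T) \;\cong\; \Spec k \times_X V \times_X T
\]
is affine over $V \times_X T$. This latter morphism is the base change along $V \times_X T \to V$ of the morphism $W := \Spec k \times_X V \to V$, so it suffices to show $W \to V$ is affine. By base change, $W \to \Spec k$ is quasi-finite of finite presentation, whence $W$ is a zero-dimensional algebraic stack of finite presentation over $\Spec k$. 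Choosing a smooth cover by a scheme $U_W \to W$, the scheme $U_W$ is quasi-finite of finite presentation over $k$, hence a finite disjoint union $\coprod_i \Spec K_i$ of spectra of finite field extensions of $k$. Each composite $\Spec K_i \to W \to V$ is a field-valued point of $V$, and so is affine by the assumption that $V$ is affine-pointed; thus $U_W \to V$ is affine. A final application of fpqc descent to the cover $U_W \to W$ then yields that $W \to V$ is affine, completing the argument.
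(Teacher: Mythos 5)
For part (i), the reduction to showing $Z=\bar Z$ is where the entire content of the lemma lives, and the fibrewise argument you propose cannot close it. Knowing that each fibre of $Z\to \Spec k\times T$ is affine (a torsor under the affine group scheme $G_x$, or empty) does not make the quasi-affine scheme $Z$ affine: a quasi-affine morphism with affine fibres need not be affine (e.g.\ $\mathbb{A}^2\setminus\{0\}\to\mathbb{A}^1$, $(u,v)\mapsto u$, has all fibres affine). And the identification of the fibres of $\bar Z$ that your dichotomy requires is precisely a cohomology-and-base-change statement: $\bar X=\sSpec_{X\times X}(\Delta_*\sO_X)$, and $\Delta_*\sO_X$ does not commute with base change to points in general, so the fibre of $\bar Z$ over a point is \emph{not} the affine hull of the corresponding fibre of $Z$. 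The paper supplies exactly the missing cohomological input: since every extension of quasi-coherent sheaves on $\Spec k$ splits, $x_*$ is cohomologically affine; cohomological affineness is preserved under base change along a smooth $p\colon U\to X$ with $U$ affine \emph{because} $X$ has quasi-affine diagonal \cite[Prop.~3.9(vii)]{2008arXiv0804.2242A}; and Serre's criterion then upgrades the quasi-compact, quasi-separated morphism $\Spec k\times_X U\to U$ to an affine one. Without an argument of this kind, the step you yourself flag as ``the main obstacle'' is not merely hard to make rigorous --- the objects it compares are not the ones you think they are.

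For part (ii) the skeleton is right and agrees with the paper's (reduce by fppf descent along $V\times_X T\to T$ to showing $W=\Spec k\times_X V\to V$ is affine, cover $W$ by spectra of fields, use that $V$ is affine-pointed), but the last step is not descent. $U_W\to W$ is a cover of the \emph{source} of the morphism $W\to V$, and affineness of $U_W\to V$ does not pass to $W\to V$ along an fpqc cover of $W$; with a smooth presentation this fails outright (consider $\mathbb{A}^1_k\to B\mathbb{G}_{a,k}\to\Spec k$). Relatedly, an arbitrary smooth presentation $U_W\to W$ need not be quasi-finite over $k$, so it need not be a disjoint union of field spectra. What is needed, and what the paper uses, is a \emph{finite surjective} morphism $W_0\to W$ with $W_0$ a finite disjoint union of spectra of fields (this exists because $|W|$ is discrete with finite stabilizers), combined with Chevalley's theorem --- affineness can be checked after a finite surjective morphism of the source --- applied smooth-locally on $V$. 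With those two substitutions your argument for (ii) goes through.
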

\begin{proof}
  Throughout, we fix a field $k$ and a morphism $x\colon \Spec k \to
  X$.

  For \ref{I:fields-to-stacks:qaff}, since $k$ is a field,
  every extension in $\QCoh(\Spec k)$ is split; thus
  $x_*$ is cohomologically affine
  \cite[Def.~3.1]{2008arXiv0804.2242A}. Since $X$ has quasi-affine
  diagonal, this property is preserved after pulling back $x$ along a
  smooth morphism $p\colon U \to X$, where $U$ is an affine scheme
  \cite[Prop.~3.10(vii)]{2008arXiv0804.2242A}. By Serre's Criterion
  \cite[II.5.2.2]{EGA}, the morphism $\Spec k \times_X
  U \to U$ is affine; and this case follows.

  For \ref{I:fields-to-stacks:local}, the pullback of $g$ along $x$
  gives a quasi-finite and faithfully flat morphism $g_0 \colon V_0
  \to \Spec k$. Since $V_0$ is discrete with finite stabilizers,
  there exists a finite surjective morphism $W_0\to V_0$ where $W_0$ is a
  finite disjoint union of spectra of fields. By assumption $W_0\to V_0\to V$ is
  affine; hence so is $V_0\to V$ (by Chevalley's Theorem
  \cite[Thm.~8.1]{rydh-2009} applied smooth-locally on $V$). By descent, $\Spec k \to X$ is affine and
  the result follows.
\end{proof}
The following lemma highlights the benefits of affine-pointed stacks.
\begin{lemma}\label{L:aff-pointed-use}
  Let $f_1$, $f_2 \colon T\to X$ be morphisms of algebraic stacks and
  let $\gamma\colon f_1^*\Rightarrow f_2^*$ be a natural
  transformation of cocontinuous tensor functors. If $X$ is
  affine-pointed, then the induced maps of topological spaces $|f_1|$, $|f_2|
  \colon |T| \to |X|$ coincide.
\end{lemma}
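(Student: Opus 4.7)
The plan is to reduce to $T=\Spec k$ for a field $k$, transpose the given natural transformation via adjunction into a homomorphism of quasi-coherent $\sO_X$-algebras, and recover a morphism of $X$-stacks by relative $\sSpec$. For the reduction, represent a point of $|T|$ by $t\colon \Spec k\to T$. Whiskering $\gamma$ with the cocontinuous tensor functor $t^*$ yields a tensor natural transformation $t^*\gamma\colon (f_1\circ t)^*\Rightarrow (f_2\circ t)^*$, and $|f_i|(t)\in |X|$ is precisely the image of the unique point of $\Spec k$ under $f_i\circ t$. So we may assume $T=\Spec k$.

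Since $X$ is affine-pointed, each $f_i\colon \Spec k\to X$ is affine, and hence $A_i := (f_i)_*\sO_{\Spec k}$ is a quasi-coherent $\sO_X$-algebra with $\sSpec_X(A_i)=(\Spec k,f_i)$. The right adjoints $(f_i)_*$ of the symmetric monoidal functors $f_i^*$ are lax symmetric monoidal, and the tensor natural transformation $\gamma$ transposes to a natural transformation $\gamma^\vee\colon (f_2)_*\Rightarrow (f_1)_*$ of lax symmetric monoidal functors (routine doctrinal adjunction: the monoidal compatibilities of $\gamma$ translate directly through the standard mate formula involving the units and counits). Evaluating $\gamma^\vee$ at the unit algebra $\sO_{\Spec k}\in \CMon(\QCoh(\Spec k))$ produces a homomorphism of $\sO_X$-algebras $A_2\to A_1$.

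Applying relative $\sSpec_X$ converts this algebra map into a morphism of $X$-stacks $g\colon (\Spec k,f_1)\to (\Spec k,f_2)$, namely a morphism of schemes $g\colon \Spec k\to \Spec k$ equipped with a $2$-isomorphism $f_1\simeq f_2\circ g$. Since $|\Spec k|$ is a single point, the induced map $|g|$ is the identity, and so $|f_1|(*)=|f_2|(|g|(*))=|f_2|(*)$ in $|X|$, as required. The conceptual heart of the proof is that the adjoint of a tensor natural transformation between strong monoidal functors automatically respects the lax monoidal structure on the right adjoints, which is what lets us extract the algebra homomorphism $A_2\to A_1$ and, through affineness, a genuine morphism of $X$-stacks.
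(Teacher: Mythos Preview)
Your proof is correct and follows essentially the same approach as the paper: reduce to $T=\Spec k$, use affine-pointedness to get that the $f_i$ are affine, transpose $\gamma$ to an $\sO_X$-algebra homomorphism $(f_2)_*\sO_T\to (f_1)_*\sO_T$, and apply $\sSpec_X$ to obtain a morphism $\Spec k\to\Spec k$ over $X$ intertwining $f_1$ and $f_2$. The only difference is cosmetic: the paper phrases the final step as replacing $X$ by $T$ and invoking tensoriality of affine schemes (Lemma~\ref{L:qaff-tensorial:abs}) to conclude that $\gamma$ is even realizable, whereas you stop once you have the $X$-morphism $g$ and observe directly that $|\Spec k|$ is a point, which already suffices for the conclusion $|f_1|=|f_2|$.
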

\begin{proof}
  It suffices to prove that if $T=\Spec k$, where $k$ is a field, then
  $\gamma$ is realizable. Since $X$ is affine-pointed, the morphisms $f_1$ and $f_2$ are
  affine. Also, the natural
  transformation $\gamma$ induces, by adjunction, a morphism of
  quasi-coherent $\sO_X$-algebras $\gamma^\vee(\sO_T)\colon (f_2)_*\sO_T \to
  (f_1)_*\sO_T$. In particular, $\gamma^\vee(\sO_T)$ induces a
  morphism $v\colon T \to T$ over $X$. We are now free to replace $X$
  by $T$, $f_2$ by $\id{T}$, and $f_1$ by $v$. Since $T$ is affine,
  the result now follows from Lemma \ref{L:qaff-tensorial:abs}.
\end{proof}
We can now prove the following proposition (generalizing Lurie's
corresponding result for an algebraic stack with affine diagonal).
\begin{proposition}\label{P:quaff_full_faithful}
  Let $X$ be an algebraic stack.
  \begin{enumerate}
      \item \label{I:quaff_ff:abs}If $T$ is an algebraic stack and $X$ has
    quasi-affine diagonal, then the functor $\TF{X}(T)$ is fully
    faithful.
  \item \label{I:quaff_ff:rel} Let $T$ be a quasi-affine scheme and
    let $f_1$, $f_2 \colon T \to X$ be quasi-affine morphisms.
    \begin{enumerate}
    \item\label{I:quaff_ff:rel_faith} If $\alpha$, $\beta \colon f_1
      \Rightarrow f_2$ are $2$-morphisms and $\alpha^* = \beta^*$ as
      natural transformations $f_1^* \Rightarrow f_2^*$, then $\alpha
      = \beta$.
    \item \label{I:quaff_ff:rel_full} Let $\gamma \colon f_1^*
      \Rightarrow f_2^*$ be a natural transformation of cocontinuous
      tensor functors.  If $\gamma$ is an isomorphism or $X$ is
      affine-pointed, then $\gamma$ is realizable.
    \end{enumerate}
  \end{enumerate}
\end{proposition}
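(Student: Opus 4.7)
The plan is to reduce everything to the case of quasi-affine scheme targets, where tensoriality is already established by Lemmas~\ref{L:qaff-tensorial:abs} and~\ref{L:qaff-tensorial:rel}. The main tool for part~(i) is the Stein-type factorization of each quasi-affine morphism: write $f_i\colon T \xrightarrow{j_i} T_i := \sSpec_X(\mathcal{A}_i) \xrightarrow{\pi_i} X$ with $\mathcal{A}_i := (f_i)_*\sO_T$, where $\pi_i$ is affine and $j_i$ is a quasi-compact open immersion.

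For faithfulness (i)(a), a $2$-morphism $\alpha\colon f_1 \Rightarrow f_2$ corresponds, by adjunction, to an isomorphism of $\sO_X$-algebras $\psi_\alpha\colon \mathcal{A}_2 \xrightarrow{\sim} \mathcal{A}_1$, equivalently to an $X$-isomorphism $T_1 \xrightarrow{\sim} T_2$ compatible with the $j_i$, and this correspondence is bijective. The algebra map $\psi_\alpha$ can be read off from $\alpha^*$ by evaluating at $\mathcal{A}_1 \in \QCoh(X)$ and passing through the $(f_1^*,(f_1)_*)$-adjunction together with the counit $f_1^*\mathcal{A}_1 \to \sO_T$; thus $\alpha^* = \beta^*$ forces $\psi_\alpha = \psi_\beta$, whence $\alpha = \beta$. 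For fullness (i)(b), one reverses this: given $\gamma\colon f_1^* \Rightarrow f_2^*$, compose $\gamma(\mathcal{A}_2)\colon f_1^*\mathcal{A}_2 \to f_2^*\mathcal{A}_2$ with the counit $f_2^*\mathcal{A}_2 \to \sO_T$ (an algebra map because $\gamma$ is a tensor natural transformation) and take the adjoint, yielding an $\sO_X$-algebra map $\psi_\gamma\colon \mathcal{A}_2 \to \mathcal{A}_1$ and hence $\Phi\colon T_1 \to T_2$ over $X$. The key remaining step is to verify that $\Phi$ restricts to an $X$-isomorphism between the open substacks $j_1(T) \subseteq T_1$ and $j_2(T) \subseteq T_2$, so as to recover a $2$-morphism $f_1 \Rightarrow f_2$ inducing $\gamma$. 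If $\gamma$ is an isomorphism, then so is $\psi_\gamma$, and a pointwise argument (e.g.\ pulling back residue-field algebras at points of $|X|$) gives $|f_1|=|f_2|$, allowing the two open substacks to be identified; if instead $X$ is affine-pointed, this identification is provided directly by Lemma~\ref{L:aff-pointed-use}. Finally, tensoriality of the quasi-affine scheme $T$ (Lemma~\ref{L:qaff-tensorial:abs}) promotes the resulting $X$-isomorphism $T \to T$ to the desired $2$-morphism.

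For part~(ii), we reduce to part~(i) by smooth descent. Choose a smooth presentation $\pi\colon T' \to T$ by a disjoint union of affine schemes, handled one component at a time. Since $X$ has quasi-affine diagonal, every morphism from an affine scheme to $X$ is quasi-affine: its graph is a pullback of $\Delta_X$, hence quasi-affine, and composing with the affine projection yields a quasi-affine morphism. Thus $f_i \circ \pi$ satisfies the hypotheses of part~(i), and Lemma~\ref{L:fields-to-stacks}\ref{I:fields-to-stacks:qaff} further ensures that $X$ is affine-pointed, so part~(i)(b) applies without needing $\gamma$ to be an isomorphism. Because $\Hom(-,X)$ is a stack in the fppf topology and the target of $\TF_X$ is even an fpqc stack, faithfulness and fullness descend from $T'$ to $T$ along the smooth groupoid $T' \times_T T' \rightrightarrows T'$. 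The main obstacle throughout is the open-substack identification at the end of part~(i)(b): one must track carefully that $\psi_\gamma$ truly yields a $2$-isomorphism of the original $f_i$ and not merely a bare $X$-morphism $g\colon T \to T$ satisfying $f_2 \circ g = f_1$, which is exactly the role played by the comparison $|f_1|=|f_2|$.
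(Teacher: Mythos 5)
Your overall architecture coincides with the paper's: factor each $f_i$ as an open immersion $j_i\colon T\to T_i=\sSpec_X((f_i)_*\sO_T)$ followed by an affine morphism, convert ($2$\nobreakdash-)morphisms into $\sO_X$-algebra maps by adjunction, identify the open images, and finish with tensoriality of the quasi-affine scheme $T$ (Lemma~\ref{L:qaff-tensorial:abs}); part (ii) is reduced to affine $T$ by the graph argument exactly as in the paper. Parts (i)(a) and (ii) are fine (for (i)(a) you should say explicitly that injectivity of $\alpha\mapsto\psi_\alpha$ comes from $j_1,j_2$ being monomorphisms, but that is a gloss, not a gap).

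There is, however, a genuine gap at the step you yourself single out as the key one in (i)(b). Knowing $|f_1|=|f_2|$ as maps $|T|\to|X|$ does \emph{not} identify $\Phi(j_1(T))$ with $j_2(T)$ inside $T_2$: the structure morphism $\pi_2\colon T_2\to X$ is affine, not a monomorphism, so two open subschemes of $T_2$ with the same image in $X$ need not coincide. What is actually needed is to transport $\gamma$ down the factorization: the adjoint $\gamma^\vee\colon (f_2)_*\Rightarrow(f_1)_*$ induces (via the module-category formalism underlying Proposition~\ref{P:char_aff_brandenburg}) a natural transformation $(\Phi\circ j_1)^*\Rightarrow j_2^*$ of tensor functors out of $\QCoh(T_2)$, and the image comparison must then be run \emph{in} $T_2$ --- either by Lemma~\ref{L:aff-pointed-use} applied to $\Phi\circ j_1,\,j_2\colon T\to T_2$ (noting that $T_2$ is affine-pointed because it is affine over the affine-pointed $X$), or, when $\gamma$ is an isomorphism, by pulling back the structure sheaves of the closed complements of the two open images in $T_2$. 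Relatedly, your proposed device of ``pulling back residue-field algebras at points of $|X|$'' is not available for a general algebraic stack $X$ (a point of $|X|$ does not give a quasi-coherent sheaf on $X$); the paper's substitute is precisely the structure sheaves of the closed complements of the quasi-compact open images. Once the transported transformation is in hand, your final appeal to tensoriality of $T$ to realize it goes through as in the paper.
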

\begin{proof}
  For \ref{I:quaff_ff:abs}, we may assume that $T$ is an affine scheme
  (Lemma \ref{L:fppf_local_tannakian}). Then
  every morphism $T \to X$ is quasi-affine and the result follows
  by~\ref{I:quaff_ff:rel} and
  Lemma~\ref{L:fields-to-stacks}\ref{I:fields-to-stacks:qaff}.

  For \ref{I:quaff_ff:rel},
  there are quasi-compact open immersions ${i_k \colon T
  \hookrightarrow V_k}$ over $X$, where $V_k :=
  \sSpec_{X}((f_k)_*\sO_T)$ and $k=1$, $2$.
  Let $v_k \colon V_k \to X$ be the induced
  $1$-morphism.

  We first treat \ref{I:quaff_ff:rel_faith}. The hypotheses imply that
  $\alpha_* = \beta_*$ as natural isomorphisms of functors from
  $(f_2)_*$ to $(f_1)_*$. In particular, $\alpha_*$ and $\beta_*$
  induce the same $1$-morphism from $V_1$ to $V_2$ over $X$. Since
  $i_1$ and $i_2$ are open immersions, they are monomorphisms; hence
  $\alpha=\beta$.

  We now treat \ref{I:quaff_ff:rel_full}. The natural transformation $\gamma\colon f_1^* \Rightarrow f_2^*$
  uniquely induces a natural transformation of lax symmetric monoidal functors
  $\gamma^\vee \colon (f_2)_* \Rightarrow (f_1)_*$. In particular, there is an induced morphism of quasi-coherent $\sO_X$-algebras
  $\gamma^\vee(\sO_T) \colon (f_2)_*\sO_T \to (f_1)_*\sO_T$; hence a
  morphism of algebraic stacks $g\colon V_1 \to V_2$ over $X$. Note
  that $\gamma^\vee$ uniquely induces a natural transformation of
  lax symmetric monoidal functors
  $(i_2)_* \Rightarrow g_*(i_1)_*$, and by adjunction we have a
  uniquely induced natural transformation of tensor functors
  $\gamma' \colon (g\circ i_1)^* \Rightarrow i_2^*$.

  Replacing $X$ by $V_2$, $f_1$ by $g\circ i_1$, $f_2$ by $i_2$, and $\gamma$ by $\gamma'$, 
  we may assume that $f_2$ is a quasi-compact open
  immersion such that $\sO_X \to (f_2)_*\sO_T$ is an isomorphism. 

  If $\gamma$ is an isomorphism, then $f_1$ is also a quasi-compact open immersion. Let $Z_1$ and $Z_2$ denote closed substacks of $X$ whose complements are $f_1(T)$ and $f_2(T)$, respectively. Then $f_1^*\sO_{Z_2} \cong f_2^*\sO_{Z_2} \cong 0$, so $f_1(T) \subseteq f_2(T)$. Arguing similarly, we obtain the reverse inclusion and we see that $f_1(T) = f_2(T)$. Since $f_1$ and $f_2$ are open immersions, we obtain the result when $\gamma$ is assumed to be an isomorphism.

  Otherwise, Lemma \ref{L:aff-pointed-use} implies that $f_1$
  factors through $f_2(T) \subseteq X$. We may now replace $X$ by
  $T$ and $\gamma$ with $(f_2)_*(\gamma)$~\cite[Prop.~2.3.6]{MR3144607}.
  Then $X$ is quasi-affine and the result follows from Lemma
  \ref{L:qaff-tensorial:abs}.
\end{proof}
From Proposition \ref{P:quaff_full_faithful}\ref{I:quaff_ff:rel_full}, we obtain an analogue of Lemma \ref{L:aff-pointed-use} for natural isomorphisms of functors when $X$ has affine stabilizers (as opposed to affine-pointed). 
\begin{corollary}\label{C:points-affine-stabs}
  Let $f_1$, $f_2 \colon T\to X$ be morphisms of algebraic stacks and
  let $\gamma\colon f_1^*\simeq f_2^*$ be a natural isomorphism of
  cocontinuous tensor functors. If $X$ has affine stabilizers and
  quasi-compact diagonal, then the induced maps of
  topological spaces $|f_1|$, $|f_2| \colon |T| \to |X|$ coincide.
\end{corollary}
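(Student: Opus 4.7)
The plan is to reduce to the case $T = \Spec k$ for a field $k$ and then invoke Proposition~\ref{P:quaff_full_faithful}\ref{I:quaff_ff:rel_full} in its ``$\gamma$ is an isomorphism'' alternative, which does \emph{not} require affine-pointedness. The hypothesis of affine-pointedness in Lemma~\ref{L:aff-pointed-use} was only used there because the natural transformation $\gamma$ was not assumed to be an isomorphism; since here $\gamma$ is assumed to be an isomorphism, we can work with the weaker condition of affine stabilizers and quasi-compact diagonal.

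More precisely, I would argue as follows. To show $|f_1| = |f_2|$, it suffices to check that for every point $t \in |T|$ represented by a morphism $\iota \colon \Spec k \to T$ (with $k$ a field), the composites $f_i \circ \iota \colon \Spec k \to X$ have the same image in $|X|$. Pulling back $\gamma$ along $\iota^* \colon \QCoh(T) \to \QCoh(\Spec k)$ yields a natural isomorphism of cocontinuous tensor functors $(f_1 \circ \iota)^* \simeq (f_2 \circ \iota)^*$. Thus we have reduced to the case $T = \Spec k$.

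Next, I invoke the fact that a morphism $\Spec k \to X$ into an algebraic stack with affine stabilizers and quasi-compact diagonal is quasi-affine; this is~\cite[Thm.~B.2]{MR2774654} as already noted in the discussion after Question~\ref{Q:pseudo-geometric}. Hence both $f_1$ and $f_2$ are quasi-affine morphisms from the quasi-affine scheme $\Spec k$, and Proposition~\ref{P:quaff_full_faithful}\ref{I:quaff_ff:rel_full} applies (using the ``$\gamma$ an isomorphism'' hypothesis) to produce a $2$-isomorphism $f_1 \Rightarrow f_2$ realizing $\gamma$. In particular $|f_1| = |f_2|$ as points of $|X|$, which completes the proof.

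There is no serious obstacle: the work has already been done in Proposition~\ref{P:quaff_full_faithful} and in the cited theorem from~\cite{MR2774654}. The only thing that makes this corollary non-tautological relative to Lemma~\ref{L:aff-pointed-use} is the observation that quasi-affineness of the point $\Spec k \to X$ is automatic under the weaker hypotheses here, so we get the conclusion on underlying topological spaces without needing affine-pointedness.
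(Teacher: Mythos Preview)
Your proposal is correct and essentially identical to the paper's own proof: reduce to $T=\Spec k$, invoke \cite[Thm.~B.2]{MR2774654} to see that $f_1$ and $f_2$ are quasi-affine, and then apply Proposition~\ref{P:quaff_full_faithful}\ref{I:quaff_ff:rel_full} in the ``$\gamma$ an isomorphism'' alternative.
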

\begin{proof}
  It suffices to prove the result when $T=\Spec k$, where $k$ is a
  field. Since $X$ has affine stabilizers and quasi-compact diagonal
  the morphisms $f_1$ and $f_2$ are quasi-affine
  \cite[Thm.~B.2]{MR2774654}. The result now follows from Proposition
  \ref{P:quaff_full_faithful}\ref{I:quaff_ff:rel_full}.
\end{proof}
  The following result, in a slightly different context, was proved by Sch\"appi \cite[Thm.~1.3.2]{2012arXiv1206.2764S}. Using the Totaro--Gross theorem, we can simplify Sch\"appi's arguments in the algebro-geometric setting.
  \begin{theorem}\label{T:resprop-tensorial}
    Let $X$ be a quasi-compact and quasi-separated algebraic stack
    with affine stabilizers. If $X$ has the resolution property, then it is
    tensorial.
  \end{theorem}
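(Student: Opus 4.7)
The plan is to use the Gross--Totaro theorem to reduce to the universal case $X = BGL_n$, and then to recover the classifying morphism of a cocontinuous tensor functor from its value on the tautological representation.

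By the Gross--Totaro theorem, the hypotheses on $X$ produce a quasi-affine morphism $X \to BGL_n$ for some $n \geq 0$. By Lemma~\ref{L:qaff-tensorial:rel} it therefore suffices to prove that $BGL_n$ is tensorial. Since $BGL_n$ has affine diagonal, Proposition~\ref{P:quaff_full_faithful}\ref{I:quaff_ff:abs} already supplies that $\TF_{BGL_n}(T)$ is fully faithful for every algebraic stack $T$, so the task is reduced to establishing essential surjectivity.

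Given a cocontinuous tensor functor $F \colon \QCoh(BGL_n) \to \QCoh(T)$, let $V$ be the tautological rank-$n$ bundle on $BGL_n$. Since $V$ is dualizable and tensor functors preserve dualizable objects, $F(V)$ is a finite locally free sheaf on $T$. The vanishing $\wedge^{n+1} V = 0$ forces $F(V)$ to have rank at most $n$, and the invertibility of the determinant $\wedge^n V$ then forces the rank to be exactly $n$. Let $f \colon T \to BGL_n$ be the morphism classifying the rank-$n$ bundle $F(V)$, so that one has a canonical isomorphism $f^* V \cong F(V)$.

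The main obstacle is upgrading this isomorphism on $V$ to a natural isomorphism $f^* \Rightarrow F$ of cocontinuous tensor functors on all of $\QCoh(BGL_n)$. The key input is that $V$ tensor-generates $\QCoh(BGL_n)$ in a strong sense: every quasi-coherent representation of $GL_n$ is a filtered colimit of finitely generated subrepresentations, and each such subrepresentation is a cokernel of a morphism between finite direct sums of tensor products of $V$ and $V^\vee$. Since both $f^*$ and $F$ are cocontinuous tensor functors that preserve duals and colimits and agree on $V$, they must extend to isomorphic values on all of $\QCoh(BGL_n)$, yielding the required natural isomorphism; uniqueness and coherence of this extension are already guaranteed by the full faithfulness established in the reduction step.
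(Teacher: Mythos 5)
Your opening moves --- reducing to $\BGL_{n,\Z}$ via Gross--Totaro and Lemma~\ref{L:qaff-tensorial:rel}, and getting full faithfulness from Proposition~\ref{P:quaff_full_faithful} --- agree with the paper, and your rank computation for $F(V)$ is fine (modulo the standard fact that cocontinuous tensor functors preserve exterior powers). But from there you diverge onto a direct-reconstruction route, and the crucial step of that route is asserted rather than proved. The paper avoids your problem altogether: it applies $F$ to the regular representation $\sA=p_*\Z$ of the universal torsor $p\colon\Spec\Z\to\BGL_{n,\Z}$, written as a filtered union of subbundles containing the unit, deduces that $F(\sA)$ is a faithfully flat $\sO_T$-algebra, and then base-changes along the affine fpqc cover $\sSpec_T(F(\sA))\to T$ using Corollary~\ref{C:pb_affine_stack}; over that cover the functor factors through $\QCoh(\Spec\Z)$, which is tensorial by Lemma~\ref{L:qaff-tensorial:abs}, and one concludes by fpqc descent. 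At no point does the paper need to compare $F$ with $f^*$ for an explicitly constructed $f$.

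The gap in your argument is the claim that, because $f^*$ and $F$ are cocontinuous tensor functors preserving duals and agreeing on the generator $V$, "they must extend to isomorphic values on all of $\QCoh(\BGL_{n,\Z})$." That is the entire content of essential surjectivity, not a formality: two tensor functors agreeing on the objects of a generating family need not be isomorphic. To produce a natural isomorphism $f^*\Rightarrow F$ from $\alpha\colon f^*V\cong F(V)$ you must first verify that the induced isomorphisms on $V^{\otimes a}\otimes(V^\vee)^{\otimes b}$ commute with \emph{every} $\GL_n$-equivariant morphism between such objects; only then do they descend, independently of the chosen presentation, to cokernels, pass to filtered colimits, and assemble into a \emph{monoidal} transformation. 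The required naturality is essentially a first-fundamental-theorem statement for $\GL_n$ over $\Z$ (all equivariant maps between mixed tensor powers are generated by symmetries, evaluations and coevaluations), which is true but nontrivial integrally and nowhere invoked in your sketch. Moreover, your appeal to "full faithfulness established in the reduction step" cannot close this gap: Proposition~\ref{P:quaff_full_faithful} compares $f^*$ with $g^*$ for two actual morphisms $f,g\colon T\to\BGL_{n,\Z}$, and says nothing about transformations from $f^*$ to an abstract tensor functor $F$ not yet known to be representable. Carried out in full, your route amounts to reproving Sch\"appi's universal property of $\BGL_n$; the paper's descent argument is precisely the device for bypassing it.
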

  \begin{proof}
    Let $T$ be an algebraic stack. By
    Totaro--Gross~\cite{2013arXiv1306.5418G}, there is a quasi-affine
    morphism $g\colon X\to \BGL_{N,\Z}$. By Lemma \ref{L:qaff-tensorial:rel},
    it is enough to consider $X=\BGL_{N,\Z}$. Note that $X$ is
    quasi-compact with affine diagonal, so the functor
    $\TF{X}(T)$ is fully faithful (Proposition
    \ref{P:quaff_full_faithful}). It remains to prove that every
    cocontinuous tensor functor $f^*\colon \QCoh(X) \to \QCoh(T)$ is
    algebraic.

    Let $p\colon \Spec \Z\to\BGL_{N,\Z}$ be the universal
    $\GL_N$-bundle and let $\sA=p_*\Z$ be the regular
    representation. There is an exact sequence
    \[
    0\to \sO_{\BGL_{N,\Z}}\to \sA \to \sQ\to 0
    \]
    of flat quasi-coherent sheaves. Write $\sA$ as the directed colimit of its
    subsheaves $\sA_\lambda$ of finite type containing the unit and let
    $\sQ_\lambda = \sA_\lambda/\sO_{\BGL_{N,\Z}}\subseteq \sQ$. Then
    $\sA_\lambda$ and $\sQ_\lambda$ are vector bundles.

    It is well-known that (1) any tensor
    functor $f^*\colon \QCoh(X)\to \QCoh(T)$ preserves dualizable objects and
    exact sequences of dualizable
    objects (for example, see \cite[Def.~4.7.1 \& Lem.~4.7.10]{brandenburg_thesis}) and
    (2) the dualizable
    objects in $\QCoh(Y)$ are the vector bundles for any algebraic stack
    $Y$~\cite[Prop.~4.7.5]{brandenburg_thesis}. We thus have exact sequences
    \[
    0\to \sO_T\to f^*\sA_\lambda \to f^*\sQ_\lambda\to 0
    \]
    of vector bundles. Since $f^*$ is cocontinuous, we also obtain an
    exact sequence
    \[
    0\to \sO_T\to f^*\sA \to f^*\sQ\to 0
    \]
    of flat quasi-coherent sheaves. In particular,
    $f^*\sA$ is a faithfully flat algebra.

    Let
    $V=\sSpec_T(f^*\sA)$; then $r\colon V\to T$ is faithfully flat. By
    Corollary~\ref{C:pb_affine_stack}, we have a
    cocartesian diagram
    \[
    \xymatrix{\QCoh(V) & \QCoh(\Spec \Z) \ar[l]_-{f'\mathrlap{^*}} \\
      \QCoh(T) \ar[u]^{r^*} & \QCoh(X)\ar[u]_{p^*} \ar[l]_{f\mathrlap{^*}}.}
    \]
    Since $\Spec \Z$ is tensorial (Lemma~\ref{L:qaff-tensorial:abs}), the
    functor $f'^*$ is algebraic. Thus, $f'^*p^*\simeq r^*f^*$ is algebraic. Descent 
    along $r\colon V \to T$ (Lemma 
\ref{L:fppf_local_tannakian}\ref{LI:fppf_local_tannakian:alg}) implies that $f^*$ is algebraic.
  \end{proof}


\section{Tensor localizations}\label{S:tensor-localization}
The goal of this section is to prove the following theorem. 
\begin{theorem}\label{T:tensor-loc-for-algebraic-stacks}
Let $X$ be a quasi-compact and quasi-separated algebraic stack.  Let $i\colon Z \to X$ be a finitely presented closed immersion defined by an ideal sheaf $I$. Let $j\colon U \to X$ be the open complement of $Z$. Let $T$ be
an algebraic stack and let $f^*\colon \QCoh(X)\to \QCoh(T)$ be a cocontinuous
tensor functor. Let $i_T \colon Z_T \to T$ be the closed immersion defined by the ideal $I_T:= \Im(f^*I \to \sO_T)$. Let $j_T \colon U_T \to T$ denote the complement of $Z_T$.
\begin{enumerate}
\item \label{TI:tensor-loc-for-algebraic-stacks:open} There exists a cocontinuous tensor functor
\[
f_U^*\colon \QCoh(U)\to \QCoh(U_T),
\]
which is essentially unique, and a canonical 
  isomorphism of tensor functors $j_T^*f^*\simeq f_U^*j^*$.
\item \label{TI:tensor-loc-for-algebraic-stacks:closed} For each integer $n\geq 0$,
\[
f_{Z^{[n]}}^*:=(i_{T}^{[n]})^*f^*(i^{[n]})_*\colon \QCoh(Z^{[n]})\to \QCoh(Z_T^{[n]})
\]
 is a cocontinuous
  tensor functor and there is a canonical isomorphism of tensor functors
  $(i_T^{[n]})^*f^*\simeq f_{Z^{[n]}}^*(i^{[n]})^*$. Moreover, $f^*(i^{[n]})_*\simeq (i_T^{[n]})_*f_{Z^{[n]}}^*$.
\end{enumerate}
In addition, if $f^*$ preserves sheaves of finite type, then the same is true of $f^*_U$ and $f_{Z^{[n]}}^*$ for all $n\geq 0$. 
\end{theorem}
Theorem \ref{T:tensor-loc-for-algebraic-stacks} features in a key way in the proof of our 
main theorem (Theorem \ref{T:main_tannaka_non-noeth}), which we prove via 
stratifications and formal gluings. From this context, we hope that the long and technical statement of Theorem \ref{T:tensor-loc-for-algebraic-stacks} should appear to be quite natural. While Theorem 
\ref{T:tensor-loc-for-algebraic-stacks}\ref{TI:tensor-loc-for-algebraic-stacks:closed} 
follows easily from the results of \S\ref{SS:modules}, Theorem 
\ref{T:tensor-loc-for-algebraic-stacks}\ref{TI:tensor-loc-for-algebraic-stacks:open} is 
more subtle. It turns out, however, that it is a consequence of a more general result about 
Grothendieck abelian tensor categories (Theorem \ref{T:localization-factorization}), which is what we will spend most of this section proving.

Let $\cC$ be a Grothendieck abelian category. A \emph{Serre subcategory} is a
full non-empty subcategory $\cK\subseteq \cC$ closed under taking
subquotients and extensions. Serre
subcategories are abelian and the inclusion functor is exact. A Serre subcategory is \emph{localizing} if it is also closed under small direct sums in $\cC$, equivalently, it is closed under small colimits in $\cC$.

If $\cK \subseteq \cC$ is a Serre subcategory, then there is a quotient $\cQ$ of $\cC$ by
$\cK$ and an exact functor $q^*\colon \cC \to \cQ$, which is universal
for exact functors out of $\cC$ that vanish on $\cK$ \cite[Ch.~III]{MR0232821}. Note that $\cK$ is localizing if and only if the quotient
$q^*\colon \cC \to \cQ$ is a \emph{localization}, that is, $q^*$ admits a right
adjoint $q_* \colon \cQ \to \cC$; it follows that $\cQ$ is Grothendieck abelian, $q^*$ is cocontinuous, $q_*$ is fully
faithful and $q^*q_*\simeq \id{\cQ}$. This statement follows by combining the Gabriel--Popescu Theorem (e.g., \cite[Thm.~6.25]{MR0236236}) with \cite[Prop.~6.21]{MR0236236}.

Let $\cC$ be a Grothendieck abelian tensor category and let $\cK \subseteq \cC$ be a Serre subcategory. We say that $\cK$ is a \emph{tensor ideal} if $\cK$ is closed under
tensor products with objects in $\cC$. If $\cK$ is also localizing, then we say that $\cK$ is a \emph{localizing tensor ideal}.

If $f^*\colon \cC \to \cD$ is an exact cocontinuous tensor functor between Grothendieck abelian tensor categories, then
$\ker(f^*)$ is a localizing tensor ideal. Conversely, if $\cK\subseteq \cC$ is a localizing
tensor ideal, then the quotient $\cQ=\cC/\cK$ is a
Grothendieck abelian tensor
category, the localization $q^*\colon \cC\to \cQ$ is an exact cocontinuous
tensor functor and $\ker (q^*) = \cK$; in this situation, we will refer to $q^*$ as a \emph{tensor
  localization}. 

\begin{example}\label{E:quotab_alg_stack}
  Let $\map{f}{X}{Y}$ be a morphism of algebraic stacks. If $f$ is
  flat, then $f^*$ is exact. If $f$ is a quasi-compact flat
  monomorphism (e.g., a quasi-compact open immersion), then $\QCoh(X)$
  is the quotient of $\QCoh(Y)$ by $\ker(f^*)$. This follows from the
  fact that the counit $f^*f_*\to \id{}$ is an isomorphism so that
  $f_*$ is a section of $f^*$ \cite[Prop.~III.2.5]{MR0232821}.
\end{example}

\begin{definition}
  Let $\cC$ be a Grothendieck abelian tensor category. For $M\in \cC$ let
  $\map{\varphi_M}{\sO_{\cC}}{\sHom_{\cC}(M,M)}$ denote the adjoint to
  the canonical isomorphism $\sO_{\cC}\otimes_{\cC} M\to M$. Let the
  annihilator $\Ann_{\cC}(M)$ of $M$ be the kernel of $\varphi_M$,
  which we consider as an ideal of $\sO_{\cC}$.
\end{definition}

\begin{example}\label{E:qcohstk-localization-sequence} 
  Let $X$ be an algebraic stack and let $\sF\in \QCoh(X)$. Then
  $\Ann_{\QCoh(X)}(\sF)=\QC\bigl(\Ann_{\Mod(X)}(\sF)\bigr)$. In
  particular, if $\sF$
  is of finite type, then $\Ann_{\QCoh(X)}(\sF) = \Ann_{\Mod(X)}(\sF)$. 
\end{example}

Recall that an object $c\in \cC$ is \emph{finitely generated} if the
natural map:
\[
\varinjlim_\lambda \Hom_{\cC}(c,d_\lambda) \to
\Hom_{\cC}(c,\varinjlim_\lambda d_\lambda)
\]
is bijective for every direct system $\{d_\lambda\}_{\lambda}$ in $\cC$ with
monomorphic bonding maps. A category $\cC$ is \emph{locally finitely generated}
if it is cocomplete (all small colimits exist) and has a set $\mathcal{A}$ of finitely generated objects
such that every object $c$ of $\cC$ is a directed colimit of objects from
$\mathcal{A}$.
\begin{example}\label{E:stk_fg_description}
  Let $X$ be a quasi-compact and quasi-separated algebraic stack. The
  finitely generated objects in $\QCoh(X)$ are the quasi-coherent
  sheaves of finite type. Thus $\QCoh(X)$ is locally finitely
  generated~\cite{rydh-2014}.
\end{example}
We also require the following definition.
\begin{definition}
  Let $q^*\colon \cC \to \cQ$ be a tensor localization. Then it is \emph{supported} if $q^*(\sO_{\cC}/\Ann(K)) \cong 0$ 
  for every finitely generated object $K$ of $\cC$ such that $q^*(K) \cong 0$. 
\end{definition}
The notion of a supported tensor localization is very natural. 
\begin{example}\label{E:qcohstk-localization-factorization}
  If $f \colon X \to Y$ is a flat monomorphism of
  quasi-compact and quasi-separated algebraic stacks, then the tensor localization $f^*\colon \QCoh(Y) \to
  \QCoh(X)$ of Example \ref{E:quotab_alg_stack} is supported. Indeed,
  if $M$ is a quasi-coherent $\sO_Y$-module of finite type in the kernel
  of $f^*$, then
  $f^*\Ann_{\QCoh(Y)}(M) = \Ann_{\QCoh(X)}(f^*M) = \sO_X$.
\end{example}
We now have our key result, which also
generalizes~\cite[Lem.~3.3.6]{MR3144607}.
\begin{theorem}\label{T:localization-factorization}
  Let $\cC$ be a locally finitely generated Grothendieck abelian tensor category
  and let $\map{q^*}{\cC}{\cQ}$ be a supported tensor localization. Let
  $\cD$ be a Grothendieck abelian tensor category. If
  $\map{f^*}{\cC}{\cD}$ is a cocontinuous tensor functor such that $f^*(K)
  \cong 0$ for every finitely generated object $K$ of $\cC$ such that
  $q^*(K)\cong 0$, then $f^*$ factors essentially uniquely through
  a cocontinuous tensor functor $\map{g^*}{\cQ}{\cD}$. If $f^*$ preserves
  finitely generated objects, then so does $g^*$.
\end{theorem}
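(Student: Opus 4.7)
The strategy is to construct the desired factorization as $\bar{f}^{*} := f^{*}\circ q_{*}$ and to verify that the canonical natural transformation $f^{*}\Rightarrow f^{*}q_{*}q^{*} = \bar{f}^{*}q^{*}$ coming from the unit $\eta\colon \mathrm{id}_{\cC} \Rightarrow q_{*}q^{*}$ of the adjunction $q^{*}\dashv q_{*}$ is an isomorphism. Write $\cK := \ker q^{*}$. The first step is to promote the hypothesis from finitely generated objects to all of $\cK$: for every $K \in \cK$, since $\cC$ is locally finitely generated, $K$ is the directed colimit of its finitely generated subobjects $K_{\lambda}$; each $K_{\lambda}$ lies in $\cK$ because the Serre subcategory $\cK$ is closed under subobjects. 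Hence $f^{*}(K_{\lambda}) = 0$ by hypothesis, and cocontinuity of $f^{*}$ yields $f^{*}(K) = \colim f^{*}(K_{\lambda}) = 0$.

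Next, for each $c \in \cC$, the unit $\eta_{c}\colon c \to q_{*}q^{*}c$ has kernel $N_{c}$ and cokernel $M_{c}$ both in $\cK$. Factor $\eta_{c} = \iota_{c}\circ \pi_{c}$ with $\pi_{c}\colon c \twoheadrightarrow \Im(\eta_{c})$ an epimorphism of kernel $N_{c}$ and $\iota_{c}\colon \Im(\eta_{c}) \hookrightarrow q_{*}q^{*}c$ a monomorphism of cokernel $M_{c}$. Right exactness of $f^{*}$ combined with the extended vanishing immediately yields that $f^{*}(\pi_{c})$ is an isomorphism and that $f^{*}(\iota_{c})$ is surjective.

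The principal obstacle is to show that $f^{*}(\iota_{c})$ is also injective --- equivalently, that $f^{*}$ preserves every monomorphism $A \hookrightarrow B$ whose cokernel $K$ lies in $\cK$. I would first reduce to the case that $K$ is finitely generated by writing $M_{c}$ as a filtered colimit of its finitely generated subobjects $M_{c,\lambda}$, pulling back to an increasing family $\Im(\eta_{c}) \subseteq B_{\lambda} \subseteq q_{*}q^{*}c$, and exploiting that filtered colimits of monomorphisms in the Grothendieck abelian category $\cD$ are monomorphisms, together with cocontinuity of $f^{*}$. In the finitely generated case, the supported hypothesis is essential: setting $I := \Ann_{\cC}(K)$, we have $\sO_{\cC}/I \in \cK$ finitely generated, so $f^{*}(\sO_{\cC}/I) = 0$. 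Because $f^{*}$ is symmetric monoidal with $X \otimes 0 = 0$ in any cocomplete tensor category, every object $M$ annihilated by $I$ --- which satisfies $M \simeq M\otimes_{\cC}\sO_{\cC}/I$ --- is killed by $f^{*}$. Since $I\cdot B \subseteq A$ (as $I$ annihilates $K = B/A$), both $B/(I\cdot B)$ and its subobject $A/(I\cdot B)$ are $I$-torsion, hence killed by $f^{*}$. This reduces the comparison of $f^{*}(A) \to f^{*}(B)$ to comparing $f^{*}(I\cdot B)$ along the two canonical epimorphisms to $f^{*}(A)$ and $f^{*}(B)$, and I expect to close the argument by iterating through the descending filtration $\{I^{n}\cdot B\}$ --- whose successive quotients are $I$-torsion and hence killed --- and invoking cocontinuity to extract the desired injectivity.

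Once the natural isomorphism $f^{*}\simeq \bar{f}^{*}q^{*}$ is established, the cocontinuous symmetric monoidal structure on $\bar{f}^{*}$ is transported from $f^{*}$ along the essentially surjective $q^{*}$: every object of $\cQ$ has the form $q^{*}(c)$, every colimit in $\cQ$ is $q^{*}$ of a colimit in $\cC$ (as $q^{*}$ is a left adjoint), and the symmetric monoidal structure on $\cQ$ is induced by $q^{*}$, so cocontinuity and tensor-compatibility of $\bar{f}^{*}$ follow from those of $f^{*}$. Essential uniqueness is then the standard fact that any two such factorizations agree up to canonical isomorphism after precomposition with $q^{*}$. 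The hardest step is unambiguously the injectivity of $f^{*}(\iota_{c})$: without the supported hypothesis, a cocontinuous tensor functor is only right exact and has no reason to preserve monomorphisms with cokernel in $\cK$, so the tensor-theoretic device of annihilating $\sO_{\cC}/I$ is what makes the argument go through.
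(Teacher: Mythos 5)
Your overall architecture matches the paper's: extend the vanishing hypothesis to all of $\ker(q^*)$ by cocontinuity, reduce (via the image factorization and a filtered-colimit argument) to showing that $f^*$ inverts a monomorphism $A\to B$ whose cokernel $K$ is a finitely generated object of $\ker(q^*)$, and bring in the supported hypothesis through the annihilator ideal $I=\Ann_{\cC}(K)$. The preliminary steps are all correct, including the observation that $f^*$ kills every $I$-torsion object because $M\cong M\otimes_{\cC}\sO_{\cC}/I$ and $f^*(\sO_{\cC}/I)\cong 0$. But the step you yourself flag as the hardest --- injectivity of $f^*(A)\to f^*(B)$ --- is exactly where the sketch breaks down, and the proposed mechanism cannot be repaired as stated. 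The filtration $\{I^n\cdot B\}$ is \emph{descending}, so ``invoking cocontinuity'' buys you nothing: cocontinuity controls colimits, whereas $\bigcap_n I^n\cdot B$ is a limit that $f^*$ has no reason to preserve. All the filtration yields is a chain of surjections $f^*(I^{n+1}B)\to f^*(I^nB)$ together with surjections $f^*(IB)\to f^*(A)$ and $f^*(IB)\to f^*(B)$; a commutative triangle of surjections carries no information whatsoever about injectivity of $f^*(A)\to f^*(B)$.

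The paper closes this step with two ingredients that are absent from your proposal. First, Lemma~\ref{L:ideal-iso}: if $f^*(\sO_{\cC}/I)\cong 0$, then $f^*(I)\to\sO_{\cD}$ is an \emph{isomorphism}, not merely an epimorphism; the injectivity is extracted from the two factorizations of the multiplication $I\otimes I\to I$ (through $I\otimes\sO_{\cC}$ and through $\sO_{\cC}\otimes I$). This is strictly stronger than ``$f^*$ kills $I$-torsion objects,'' which by right-exactness only gives surjectivity of $f^*(I)\to\sO_{\cD}$. Second, since $I$ annihilates $B/A$, the action $I\otimes B\to B$ factors through $A$, producing a commutative square with a diagonal arrow $I\otimes B\to A$. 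Applying $f^*$ and using the lemma to identify $f^*(I\otimes A)\to f^*(A)$ and $f^*(I\otimes B)\to f^*(B)$ with isomorphisms, the two triangles show that $f^*(A)\to f^*(B)$ admits both a retraction and a section, hence is an isomorphism. Without the isomorphism statement of Lemma~\ref{L:ideal-iso} and this splitting trick, the key injectivity does not follow, so as written the proof has a genuine gap at its central point.
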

Note that Theorem \ref{T:localization-factorization} is trivial if $f^*$ is exact. The challenge is to
use the symmetric monoidal structure to deduce this also when $f^*$ is merely
right-exact. The proof we give is a straightforward generalization of
\cite[Lem.~3.3.6]{MR3144607}. First, we will see how Theorem \ref{T:localization-factorization} implies Theorem \ref{T:tensor-loc-for-algebraic-stacks}.
\begin{proof}[Proof of Theorem \ref{T:tensor-loc-for-algebraic-stacks}]
  For \ref{TI:tensor-loc-for-algebraic-stacks:closed}, note that $(i^{[n]})_*$ identifies $\QCoh(Z^{[n]})$ with the category of
  modules over the algebra $A_n=\sO_X/I^{n+1}$. The algebra $f^*A_n$ is
  $\sO_T/I_T^{n+1}$ and $(f_{Z^{[n]}})^*=(f_{A_n})^*$ in the terminology of
  \S\ref{SS:modules}.

  For \ref{TI:tensor-loc-for-algebraic-stacks:open}, recall that $\QCoh(X)$ is locally finitely generated
  (Example~\ref{E:stk_fg_description}) and that $j^*\colon \QCoh(X)\to
  \QCoh(U)$ is a supported localization
  (Example~\ref{E:qcohstk-localization-factorization}). If $K\in \QCoh(X)$ is
  finitely generated and $j^*K=0$, then $I^m K=0$ for sufficiently large $m$.
  Thus, the natural map $I^m \otimes_{\sO_X} K\to \sO_X\otimes_{\sO_X} K \cong
  K$ is zero. Applying $j_T^*f^*$, the map becomes the identity since
  $j_T^*f^*(I^m)\to j_T^*f^*(\sO_X)=\sO_{U_T}$ is an isomorphism. It follows that
  $j_T^*f^*K=0$. We may thus apply
  Theorem~\ref{T:localization-factorization} and deduce that $j_T^*f^*$ factors
  via $j^*$ and a tensor functor $f_U^* \colon \QCoh(U)\to \QCoh(U_T)$.
 \end{proof}
To prove
Theorem~\ref{T:localization-factorization} we require the following
lemma.
\begin{lemma}[{\cite[Lem.~3.3.2]{MR3144607}}]\label{L:ideal-iso}
  Let $\map{f^*}{\cC}{\cD}$ be a cocontinuous tensor functor.  If
  $I\subseteq \sO_{\cC}$ is an $\sO_{\cC}$-ideal such that
  $f^*(\sO_{\cC}/I)\cong 0$, then $f^*(I)\to
  f^*(\sO_{\cC})$ is an isomorphism.
\end{lemma}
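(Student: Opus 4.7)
The plan is to apply $f^*$ to the short exact sequence $0 \to I \to \sO_{\cC} \to \sO_{\cC}/I \to 0$. Since $f^*$ is cocontinuous it is right exact, so we obtain the exact sequence $f^*(I) \to \sO_{\cD} \to f^*(\sO_{\cC}/I) \to 0$. The hypothesis $f^*(\sO_{\cC}/I) \cong 0$ then gives that $\pi := f^*(\iota) \colon f^*(I) \to \sO_{\cD}$ (where $\iota \colon I \hookrightarrow \sO_{\cC}$) is an epimorphism. Denoting its kernel by $K$, the remaining task is to show $K = 0$.

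To exploit the monoidal structure, I would first observe that the multiplication $m \colon I \otimes_{\cC} I \to I$ of the ideal factors as $1 \otimes \iota \colon I \otimes I \to I \otimes \sO_{\cC} \cong I$, and, by commutativity, equally as $\iota \otimes 1$. Applying the strong monoidal functor $f^*$, the induced multiplication $\mu \colon f^*(I) \otimes f^*(I) \to f^*(I)$ therefore coincides with both $\pi \otimes 1$ and $1 \otimes \pi$ under the unit isomorphisms. In particular, $\mu$ is itself an epimorphism.

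Next, tensoring the short exact sequence $0 \to K \to f^*(I) \xrightarrow{\pi} \sO_{\cD} \to 0$ with $f^*(I)$ on each side, right exactness shows that $\ker \mu$ is simultaneously the image of $K \otimes f^*(I) \to f^*(I) \otimes f^*(I)$ and of $f^*(I) \otimes K \to f^*(I) \otimes f^*(I)$. A diagram chase exploiting this double description, together with the surjectivity of $\pi$ and the symmetry of $\otimes$, should produce a section of $\pi$---or, equivalently, identify $K$ with $K \otimes f^*(\sO_{\cC}/I) \cong 0$---forcing $K = 0$.

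The main obstacle will be carrying out this last step cleanly: constructing the explicit splitting of $\pi$, or the identification $K \cong K \otimes f^*(\sO_{\cC}/I)$, requires careful use of the braiding and unit constraints and closely parallels the argument in~\cite[Lem.~3.3.2]{MR3144607}.
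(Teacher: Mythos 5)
Your setup is right and your route is genuinely different from the paper's, but as written the proof is incomplete at exactly the point you flag: the ``diagram chase'' is the whole content of the lemma, and your two proposed completions are not equally viable. I do not see how to extract a section of $\pi$ from the double description of $\ker\mu$; what does work is your second alternative, and it closes in a few lines from the ingredients you already have. Write $J=f^*(I)$, $\pi=f^*(\iota)\colon J\to\sO_{\cD}$, $K=\ker\pi$ with inclusion $\kappa\colon K\to J$, and $\mu=f^*(m)\colon J\otimes J\to J$. As you observe, $\mu$ equals both $\rho_J\circ(1_J\otimes\pi)$ and $\lambda_J\circ(\pi\otimes 1_J)$, where $\rho$ and $\lambda$ denote the unit isomorphisms. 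Consider $c:=\mu\circ(\kappa\otimes 1_J)\colon K\otimes J\to J$. The second factorization gives $c=\lambda_J\circ\bigl((\pi\kappa)\otimes 1_J\bigr)=0$. The first factorization, together with bifunctoriality of $\otimes$ and naturality of the unitor, gives $c=\kappa\circ\rho_K\circ(1_K\otimes\pi)$. Since $\kappa$ is a monomorphism, the map $1_K\otimes\pi\colon K\otimes J\to K\otimes\sO_{\cD}\cong K$ is zero. Now tensor the right exact sequence $J\xrightarrow{\pi}\sO_{\cD}\to f^*(\sO_{\cC}/I)\to 0$ with $K$; since the tensor product of a Grothendieck abelian tensor category is cocontinuous, hence right exact, in each variable, the resulting sequence $K\otimes J\to K\to K\otimes f^*(\sO_{\cC}/I)\to 0$ is exact with zero first map, so $K\cong K\otimes f^*(\sO_{\cC}/I)\cong 0$.

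With that step supplied, your argument is a correct alternative to the paper's. The paper uses the same two factorizations of $\mu$ but then dualizes: the epimorphism $\pi$ induces a monomorphism $J\iso\sHom_{\cD}(\sO_{\cD},J)\inj\sHom_{\cD}(J,J)$ whose restriction to $K$ is the adjoint of $c=0$, forcing $\kappa=0$ and hence $K=0$. That version trades right exactness of $\otimes$ for the existence of the internal Hom (both are available here, the latter by the Special Adjoint Functor Theorem); your version stays entirely on the tensor side and is, if anything, slightly more elementary. Either way, the step you left as ``a diagram chase'' is precisely where the ideal structure of $I$ gets used, so it must be written out rather than asserted.
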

\begin{proof}
Since $f^*$ is right-exact and $f^*(\sO_{\cC}/I)=0$, it follows that $f^*(I)\to
f^*(\sO_{\cC})=\sO_{\cD}$ is surjective. Let $J=f^*(I)$ and let
$\varphi\colon J\to \sO_{\cD}$ denote the surjection. The multiplication
$I\otimes_{\cC} I\to I$ factors through $I\otimes_{\cC} \sO_{\cC}$ and
$\sO_{\cC}\otimes_{\cC} I$ and gives rise to the commutative diagram
\[
\xymatrix@C+5mm{%
J\otimes_\cD J\ar@{->>}[r]^-{\id{J}\otimes \varphi}\ar@{->>}[d]_{\varphi\otimes \id{J}} & J\otimes_\cD \sO_{\cD}\ar[d]^{\iso}\\
\sO_{\cD}\otimes_\cD J\ar[r]^{\iso} & J
}
\]
Let $\eta_F$ denote
the unit of the adjunction between $-\otimes_{\cD} F$ and $\sHom_{\cD}(F,-)$.
Then we obtain the commutative diagram
\[
\xymatrix@C+10mm{%
J\ar[r]^-{\eta_J(J)}\ar@{->>}[d]_{\varphi} & \sHom_\cD(J,J\otimes J)\ar[r]^-{\sHom(-,\id{J}\otimes \varphi)}\ar[d]^{\sHom(-,\varphi\otimes\id{J})} & \sHom_\cD(J,J\otimes \sO_{\cD})\ar[d]^{\iso}\\
\sO_\cD\ar[r]^-{\eta_J(\sO_{\cD})} & \sHom_\cD(J,\sO_{\cD}\otimes J)\ar[r]^{\iso} & \sHom_\cD(J,J).
}
\]
But the top row also factors as
\[
\xymatrix@C+5mm{%
J\ar[r]^-{\eta_{\sO_\cD}(J)}
 & \sHom_\cD(\sO_{\cD},J\otimes \sO_{\cD})\ar[r]^{\sHom(\varphi,-)}
 & \sHom_\cD(J,J\otimes \sO_{\cD})
}
\]
which is injective since $\eta_{\sO_\cD}$ is an isomorphism and
$\varphi$ is surjective. It follows
that $J\to \sHom_\cD(J,J)$ is injective, hence so is $\varphi\colon J\to \sO_{\cD}$.
\end{proof}

\begin{proof}[Proof of Theorem~\ref{T:localization-factorization}]
  If $K \in \cC$, since $\cC$ is locally finitely generated, it may be written
  as a directed colimit $K = \varinjlim_\lambda K_\lambda$, where $K_\lambda
  \subseteq K$ and $K_\lambda$ is finitely generated. If $K \in \ker (q^*)$,
  then $q^*K_\lambda \subseteq q^*K \cong 0$. In particular, $\cK:=\ker(q^*)
  \subseteq
  \ker(f^*)$.

  Let $0\to K\to M\to N\to Q\to 0$ be an exact sequence in
  $\cC$ with $K,Q\in \cK$. We have to prove that $f^*(M\to N)$ is an
  isomorphism in $\cD$. Let $N_0$ be the image of $M$ in $N$.  By
  right-exactness, we have an exact sequence $f^*(K)\to f^*(M)\to
  f^*(N_0)\to 0$.  Since $f^*(K)=0$, we have that
  $f^*(M)=f^*(N_0)$. We may thus replace $M$ with $N_0$ and assume
  that $K=0$ and $M\to N$ is injective.

  Write $N$ as the directed colimit of finitely generated subobjects
  $N^\circ_\lambda\subseteq N$. Let
  $N_\lambda=M+N^\circ_\lambda\subseteq N$ and
  $I_\lambda=\Ann(N_\lambda/M)$.  By definition,
  we have that $I_\lambda\otimes N_\lambda/M\to N_\lambda/M$ is zero;
  hence $I_\lambda\otimes N_\lambda\to N_\lambda$ factors through
  $M$.

  Note that $N_\lambda/M=N^\circ_\lambda/(N^\circ_\lambda\cap M)$
  is a quotient of a finitely generated object and a subobject of
  $Q$, so $\sO_{\cC}/I_\lambda \in \cK$ since $q^*$ is supported.
  We conclude that
  $f^*(I_\lambda)\to f^*(\sO_{\cC})$ is an isomorphism using
  Lemma~\ref{L:ideal-iso}.
  Now consider the commutative diagrams:
  \[
  \vcenter{\xymatrix{
      I_\lambda\otimes M\ar[r]\ar[d] & M\ar[d] \\
      I_\lambda\otimes N_\lambda\ar[r]\ar[ur] & N_\lambda }}\text{ and
  }\vcenter{\xymatrix{
      f^*(M)\ar[r]^{\iso}\ar[d] & f^*(M)\ar[d] \\
      f^*(N_\lambda)\ar[r]^{\iso}\ar[ur] & f^*(N_\lambda), }}
  \]
  where the right diagram is obtained by applying $f^*$ to the left
  diagram.  It follows that $f^*(M)\to f^*(N_\lambda)$ is an
  isomorphism. Since $f^*$ is cocontinuous, it follows that $f^*(M)\to
  f^*(N)=\varinjlim f^*(N_\lambda)$ is an isomorphism.

  This proves that $f^*=g^*q^*$ where $g^*=f^*q_*$. It is readily verified that
  $g^*$ is cocontinuous (it preserves small direct sums and is right-exact).
  If $M\in \cQ$ is a finitely generated object, then we may find a
  finitely generated object $N\in \cC$ such that $M=q^*N$. Indeed,
  by assumption $q_*M$ is a filtered colimit of finitely generated objects.
  It follows that there is a finitely generated subobject $N\subseteq q_*M$
  such that $q^*N\to M$ is an isomorphism. If $f^*$ preserves finitely
  generated objects, then $g^*M=f^*N$ is finitely generated.
\end{proof}

To show how powerful tensor localization is, we can quickly prove that tensoriality is local for the Zariski topology---even for stacks. 
\begin{theorem}\label{T:scheme-tensorial}
Let $X$ be a quasi-compact and quasi-separated algebraic stack. Let
$X=\bigcup_{k=1}^n X_k$ be an open covering by quasi-compact open substacks. If
every $X_k$ is tensorial, then so is $X$.
\end{theorem}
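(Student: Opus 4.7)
The plan is to induct on $n$. The case $n=1$ is trivial, and for $n\geq 2$ the quasi-compact open substack $X_1\cup\cdots\cup X_{n-1}$ is covered by $n-1$ tensorial quasi-compact opens, hence is tensorial by induction; so we may reduce to $n=2$ and assume $X=U\cup V$ with $U$, $V$ quasi-compact open substacks, both tensorial. Fix any algebraic stack $T$ and any cocontinuous tensor functor $f^*\colon\QCoh(X)\to\QCoh(T)$. The strategy is to construct a Zariski cover $T=T_U\cup T_V$ on which $f^*$ factors through $j_U^*\colon\QCoh(X)\to\QCoh(U)$ and $j_V^*\colon\QCoh(X)\to\QCoh(V)$ respectively, to realize each factored functor via tensoriality of $U$ and $V$, and to glue over $T_U\cap T_V$ using tensoriality of $U\cap V$.

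Choose finitely generated quasi-coherent ideals $I_U, I_V\subseteq\sO_X$ cutting out the closed complements of $U$ and $V$ (possible because $X$ is quasi-compact and quasi-separated and $U$, $V$ are quasi-compact opens). Let $J_U\subseteq\sO_T$ be the kernel of the unital $\sO_T$-algebra map $\sO_T\to f^*(\sO_X/I_U)$ and set $T_U:=T\setminus\mathrm{Supp}(\sO_T/J_U)$, an open substack (the support is closed because $\sO_T/J_U$ is cyclic); $T_U$ is exactly the open locus on which the algebra $f^*(\sO_X/I_U)$ vanishes. Define $T_V$ analogously. Since $U\cup V=X$, the vanishing loci of $I_U$ and $I_V$ are disjoint, so $I_U+I_V=\sO_X$ and $(\sO_X/I_U)\otimes_{\sO_X}(\sO_X/I_V)\cong\sO_X/(I_U+I_V)=0$; applying the tensor functor $f^*$ gives $f^*(\sO_X/I_U)\otimes_{\sO_T}f^*(\sO_X/I_V)=0$, so their supports in $|T|$ are disjoint and $T_U\cup T_V=T$. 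To factor the restriction $\QCoh(X)\to\QCoh(T_U)$ through $j_U^*$ it suffices, by Theorem~\ref{T:localization-factorization} (applicable since $j_U^*$ is a supported tensor localization by Example~\ref{E:qcohstk-localization-factorization} and $\QCoh(X)$ is locally finitely generated by Example~\ref{E:stk_fg_description}), to show $f^*(K)|_{T_U}=0$ for every finitely generated $K\in\ker j_U^*$. Since $I_U$ is finitely generated and $X$ is quasi-compact, every such $K$ is annihilated by some power $I_U^N$ (checked locally on a finite smooth affine atlas), hence is an $\sO_X/I_U^N$-module. Lemma~\ref{L:ideal-iso} applied to the cocontinuous tensor functor $\QCoh(X)\to\QCoh(T_U)$ and the ideal $I_U$ shows that the natural map $f^*(I_U)|_{T_U}\to\sO_{T_U}$ is an isomorphism; taking $N$-fold tensor products, composing with the multiplication $I_U^{\otimes N}\twoheadrightarrow I_U^N$, and invoking right-exactness of $f^*$ on $I_U^N\to\sO_X\to\sO_X/I_U^N\to 0$ forces $f^*(\sO_X/I_U^N)|_{T_U}=0$; hence $f^*(K)|_{T_U}=0$ as a module over this zero algebra.

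By tensoriality of $U$, the factored functor is represented by a morphism $g_U\colon T_U\to U$, and similarly we obtain $g_V\colon T_V\to V$. On $T_U\cap T_V$ the identity $g_U^*(\sO_X/I_V)|_{T_U\cap T_V}=f^*(\sO_X/I_V)|_{T_U\cap T_V}=0$ forces $g_U|_{T_U\cap T_V}$ to land in $V$, hence in $U\cap V$, and symmetrically for $g_V$. Since $U\cap V$ is a quasi-compact open of the tensorial stack $U$, it is tensorial by Lemma~\ref{L:qaff-tensorial:rel}; the essential uniqueness of the factorization then supplies a canonical $2$-isomorphism $g_U|_{T_U\cap T_V}\simeq g_V|_{T_U\cap T_V}$, which is the gluing data for a morphism $g\colon T\to X$ with $g^*\cong f^*$. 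Full faithfulness proceeds in parallel: for morphisms $f_1, f_2\colon T\to X$ and a natural transformation $\tau\colon f_1^*\Rightarrow f_2^*$, the induced unital algebra map $f_1^*(\sO_X/I_U)\to f_2^*(\sO_X/I_U)$ forces $f_1^{-1}(U)\subseteq f_2^{-1}(U)$ (stalkwise: $1=0$ in the source stalk implies $1=0$ in the target), so on this open both $f_i$ factor through $U$ and tensoriality of $U$ realizes the restricted $\tau$ by a $2$-morphism, glued across $U\cap V$. The hard part will be the factorization step, because the passage from $f^*(\sO_X/I_U)|_{T_U}=0$ to $f^*(K)|_{T_U}=0$ for every finitely generated $K\in\ker j_U^*$ silently combines quasi-compactness and quasi-separatedness of $X$ (to choose a finitely generated $I_U$ and to extract a uniform $N$), the consequence of Lemma~\ref{L:ideal-iso}, and Theorem~\ref{T:localization-factorization}.
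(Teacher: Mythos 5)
Your proof is correct and follows essentially the same route as the paper's: cut out the complements by finitely generated ideals $I_k$, define $T_k$ as the non-vanishing locus of the surjection $\sO_T\to f^*(\sO_X/I_k)$, factor through $\QCoh(X_k)$ via Theorem~\ref{T:localization-factorization} together with Lemma~\ref{L:ideal-iso}, and conclude by gluing. The only deviations are cosmetic --- you reduce to $n=2$ by induction and glue over $U\cap V$ using its tensoriality instead of first proving full faithfulness of $\TF_X$ and then citing descent, and you usefully spell out the step ``$I_k^mK=0$ implies $j_k^*f^*K=0$'' that the paper leaves terse.
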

\begin{proof}
Let $j_k\colon X_k\to X$ denote the open immersion and let $I_k$ be an ideal
of finite type defining a closed substack complementary to
$X_k$~\cite[Prop.~8.2]{rydh-2014}.

Let $T$ be an algebraic stack. First we will show that $\TF{X}(T)$ is fully faithful.
Thus, let $f,g\colon T\to X$ be two morphisms and suppose that
we are given a natural transformation of cocontinuous tensor functors $\gamma\colon f^* \Rightarrow g^*$. Then
$f^*(\sO_X/I_k) \surj g^*(\sO_X/I_k)$ so there is an inclusion $f^{-1}(X_k)\subseteq g^{-1}(X_k)$
for every $k$. Let $T_k=f^{-1}(X_k)$, let $j_{k,T}\colon T_k\to T$ denote the
corresponding open immersion and let $f_k,g_k\colon T_k\to X_k$ denote the
restrictions of $f$ and $g$. Since
$(f_k)^*=j_{k,T}^*f^*(j_k)_*$ and $(g_k)^*=j_{k,T}^*g^*(j_k)_*$, we obtain
a natural transformation $\gamma_k\colon f_k^* \Rightarrow g_k^*$, hence a
unique $2$-isomorphism $f_k\Rightarrow g_k$. Since $T=\bigcup_{k=1}^N T_k$, it
follows by fppf-descent, that $\TF{X}(T)$ is faithful (Lemma~\ref{L:fppf_local_tannakian}\ref{LI:fppf_local_tannakian:faith}). As this holds for all $T$,
we also have that $\TF{X}(T_k\cap T_{k'})$ is faithful and it follows by
fppf-descent that $\TF{X}(T)$ is full
(Lemma~\ref{L:fppf_local_tannakian}\ref{LI:fppf_local_tannakian:ff}).

For essential surjectivity, let $f^*\colon \QCoh(X)\to \QCoh(T)$ be a
cocontinuous
tensor functor. The surjection $\sO_T\surj f^*(\sO_X/I_k)$ defines a closed
subscheme and we let $j_{k,T}\colon T_k\to T$ denote its open complement. By Theorem
\ref{T:tensor-loc-for-algebraic-stacks}\ref{TI:tensor-loc-for-algebraic-stacks:open}, 
$j_{k,T}^*f^*$ factors via $j_k^*$ and a tensor functor $f_{k}^* \colon \QCoh(X_k)\to 
\QCoh(T_k)$. The latter is
algebraic by assumption; hence, so is $j_{k,T}^*f^*=f_k^*j_k^*$.

Finally, since $\sO_X/I_1\otimes \dots \otimes \sO_X/I_n=0$, it follows
that $f^*(\sO_X/I_1)\otimes \dots \otimes f^*(\sO_X/I_n)=0$ so
$T=\bigcup_{k=1}^n T_k$ is an open covering. We conclude that $f^*$ is
algebraic by fppf descent
(Lemma~\ref{L:fppf_local_tannakian}\ref{LI:fppf_local_tannakian:alg}).
\end{proof}
Combining Theorem \ref{T:scheme-tensorial} with Lemma \ref{L:qaff-tensorial:abs}
we obtain a short proof of the main result of \cite{MR3144607}.
\begin{corollary}[Brandenburg--Chirvasitu]
Every quasi-compact and quasi-separated scheme is tensorial.
\end{corollary}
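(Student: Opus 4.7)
The plan is to deduce this corollary directly by combining the two cited results: tensoriality of affine (more generally quasi-affine) schemes from Lemma~\ref{L:qaff-tensorial:abs}, together with the Zariski-local nature of tensoriality for quasi-compact and quasi-separated algebraic stacks established in Theorem~\ref{T:scheme-tensorial}.

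Concretely, let $X$ be a quasi-compact and quasi-separated scheme. By quasi-compactness, I would choose a finite cover $X=\bigcup_{k=1}^n U_k$ by affine open subschemes $U_k$. Since $X$ is quasi-separated, each inclusion $U_k \hookrightarrow X$ is quasi-compact, so this is an open covering by quasi-compact open substacks in the sense required by Theorem~\ref{T:scheme-tensorial}. Each $U_k$ is affine, hence quasi-affine, and therefore tensorial by Lemma~\ref{L:qaff-tensorial:abs}.

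Having verified the hypotheses, I would simply invoke Theorem~\ref{T:scheme-tensorial} to conclude that $X$ itself is tensorial. There is essentially no obstacle here once the preceding two results are in hand; all of the substantive work, notably the tensor-localization argument using Theorem~\ref{T:localization-factorization} to glue along intersections, is already packaged inside Theorem~\ref{T:scheme-tensorial}. The only delicate point worth checking explicitly is the quasi-compactness of the open immersions $U_k \hookrightarrow X$, which follows immediately from quasi-separatedness of $X$.
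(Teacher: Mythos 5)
Your proposal is correct and is exactly the argument the paper intends: the corollary is stated as an immediate combination of Theorem~\ref{T:scheme-tensorial} with Lemma~\ref{L:qaff-tensorial:abs}, applied to a finite affine open cover whose members are quasi-compact open subschemes by quasi-separatedness. Nothing further is needed.
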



\section{The Main Lemma}\label{S:main-lemma}
The main result of this section is the following technical lemma, which proves that the tensorial property extends over nilpotent thickenings of quasi-compact algebraic stacks with affine stabilizers having the resolution property.
\begin{lemma}[Main Lemma]\label{L:funny_nil_res}
  Let $i\colon X_0 \to X$ be a closed immersion of algebraic stacks
  defined by a quasi-coherent ideal $I$ such that $I^n = 0$ for some
  integer $n>0$. Suppose that $X_0$ is quasi-compact and quasi-separated with
  affine stabilizers. If $X_0$ has the resolution property, then $X$
  is tensorial.
\end{lemma}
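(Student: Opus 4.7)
The plan is to mimic the proof of Theorem~\ref{T:resprop-tensorial} by producing a quasi-affine morphism from $X$ to $\BGL_{N,\Z}$ and then transferring tensoriality along it. Note first that $X$ itself is quasi-compact and quasi-separated with affine stabilizers: these properties are inherited from $X_0$ because the closed immersion $i\colon X_0\hookrightarrow X$ is a universal homeomorphism (since $I$ is nilpotent).

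By the Gross--Totaro theorem applied to $X_0$, there is a quasi-affine morphism $g_0\colon X_0 \to \BGL_{N,\Z}$ for some integer $N$. The central step is to lift $g_0$ to a quasi-affine morphism $g\colon X \to \BGL_{N,\Z}$. Concretely, one must lift the universal rank-$N$ vector bundle on $X_0$ classified by $g_0$ across the nilpotent thickening, together with the quasi-affine factor of $g_0$ (the open immersion into the frame-bundle presentation). One can induct on the nilpotency index $n$ by refining $X_0\hookrightarrow X$ through the chain of closed immersions $V(I)\hookrightarrow V(I^2)\hookrightarrow\cdots\hookrightarrow V(I^n)=X$; because $I^{2k}\subseteq I^{k+1}$ whenever $k\ge 1$, each step is a square-zero extension, so the induction reduces the problem to the case $I^2=0$, where the deformation-theoretic content is cleanest.

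This lifting is the main obstacle. It is not a general property of nilpotent thickenings---Question~\ref{Q:deformation-of-resprop} remains open in full generality---and the proof must use essentially the fact that the morphism to be lifted comes from a Gross--Totaro-style quasi-affine presentation of a stack with the resolution property. I would therefore invoke Lemma~\ref{L:res_strange}, the special case of Question~\ref{Q:deformation-of-resprop} established in the paper, to supply the desired lift $g\colon X\to\BGL_{N,\Z}$ together with its quasi-affineness. With such a $g$ in hand, the Main Lemma follows at once: $\BGL_{N,\Z}$ is tensorial by Theorem~\ref{T:resprop-tensorial} (it is quasi-compact with affine diagonal and the resolution property), and Lemma~\ref{L:qaff-tensorial:rel} transfers tensoriality (both fully faithfulness and essential surjectivity of $\TF_X(T)$ for every algebraic stack $T$) along the quasi-affine morphism $g$ from $\BGL_{N,\Z}$ to $X$.
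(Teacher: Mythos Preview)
Your proposal has a genuine gap at the step where you invoke Lemma~\ref{L:res_strange} to ``supply the desired lift $g\colon X\to\BGL_{N,\Z}$.'' That lemma does \emph{not} say that an arbitrary nilpotent thickening of a stack with the resolution property again has the resolution property; this is precisely the open Question~\ref{Q:deformation-of-resprop}, as you yourself note. Lemma~\ref{L:res_strange} applies only in the very special situation where $X$ is presented as a $2$-cocartesian square (a geometric pushout) $X=U\amalg_{U_0} X_0$ with $U_0$ an affine scheme. In the hypotheses of the Main Lemma no such pushout description of $X$ is given, and there is no way to manufacture one: if $p\colon U\to X$ is any smooth affine cover and $U_0=U\times_X X_0$, the pushout $U\amalg_{U_0} X_0$ is in general a \emph{different} thickening of $X_0$ from $X$ (think of $X_0=B_kG$ and $X=B_{k[\epsilon]}G_\epsilon$ for a nontrivial deformation $G_\epsilon$). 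So your appeal to Lemma~\ref{L:res_strange} is circular---it presupposes exactly the structure whose existence is at issue.

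The paper's proof avoids this by \emph{not} attempting to show $X$ has the resolution property outright. Instead, given a cocontinuous tensor functor $f^*\colon\QCoh(X)\to\QCoh(T)$ with $T$ affine, it first uses tensoriality of $X_0$ to represent the restriction $f_0^*$, passes to an \'etale cover of $T$ so that a smooth affine presentation of $X$ acquires a section over $T_0$, and then replaces $X$ by $X'=\sSpec_X(f_*\sO_T)$. This replacement is the crucial move: after it, the filtration $X_0\hookrightarrow X_1\hookrightarrow\cdots\hookrightarrow X_{n-1}=X$ by $f_*(J^{l+1})$ has the feature that each $X_{l+1}$ admits a retraction onto a pushout $\tilde X_{l+1}$ of the form required by Lemma~\ref{L:res_strange} (this is where Claims~1 and~2 and the splitting of $Q_l\to(p_l)_*p_l^*Q_l$ are used). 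The induction then runs, and one concludes that $X'$---not the original $X$---has the resolution property, which suffices to represent $f^*$. In short, the proof uses the given tensor functor $f^*$ itself to create the pushout structure that Lemma~\ref{L:res_strange} needs; this is exactly the missing idea in your outline.
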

We have another lemma that will be crucial for proving Lemma
\ref{L:funny_nil_res}.
\begin{lemma}\label{L:res_strange}
  Consider a $2$-cocartesian diagram of algebraic stacks:
  \[
  \xymatrix{U_0 \ar[d]_{p_0} \ar@{^(->}[r]^i &  U \ar[d]^p \\ X_0 \ar@{^(->}[r]^j & X,}
  \]
  such that the following conditions are satisfied.
  \begin{enumerate}
  \item $i$ is a nilpotent closed immersion;
  \item $U_0$ is an affine scheme; and
  \item $X_0$ is quasi-compact and quasi-separated with affine stabilizers. 
  \end{enumerate}
  If $X_0$ has the resolution property, then so has $X$. 
\end{lemma}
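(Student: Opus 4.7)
The strategy is to invoke the Gross--Totaro theorem~\cite{2013arXiv1306.5418G} to rephrase the resolution property on $X_0$ as the existence of a rank $N$ vector bundle $V_0$ on $X_0$ with quasi-affine frame bundle $T_0 \to X_0$ (equivalently, $X_0 \simeq [T_0/\mathrm{GL}_N]$); and then to lift $V_0$ to a rank $N$ vector bundle $V$ on $X$ via the universal property of the pushout, verify that the associated frame bundle $T \to X$ remains quasi-affine, and conclude by Gross--Totaro in reverse that $X \simeq [T/\mathrm{GL}_N]$ has the resolution property.

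First I would show that $U$ is in fact an affine scheme. Since $i\colon U_0 \to U$ is a monomorphism, $U_0 \times_U U_0 = U_0$; combined with the surjectivity of $i$ on points, this forces every geometric stabilizer of $U$ to coincide with one of $U_0$ and hence to be trivial, so $U$ is an algebraic space. Filtering the defining ideal $I \subseteq \mathcal{O}_U$ by its powers, each successive quotient $I^k/I^{k+1}$ is a quasi-coherent $\mathcal{O}_{U_0}$-module on the affine $U_0$ and therefore has vanishing higher cohomology; by d\'evissage $H^p(U, \mathcal{O}_U) = 0$ for $p > 0$, and Serre's criterion for algebraic spaces implies that $U$ is affine.

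Next, the vector bundle $p_0^*V_0$ on the affine scheme $U_0$ corresponds to a finitely generated projective $\Gamma(U_0, \mathcal{O}_{U_0})$-module, which lifts through the surjection $\Gamma(U, \mathcal{O}_U) \twoheadrightarrow \Gamma(U_0, \mathcal{O}_{U_0})$ with nilpotent kernel to a finitely generated projective $\Gamma(U, \mathcal{O}_U)$-module; this produces a vector bundle $W$ on $U$ together with an isomorphism $i^*W \simeq p_0^*V_0$. Testing the universal property of the $2$-cocartesian square against the algebraic stack $\BGL_N$ identifies rank $N$ vector bundles on $X$ with the $2$-fibre product of such bundles on $X_0$ and on $U$ over those on $U_0$, so the triple $(V_0, W, i^*W \simeq p_0^*V_0)$ descends canonically to a rank $N$ vector bundle $V$ on $X$ with $j^*V \simeq V_0$ and $p^*V \simeq W$.

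Finally, let $T \to X$ denote the frame bundle of $V$ and set $T_0 = T \times_X X_0 = \mathrm{Frame}(V_0)$. Pulling back the nilpotent closed immersion $j\colon X_0 \hookrightarrow X$ along $T \to X$ gives a nilpotent closed immersion $T_0 \hookrightarrow T$ with $T_0$ a quasi-affine scheme; the argument of the first step, applied with $(T_0, T)$ in place of $(U_0, U)$, shows $T$ is an algebraic space, and quasi-affineness of algebraic spaces is preserved under nilpotent thickenings (both sides share the same underlying topological space, and the quasi-compact open embedding $T_0 \hookrightarrow \Spec \Gamma(T_0, \mathcal{O}_{T_0})$ lifts to $T \hookrightarrow \Spec \Gamma(T, \mathcal{O}_T)$ since the map on global sections is surjective with nilpotent kernel). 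Therefore $T$ is quasi-affine and $X \simeq [T/\mathrm{GL}_N]$ has the resolution property. The main obstacle I foresee is the bookkeeping in the first step (ensuring $U$ is an algebraic space and then affine) and propagating quasi-affineness through the nilpotent thickening $T_0 \hookrightarrow T$; the vector-bundle gluing itself is automatic from the universal property of the pushout.
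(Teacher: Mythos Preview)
Your approach is essentially the paper's: use Gross--Totaro to obtain a vector bundle $V_0$ on $X_0$ with quasi-affine frame bundle, glue it to a bundle $V$ on $X$, and conclude. The paper's execution differs only in that it first invokes the geometric-pushout machinery \cite[Prop.~A.2]{hallj_openness_coh} to establish that $j$ is a nilpotent closed immersion and $p$ is affine, then glues via the explicit Ferrand formula $V=p_*E\times_\alpha j_*V_0$ \cite[Thm.~2.2(iv)]{MR2044495}, and finally cites \cite[Prop.~5.7]{2013arXiv1306.5418G} (which says precisely that if $j^*V$ has quasi-affine frame bundle then so does $V$) rather than arguing directly about $T$. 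Your gluing via the universal property of the pushout against $\BGL_N$ is arguably cleaner than the Ferrand construction.

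There are, however, two genuine gaps you should patch. First, you use that $j\colon X_0\hookrightarrow X$ is a nilpotent closed immersion in the frame-bundle step, but this is not among the hypotheses---only $i$ is assumed nilpotent. It follows from the geometric-pushout property (using that $p_0$ is affine since $X_0$ has affine diagonal by Gross--Totaro), and you should cite this. Second, your argument that quasi-affineness passes to the thickening $T$ asserts that $\Gamma(T)\to\Gamma(T_0)$ is surjective with nilpotent kernel; but quasi-affine schemes can have non-vanishing $H^1$ (e.g., $\mathbb{A}^2\setminus\{0\}$), so the d\'evissage giving surjectivity does not go through as stated. The conclusion is nonetheless correct---quasi-affineness of an algebraic space is insensitive to nilpotent thickenings---but you should either justify it differently or, as the paper does, cite \cite[Prop.~5.7]{2013arXiv1306.5418G} directly. (A minor point: your Serre-criterion step checks only $H^p(U,\sO_U)=0$; the criterion requires this for all quasi-coherent sheaves, though the same d\'evissage works.)
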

\begin{proof}
  Note that $X_0$ has affine diagonal by the Totaro--Gross theorem; hence
  $p_0$ is affine.
  By \cite[Prop.~A.2]{hallj_openness_coh}, the square is a geometric
  pushout. In particular, $j$ is a nilpotent closed immersion, $p$ is
  affine, and the natural map $\sO_X \to p_*\sO_{U}
  \times_{p_*i_*\sO_{U_0}} j_*\sO_{X_0}$ is an isomorphism. By the
  Totaro--Gross Theorem \cite[Cor.~5.9]{2013arXiv1306.5418G}, there
  exists a vector bundle $V_0$ on $X_0$ such that the total space of
  the frame bundle of $V_0$ is quasi-affine. Let $E_0 = p_0^*V_0$;
  then, since $U_0$ is affine, there exists a vector bundle $E$ on $U$
  equipped with an isomorphism $\alpha\colon i^*E \to E_0$. Let $V$ be
  the quasi-coherent $\sO_X$-module $p_*E \times_\alpha j_*V_0$. By
  \cite[Thm.~2.2(iv)]{MR2044495}, $V$ is a vector bundle on $X$ and
  there is an isomorphism $j^*V \cong V_0$. By
  \cite[Prop.~5.7]{2013arXiv1306.5418G}, it follows that $X$ has the
  resolution property.  
\end{proof}
\begin{proof}[Proof of Lemma \ref{L:funny_nil_res}]
  We prove the result by induction on $n> 0$. The case $n=1$ is Theorem 
  \ref{T:resprop-tensorial}. So we let $n>1$ be an integer and we will assume that if 
  $W_0 \hookrightarrow W$ is any closed immersion of algebraic stacks defined by an ideal 
  $J$ such that $J^{n-1}=0$ and $W_0$ has the resolution property, then $W$ is 
  tensorial. We now fix a closed immersion of algebraic stacks $i\colon X_0 \to X$ defined 
  by an ideal $I$ such that $I^n = 0$ and $X_0$ has the resolution property. It remains to 
  prove that $X$ is tensorial. 
  
  We observe that
  the Totaro--Gross Theorem \cite[Cor.~5.9]{2013arXiv1306.5418G}
  implies that $X_0$ has affine diagonal; thus, $X$ has affine diagonal. We
  have seen that $\TF{X}(T)$ is fully faithful
  (Proposition \ref{P:quaff_full_faithful}) so it remains to prove that
  $\TF{X}(T)$ is essentially surjective. By descent,
  it suffices to prove that if $T$ is an affine scheme and $f^* \colon
  \QCoh(X) \to \QCoh(T)$ is a cocontinuous tensor functor, then there
  exists an \'etale and surjective morphism $c \colon T' \to T$
  such that $c^*f^*$ is algebraic
  (Lemma~\ref{L:fppf_local_tannakian}\ref{LI:fppf_local_tannakian:alg}).
  
  By Corollary~\ref{C:pb_affine_stack}, there is a $2$-cocartesian
  diagram in $\GATC$
  \[
  \xymatrix{\QCoh(T_0) & \QCoh(X_0) \ar[l]_{f_0^*} \\
    \QCoh(T) \ar[u]^{k^*} & \QCoh(X)\ar[u]_{i^*} \ar[l]_{f^*},}
  \]
  where $k\colon T_0\to T$ is the closed immersion defined by the image
  $K$ of $f^*I$
  in $\sO_T$. In particular, $K^n = 0$. Since $X_0$ has the resolution property, $f_0^*$ is
  given by a morphism of algebraic stacks $f_0 \colon T_0 \to
  X_0$ (Theorem~\ref{T:resprop-tensorial}).
  
  Let $p\colon U \to X$ be a smooth and surjective morphism, where $U$
  is an affine scheme; then, $p$ is affine. The pullback of $p$ along
  the morphism $i\circ f_0 \colon T_0 \to X$ results in a smooth and
  affine surjective morphism of schemes $q_0 \colon V_0 \to T_0$. By
  \cite[IV.17.16.3(ii)]{EGA}, there exists an affine \'etale and
  surjective morphism $c_0 \colon T'_0 \to T_0$ such that the pullback
  $q_0' \colon V_0' \to T_0'$ of $q_0$ to $T_0'$ admits a section. By
  \cite[IV.18.1.2]{EGA}, there exists a unique affine \'etale morphism
  $c\colon T' \to T$ lifting $c_0\colon T'_0 \to T_0$. After replacing
  $T$ with $T'$ and $f^*$ with $c^*f^*$, we may thus assume that
  $q_0$ admits a section (Lemma 
\ref{L:fppf_local_tannakian}\ref{LI:fppf_local_tannakian:alg}).

  Let $X'=\sSpec_{X}(f_*\sO_{T})$. Let $I'=I(f_*\sO_{T})$
  be the $\sO_{X'}$-ideal generated by $I$ and let $X_0'=V(I')$. Then
  $X'$ is a quasi-compact stack with affine diagonal,
  $X'_0 \to X'$ is a closed immersion defined by an ideal whose $n$th
  power vanishes and $X_0'$ has the resolution property.
  Let $f'^* =
  \bar{f}^* \colon \QCoh(X') \to \QCoh(T)$ be the resulting
  tensor functor.

  Since $f'^*$ is right-exact, it follows that $K=\mathrm{im}(f'^*I' \to
  \sO_{T})$. Also, $I' \subseteq f'_*K \subseteq \sO_{X'}$.
  Thus $V(f'_*K) \subseteq X_0'$, so has the resolution property. Note that
  $f'^*I'\to f'^*f'_*K\to K$ is surjective.
  Since $f'_*$ is lax symmetric monoidal, for each integer $l\geq 1$ the morphism $(f'_*K)^{\otimes l} \to \sO_{X'}$ factors through $f'_*(K^{\otimes l}) \to \sO_{X'}$. In particular, $(f'_*(K^{l}))^2 \subseteq f'_*(K^{l+1})$ and $(f'_*K)^n =
  0$. We may thus replace $X$ by $X'$, $X_0$ by $V(f'_*K)$,
  $f^*$ by $f'^*$, $I$ by $f_*K$ and assume henceforth that
  \begin{enumerate}
    \item $\sO_X \to f_*\sO_T$ is an isomorphism,
    \item $I=f_*K$ for some $\sO_T$-ideal $K$ with $K^n = 0$,
    \item $f_*(K^l)^2 \subseteq f_*(K^{l+1})$ for each integer $l\geq 1$, 
    \item $(f_*K)^{l} \subseteq f_*(K^l)$ for $l\geq 1$, and
    \item $q_0 \colon V_0 \to T_0$ admits a section.
  \end{enumerate}
  
  For each integer $l\geq 0$ let $I_l = f_*(K^{l+1})$, which is a quasi-coherent sheaf of 
  ideals on $X$. Let $i_l \colon X_l \to X$ be the closed
  immersion defined by $I_l$ and let $k_{l} \colon T_l \to T$ be
  the closed immersion defined by $K^{l+1}$. Since
  $f^*f_*(K^{l+1}) \to f^*\sO_X=\sO_T$ factors through $K^{l+1}$,
  it follows that $k_{l}^*f^*(i_l)_*(\sO_{X_l})=\sO_{T_l}$. Hence,
  $f_l^*=k_{l}^*f^*(i_l)_*\colon \QCoh(X_l) \to \QCoh(T_l)$
  is a tensor functor and $k_l^*f^* \simeq f_l^*(i_l)^*$ (Theorem \ref{T:tensor-loc-for-algebraic-stacks}\ref{TI:tensor-loc-for-algebraic-stacks:closed}).

  By condition (iv), we see that $i_l \colon X_0 \to X_l$ is a closed immersion of algebraic 
  stacks defined by an ideal whose $(l+1)$th power is zero. In particular, if $l<n-1$, then 
  $X_l$ is tensorial by the inductive hypothesis. Thus, the tensor functor $f_l^*$ is 
  given by an affine morphism $f_l \colon T_l \to X_l$. 

  We will now
  prove by induction on $l\geq 0$ that $X_l$ has the resolution
  property. Since $X_{n-1} = X$, the result will then follow from Theorem 
  \ref{T:resprop-tensorial}. Note that (iii) implies that the closed immersion $X_l \to 
  X_{l+1}$ is a square zero extension of $X_l$ by $I_l/I_{l+1}$. Let $m=n-2$.
  \\*[2mm]
  \emph{Claim 1.} If $M \in \QCoh(T_m)$, the natural map $f_*(k_m)_*M \to p_*p^*f_*(k_m)_*M$ is split injective.\\*[1mm]
  \emph{Proof of Claim 1.} Form the 
  cartesian diagram of algebraic stacks:
  \[
  \xymatrix{V_0 \ar[d]_{q_0} \ar[r] & V_m \ar[d]_{q_m} \ar[r]^{g_m}
  & U_m \ar[d]_{p_m} \ar[r]^{u_m} & U \ar[d]^p \\ T_0 \ar[r] & T_m
  \ar[r]^{f_m} & X_m \ar[r]^{i_m} & X.}
  \]
  Now observe that $f_*(k_m)_*M \cong (i_m)_*(f_m)_*M$.  Since $f_m^*$ is given by a 
  morphism $f_m\colon T_m \to X_m$, there are natural isomorphisms:
  \begin{align*}
    p_*p^*f_*(k_m)_*M &\cong p_*p^*(i_m)_*(f_m)_*M \cong p_*(u_m)_*p_m^*(f_m)_*M\\
                      &\cong p_*(u_m)_*(g_m)_*q_m^*M \cong (i_m)_*(f_m)_*(q_m)_*q_m^*M.
  \end{align*}
  Hence, it remains to prove that the natural map $M \to (q_m)_*q_m^*M$ is split injective. 
  But $q_m$ is affine, so $(q_m)_*q_m^*M \cong 
  (q_m)_*\sO_{V_m}\otimes_{\sO_{T_m}}M$. Thus, we are reduced to proving that $\sO_{T_m} \to 
  (q_m)_*\sO_{V_m}$ is split injective. By (v), $q_0$ admits a section. Since
  $q_m$ is smooth and $T_m$ is affine, the section that $q_0$ admits
  lifts to a section of $q_m$. This implies that the morphism $\sO_{T_m}
  \to (q_m)_*\sO_{V_m}$ is split injective. $\triangle$\\*[2mm]
  \emph{Claim 2.} If $0\leq l<n-1$, then the natural maps $I_l/I_{l+1} \to 
  p_*p^*(I_l/I_{l+1})$ are split injective.\\*[1mm]
  \emph{Proof of Claim 2.} If $N\in \QCoh(T_m)$, then 
  $f_*(k_m)_*N=(i_m)_*(f_m)_*N$. Since $f_m$ is an affine morphism, it follows that $f_*(k_m)_* \colon \QCoh(T_m) \to \QCoh(X)$ is exact. If $P$ is one of the modules $K^{l+1}$, $K^{l+2}$, or 
  $K^{l+1}/K^{l+2}$, then $K^{m+1}P=0$, so the natural map $P \to (k_m)_*k_m^*P$ is an isomorphism. In particular, $f_*P \cong f_*(k_m)_*k_m^*P$. Hence, $I_l/I_{l+1} \cong f_*(k_m)_*k_m^*(K^{l+1}/K^{l+2})$ and the claim now follows from Claim 1. $\triangle$
  \\*[2mm]

  So we let $l\geq 0$ be an integer, which we assume to be $<n-1$. We will assume that $X_l$ has the resolution 
  property and we will now prove that $X_{l+1}$ has the resolution property. 
  Retaining the notation of Claim 1, there is a $2$-commutative
  diagram of algebraic stacks:
  \[
  \xymatrix@R=5mm{U_{l} \ar[dd]^{p_l} \ar[rr] & & U_{l+1} \ar[d] \\ & &
    \tilde{X}_{l+1} \ar[d] \\ X_l \ar[rr] \ar[urr] & & X_{l+1},  }
  \]
  where both the inner and outer squares are $2$-cartesian and the
  inner square is $2$-cocartesian. Let $Q_l = I_{l}/I_{l+1}$. The morphism $X_l \to
  \tilde{X}_{l+1}$ is a square zero extension of $X_l$ by
  $(p_l)_*p_l^*Q_l\cong p_*p^*Q_l$ and the morphism $\tilde{X}_{l+1} \to X_{l+1}$ is
  the morphism of $X_l$-extensions given by the natural map $Q_l \to
  (p_l)_*p_l^*Q_l$. By Claim 2, the morphism $Q_l \to (p_l)_*p_l^*Q_l$
  is split injective and so there is an induced splitting $X_{l+1}
  \to \tilde{X}_{l+1}$ which is affine. By
  \cite[Prop.~4.3(i)]{2013arXiv1306.5418G}, it remains to prove that
  $\tilde{X}_{l+1}$ has the resolution property, which is just
  Lemma \ref{L:res_strange}.
\end{proof}

\section{Formal gluings}\label{S:formal-gluings}
  Let $T$ be an algebraic stack, let $i \colon Z \inj T$ be a finitely presented closed
  immersion and let $j\colon U \to T$ denote its complement. A \emph{flat
    Mayer--Vietoris square} is a cartesian square of algebraic stacks
  \[
  \xymatrix{
    U'\ar[r]^{j'}\ar[d]_{\pi_U} & T'\ar[d]^\pi \\
    U\ar[r]^j & T\ar@{}[ul]|\square
  }
  \]
  such that $\pi$ is flat and $\pi|_Z$ is an
  isomorphism~\cite{MR1432058,mayer-vietoris}. If $F\colon \AlgSt^\op\to
  \Cat$ is a pseudo-functor, then there is a natural functor:
  \[
    \Phi_F\colon F(T)\to F(T')\times_{F(U')} F(U).
  \]
  Here $\AlgSt$ denotes the $2$-category of algebraic stacks. For the
  purposes of this paper, it is enough to consider pseudo-functors
  defined on affine schemes, that is, fibered categories over affine
  schemes. Indeed, our Mayer--Vietoris squares will be \emph{formal gluings}:
  $T=\Spec A$ is affine and noetherian, $Z=V(I)$, and
  $T'=\Spec \widehat{A}$, where $\widehat{A}$ is the $I$-adic completion
  of $A$.

  The following theorem follows from the main results of \cite{mayer-vietoris}
  (and almost from~\cite{MR1432058}).
  \begin{theorem}\label{T:formal-gluings}
    Consider a flat Mayer--Vietoris square as above.
    Let $X$ be an algebraic stack and consider the pseudo-functor
    $X_{\otimes}(-)=\Hom_{\otimes}(\QCoh(X),\QCoh(-))$ on the
    category of algebraic spaces.
    \begin{enumerate}
      \item $\Phi_{X_\otimes}$ is an equivalence of categories;
      \item $\Phi_X$ is fully faithful;
      \item $\Phi_X$ is an equivalence if $\Delta_X$ is quasi-affine;
      \item $\Phi_X$ is an equivalence if $X$ is Deligne--Mumford; and
      \item $\Phi_X$ is an equivalence if $T$ is locally excellent.
    \end{enumerate}
  \end{theorem}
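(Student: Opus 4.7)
The plan is to derive all four statements from \cite{MR1432058} combined with standard descent theory. The central input is the Moret--Bailly formal gluing equivalence for quasi-coherent sheaves: for a flat Mayer--Vietoris square with $f$ affine, the natural functor
\[
\QCoh(S) \xrightarrow{\ \sim\ } \QCoh(S') \times_{\QCoh(U')} \QCoh(U)
\]
is an equivalence of Grothendieck abelian symmetric monoidal categories. Granting this, part (1) is a $2$-categorical formality: a cocontinuous tensor functor out of $\QCoh(X)$ into the fiber product on the right is the same datum as a compatible pair of cocontinuous tensor functors into $\QCoh(S')$ and $\QCoh(U)$, which is precisely the statement that $\Phi_{X_\otimes}$ is an equivalence.

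For parts (2) and (3), I would exploit that $S' \sqcup U \to S$ is an fpqc cover: it is flat since $f$ is flat and $U\hookrightarrow S$ is an open immersion; it is jointly surjective because $f|_Z$ is an isomorphism (so $Z \subseteq f(S')$) and $U = S\setminus Z$; and each piece is quasi-compact because $f$ is affine and $Z$ is finitely presented. For (2), given $g_1,g_2\colon S\to X$, the presheaf $\Isom_S(g_1,g_2)$ is representable by an algebraic space over $S$ (the diagonal of $X$ is representable), hence is a sheaf in the fpqc topology, so isomorphisms descend along $S' \sqcup U \to S$; this is exactly full faithfulness of $\Phi_X$. For (3), when $\Delta_X$ is quasi-affine, $X$ itself is an fpqc stack by \cite[Cor.~10.7]{MR1771927}, so morphisms descend as well, and combined with (2) this yields that $\Phi_X$ is an equivalence.

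The crux is part (4), which is not formal: $X$ is only guaranteed to be an fppf stack, but our cover $S' \sqcup U \to S$ is merely fpqc. The strategy is to invoke the second main theorem of \cite{MR1432058}, which establishes precisely this descent for noetherian, excellent bases and arbitrary algebraic stacks $X$ with quasi-separated diagonal. Concretely, one uses (1) to produce compatible quasi-coherent data and then runs a noetherian approximation argument combined with Grothendieck's existence theorem along the formal neighborhood of $Z$ in $S$ to assemble a morphism $S \to X$; the excellence of $S$ and the noetherian hypothesis on $S'$ enter exactly to control completions and to ensure the approximation plus Grothendieck existence close up. The main obstacle is that this step cannot be obtained by descent theory alone and genuinely requires the formal-geometric machinery of \cite{MR1432058}; the content of part (4) is really a repackaging of that machinery into the form needed for the induction on the stratification of $X$ carried out in Section~\ref{S:filtrations}.
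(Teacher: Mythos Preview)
Your argument for part (1) is correct and matches the paper.

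For parts (2) and (3), your fpqc descent argument has a gap. The cover $S' \sqcup U \to S$ is indeed fpqc, but fpqc descent data for this cover requires compatibility over all of $(S' \sqcup U) \times_S (S' \sqcup U)$, in particular over $S' \times_S S'$. The Mayer--Vietoris fiber product $X(S') \times_{X(U')} X(U)$ only imposes compatibility over $U'$, which is \emph{a priori} strictly weaker. To close the gap you would need to show that agreement over $U'$ forces the two pullbacks to $S'\times_S S'$ to agree; but the obvious cover $(U' \times_U U') \sqcup S' \to S' \times_S S'$ (open complement of $Z$ together with the diagonal $\Delta_f$) is not flat, so the fpqc sheaf property alone does not apply. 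The paper handles (2)--(4) by citing \cite[6.5.1]{MR1432058} together with its own Corollary~\ref{C:MV-is-pushout}; the underlying argument constructs the missing descent datum over $S'\times_S S'$ by analyzing the Mayer--Vietoris square~\eqref{E:MV-square-diag} for the diagonal $\Delta_f$, which is a closed immersion rather than flat and therefore requires the separate gluing results of Appendix~\ref{S:formal-gluings-etale}.

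For part (4), your citation is also off in a second way: \cite[6.5.1]{MR1432058} requires $X$ to have \emph{separated} diagonal, not merely quasi-separated. Weakening this to quasi-separated is exactly the purpose of Appendix~\ref{S:formal-gluings-etale} (see Corollary~\ref{C:MV-is-pushout}), which needs the new gluing of \'etale morphisms along non-flat Mayer--Vietoris squares (Theorem~\ref{T:MV-gluing-of-etale}). Finally, the mechanism in \cite{MR1432058} is not Grothendieck's existence theorem but N\'eron--Popescu desingularization: one writes $S'$ as a filtered colimit of smooth $S$-schemes, thereby reducing fpqc descent to fppf descent (see the proof of Theorem~\ref{T:MV-gluing-alg-sp}). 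The excellence hypothesis on $S$ enters precisely to guarantee that the completion map has geometrically regular fibers so that N\'eron--Popescu applies.
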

  \begin{proof}
    By \cite[Thm.~B(1)]{mayer-vietoris} (or one of \cite[0.3]{MR1432058}
    and \cite[App.]{MR0272779} when $\pi$ is affine), there is
    an equivalence
    \[
      \QCoh(T)\to \QCoh(T')\times_{\QCoh(U')} \QCoh(U).
    \]
    Thus we have (i). Claims (ii) and (iii) are \cite[Thm.~B(3)]{mayer-vietoris} and claims 
    (iv) and (v) are \cite[Thm.~E and Thm.~A]{mayer-vietoris} respectively. Under some 
    additional assumptions: $\pi$ is affine, $\Delta_X$ is quasi-compact and separated,
    and in (v) $T'$ is locally noetherian; claims (ii)--(v) also follow from
    \cite[6.2 and 6.5.1]{MR1432058}.
  \end{proof}

  \begin{remark}\label{R:excellent}
    Recall that a noetherian ring $A$ is \emph{excellent}~\cite[p.~260]{MR1011461},
    \cite[Ch.~13]{MR575344} or~\cite[IV.7.8.2]{EGA}, if
    \begin{enumerate}
      \item $A$ is a G-ring, that is, $A_p\to \widehat{A_p}$ has geometrically
        regular fibers;
      \item the regular locus $\operatorname{Reg} B\subseteq \Spec B$ is open for every
        finitely generated $A$-algebra $B$; and
      \item $A$ is universally catenary.
    \end{enumerate}
    If (i) and (ii) hold, then we say that $A$ is \emph{quasi-excellent}.
    All excellency assumptions originate from~\cite{MR1432058,mayer-vietoris}
    via Theorem~\ref{T:formal-gluings}. The assumptions are
    used to guarantee that the formal fibers are geometrically regular so that
    N\'eron--Popescu desingularization applies. We can
    thus replace ``locally excellent'' with ``locally the spectrum of a
    G-ring''. Note that whereas being a G-ring and being quasi-excellent are
    local for the smooth topology~\cite[32.2]{MR1011461},
    excellency does not descend even for
    finite \'etale coverings~\cite[IV.18.7.7]{EGA}.
  \end{remark}

  \begin{corollary}\label{C:tannakian-over-closed+open}
    Let $X$ be an algebraic stack. Let $A$ be a ring and
    let $I\subset A$ be a finitely generated ideal. Let $T=\Spec A$, $Z=V(I)$ and
    $U=T\setminus Z$. Let $i \colon Z \to T$ and $j\colon U \to
    T$ be the resulting immersions.
    \begin{enumerate}
    \item Let $f_1$, $f_2 \colon T \to X$ be morphisms of algebraic
      stacks.
      \begin{enumerate}
      \item\label{CI:tannakian-f}
        Assume that $\ker(\sO_T \to j_*\sO_{U}) \cap \bigcap_{n=0}^\infty I^n=0$. Let $\alpha$, $\beta \colon
        f_1 \Rightarrow f_2$ be $2$-morphisms. If $\alpha_U = \beta_U$ and
        $\alpha_{Z^{[n]}} = \beta_{Z^{[n]}}$ for all $n$, then $\alpha=\beta$.
      \item\label{CI:tannakian-ff}
        Assume that $T$ is noetherian and that $\TF{X}(T)$ is faithful
        for all noetherian $T$. Let $t \colon f_1^* \Rightarrow f_2^*$ be
        a natural transformation of cocontinuous tensor functors. If
        $j^*(t)$ and $(i^{[n]})^*(t)$ are realizable for all $n$, then
        $t$ is realizable.
      \end{enumerate}
    \item\label{CI:tannakian-ess-surj}
      Assume either (a) $T$ is excellent, or (b) $T$ is noetherian and
      $X$ has quasi-affine or unramified diagonal.
      Further, assume that $\TFiso{X}(T)$ is fully
      faithful for all noetherian $T$. Let $f^*\colon \QCoh(X)
      \to\QCoh(T)$ be a cocontinuous tensor functor that \emph{preserves
        sheaves of finite type}. If $j^*f^*$
      and $(i^{[n]})^*f^*$ are algebraic for all $n$, then $f^*$ is
      algebraic.
    \end{enumerate}
  \end{corollary}
  The assumption in \ref{CI:tannakian-f} says that the
  filtration $\{\emptyset\inj Z\inj T\}$ is \emph{separating}
  (Definition~\ref{D:separating-filtration}). This is automatic if $T$ is
  noetherian (Lemma~\ref{L:separating}).

  \begin{proof}[Proof of Corollary~\ref{C:tannakian-over-closed+open}]
    First, we show \ref{CI:tannakian-ess-surj}. By assumption, the induced functor $(i^{[n]})^*f^*$ comes
    from a morphism $f^{[n]}\colon Z^{[n]}\to X$.
    Pick an \'etale cover $q\colon \tilde{Z}\to Z$
    such that $f^{[0]}\circ q\colon \tilde{Z}\to X$ has a lift $g\colon
    \tilde{Z}\to W$, where $p\colon W \to X$ is a smooth covering and $W$
    is affine. Descent (Lemma 
    \ref{L:fppf_local_tannakian}\ref{LI:fppf_local_tannakian:alg}) implies that we are free 
    to replace $T$ with an \'etale cover, so we
    may assume that $f$ also has a lift $g\colon
    Z\to W$ \cite[IV.18.1.1]{EGA}.

    Since $p$ is smooth, we may choose compatible lifts $g^{[n]}\colon Z^{[n]}\to W$ of
    $f^{[n]}$ for all $n$. But $W$ is affine, so there is an induced morphism
    $\hat{g}\colon \hat{T} \to W$, where $\hat{T}=\Spec \hat{A}$ and $\hat{A}$
    denotes the completion of $A$ at the ideal $I$. Let $\hat{f}=p\circ
    \hat{g}$. Then $(i^{[n]})^*\hat{f}^*=(f^{[n]})^*=(i^{[n]})^*f^*$ for all $n$. Since
    $\Coh(\hat{T})=\varprojlim_n \Coh(Z_n)$ (Lemma \ref{L:re_coh}), it follows that $\hat{f}^*\simeq \pi^*f^*$
    where $\pi\colon \hat{T}\to T$ is the completion morphism. Indeed, this last equivalence may be verified after restricting both sides to quasi-coherent $\sO_X$-modules of finite type (Example \ref{E:stk_fg_description}) and both sides send quasi-coherent $\sO_X$-modules of finite type to $\Coh(\hat{T})$. 

    Let $\hat{\jmath} \colon \hat{U} \to \hat{T}$ be the pullback of $j$ along $\pi$; then 
    we obtain a flat Mayer--Vietoris square:
    \[
  \xymatrix{
    \hat{U}'\ar[r]^{\hat{\jmath}}\ar[d]_{\pi_U} & \hat{T} \ar[d]^\pi \\
    U\ar[r]^j & T.\ar@{}[ul]|\square
  }
  \]
  Since $U$ and $\hat{U}$ are noetherian, $\TFiso{X}({U})$ and $\TFiso{X}(\hat{U})$ 
  are fully faithful. Thus, there is an essentially unique morphism of algebraic stacks $h 
  \colon U \to X$ such that $h^*\simeq j^*f^*$. But there are isomorphisms:
  \[
  \hat{\jmath}^*\hat{f}^*\simeq \hat{\jmath}^*\pi^*f^* \simeq \pi_U^*j^*f^* 
  \simeq \pi_U^*h^*,
  \]
  so $\hat{f}\circ \hat{\jmath} \simeq h\circ \pi_U$. 
  That $f^*$ is algebraic now follows from Theorem~\ref{T:formal-gluings}.

    For \ref{CI:tannakian-ff}, we proceed similarly. Consider the representable
    morphism $E \to T$ given by the equalizer of $f_1$ and $f_2$.
    Then $2$-isomorphisms between $f_1$ and $f_2$ correspond to
    $T$-sections of $E$. By assumption, we have compatible sections $\tau_U \in
    E(U)$ and $\tau^{[n]} \in E(Z^{[n]})$ for all $n$. Choose an \'etale
    presentation $E'\to E$ by an affine scheme $E'$. We may replace
    $T$ with an \'etale cover (Lemma~\ref{L:fppf_local_tannakian}\ref{LI:fppf_local_tannakian:ff}) and thus assume that $\tau^{[0]}$ lifts to
    $E'$.
    In particular, there are compatible lifts of all the
    $\tau^{[n]}$ to $E'$. Since $E'$ is affine, we get an induced morphism
    $\hat{T}\to E'$; thus, a morphism $\hat{T} \to E$. Equivalently, we get
    a $2$-isomorphism between $f_1\circ
    \pi$ and $f_2\circ \pi$. The induced $2$-isomorphism between
    $\pi^*f_1^*$ and $\pi^*f_2^*$ equals $\pi^*t$ since it coincides on
    the truncations. We may now apply
    Theorem~\ref{T:formal-gluings} to deduce that $t$ is realized by a
    $2$-morphism $\tau\colon f_1\Rightarrow f_2$.

    For \ref{CI:tannakian-f}, we consider the representable morphism $r\colon R \to T$ given
    by the equalizer of $\alpha$ and $\beta$. It suffices to prove that $r$ is
    an isomorphism. Note that $r$ is always a monomorphism and
    locally of finite presentation. By assumption, there are compatible
    sections of $r$ over $U$ and $Z^{[n]}$ for all $n$, thus $r_U$ and $r_{Z^{[n]}}$
    are isomorphisms for all $n$. By Proposition
    \ref{P:mono-with-stratification-is-iso}, $r$ is an isomorphism.
  \end{proof}
  \begin{remark}
    We do not know if the condition that $f^*$ preserves sheaves of finite type in
    \ref{CI:tannakian-ess-surj} is necessary. We do know that for any sheaf $F$
    of finite type, the restrictions of $f^*F$ to $U$ and $Z^{[n]}$ are coherent but
    this does not imply that $f^*F$ is coherent. For example, if $A=k[[x]]$,
    and $I=(x)$, then the $A$-module $k((x))/k[[x]]$ is not finitely generated
    but becomes $0$ after tensoring with $A/(x^n)$ or $A_x$.
  \end{remark}

\section{Tannaka duality}\label{S:filtrations}
In this section, we prove our general Tannaka duality result
(Theorem~\ref{T:main_tannaka_non-noeth}) and as a consequence 
also establish Theorem~\ref{T:main_tannaka:aff_stab}. To accomplish
this, we consider the following refinement of
\cite[Def.~2.5]{hallj_dary_alg_groups_classifying}.
\begin{definition}\label{D:fp-filtration}
Let $X$ be a quasi-compact algebraic stack. A \emph{finitely presented
  filtration of $X$} is a sequence of finitely presented closed immersions
$\emptyset=X_0\inj X_1\inj X_2\inj\dots \inj X_r\inj X$ such that
$|X_r|=|X|$. The \emph{strata} of the filtration are the locally closed
finitely presented substacks $Y_k:=X_k\setminus X_{k-1}$. The $n$th
\emph{infinitesimal neighborhood of $X_k$} is the finitely presented closed
immersion $X_k^{[n]}\inj X$ which is given by the ideal $I_k^{n+1}$ where
$X_k\inj X$ is given by $I_k$. The $n$th infinitesimal neighborhood of $Y_k$
is the locally closed finitely presented substack
$Y_k^{[n]}:=X_k^{[n]}\setminus X_{k-1}$.
\end{definition}

Stacks that have affine stabilizers can be stratified into stacks with
the resolution property.
\begin{proposition}\label{P:filtrations-exists}
Let $X$ be an algebraic stack. The following are equivalent:
\begin{enumerate}
\item $X$ is quasi-compact and quasi-separated with affine stabilizers;
\item $X$ has a finitely presented filtration $(X_k)$ with strata of
the form $Y_k=[U_k/\GL_{N_k}]$ where $U_k$ is quasi-affine.
\item $X$ has a finitely presented filtration $(X_k)$ with strata $Y_k$ that are
  quasi-compact with affine diagonal and the resolution property.
\end{enumerate}
\end{proposition}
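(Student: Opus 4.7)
My plan is to prove the cyclic implications $(iii)\Rightarrow(i)$, $(iii)\Leftrightarrow(ii)$, and $(i)\Rightarrow(iii)$, with the last implication being the only substantive one.

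The equivalence $(ii)\Leftrightarrow(iii)$ is immediate from the Gross--Totaro theorem \cite{2013arXiv1306.5418G}: a quasi-compact, quasi-separated algebraic stack with affine stabilizers has the resolution property if and only if it is of the form $[U/\GL_N]$ with $U$ quasi-affine. Moreover, a stack $[U/\GL_N]$ with $U$ quasi-affine automatically has affine diagonal, because $U$ is separated so the action map $U\times \GL_N\to U\times U$ factors through the closed immersion $\Gamma\colon U\times \GL_N\hookrightarrow U\times U\times \GL_N$ followed by the affine projection to $U\times U$. For $(iii)\Rightarrow(i)$, I induct on the length $r$ of the filtration. Any point of $X$ lies on some unique minimal stratum $Y_k$, and by $(iii)$ its stabilizer is affine, so $X$ has affine stabilizers. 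Quasi-compactness of $|X|$ follows since $|X|=\bigcup_{k=1}^r |Y_k|$ is a finite union of quasi-compact subspaces. For quasi-separatedness, I induct on $r$: if $X_{r-1}$ has quasi-compact diagonal and $Y_r$ has (quasi-)affine diagonal, then the diagonal of $X_r=X$ is quasi-compact by checking smooth-locally on the pieces.

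For the main implication $(i)\Rightarrow(iii)$ the plan is in two steps. First, I reduce to the case where $X$ is noetherian by standard noetherian approximation for qcqs algebraic stacks: write $X=\varprojlim_\lambda X_\lambda$ with $X_\lambda$ of finite presentation over $\Z$ and affine transition maps, find $\lambda_0$ so that the condition of having affine stabilizers descends to $X_{\lambda_0}$, produce a filtration on $X_{\lambda_0}$, and pull it back to $X$. Second, assuming $X$ noetherian, I proceed by noetherian induction on the closed substacks of $X$ (equivalently, by induction on $\dim|X|$): it suffices to produce a nonempty, quasi-compact, finitely presented open substack $U\subseteq X$ of the form $[V/\GL_N]$ with $V$ quasi-affine, since then the closed complement $Z=X\setminus U$ is again qcqs noetherian with affine stabilizers but of strictly smaller topological dimension (it is a proper closed substack), so the inductive hypothesis gives a filtration of $Z$, which extended by $X$ itself yields a filtration of $X$.

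The core difficulty, and the expected main obstacle, lies in producing the dense open $U$ of the form $[V/\GL_N]$. My approach is as follows: replacing $X$ by its reduced structure (which only affects things up to a nilpotent thickening and is fine since the resolution property is insensitive to nilpotents by Question~\ref{Q:deformation-of-resprop}'s affirmative answer in the cohomologically affine case, or else by adjusting the filtration to accommodate the nilpotent structure later), one exhibits a dense open on which $X$ is a gerbe over an algebraic space---this is a generic consequence of noetherianity and affine stabilizers, cf.\ \cite{rydh-2014}. After further shrinking this open, the gerbe is banded by a finitely presented affine group scheme $G$ over its coarse space, which embeds étale-locally (indeed, after a further shrinking and using generic flatness) into some $\GL_N$. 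This produces generically the quotient presentation $[V/\GL_N]$ with $V$ quasi-affine over the base---by Gross--Totaro, this is equivalent to having the resolution property on the open. This step essentially refines \cite[Def.~2.5]{hallj_dary_alg_groups_classifying}, where a similar stratification is established. Once the base case of generic existence is in hand, noetherian induction terminates in finitely many steps because $|X|$ is a noetherian topological space.
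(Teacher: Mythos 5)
Your treatment of (ii)$\iff$(iii) (Gross--Totaro) and of (iii)$\implies$(i) agrees with the paper, which disposes of these implications by citation and the remark that the last one is straightforward. The substantive issue is (i)$\implies$(ii)/(iii), where the paper simply invokes \cite[Prop.~2.6(i)]{hallj_dary_alg_groups_classifying} while you attempt a direct proof. Your noetherian argument---generic gerbe structure on the reduction, embedding the band into some $\GL_N$ after shrinking, noetherian induction on the complement---is essentially the Kresch \cite[Prop.~3.5.9]{MR1719823} and Drinfeld--Gaitsgory \cite[Prop.~2.3.4]{MR3037900} argument, which the paper explicitly acknowledges as valid when $X$ is noetherian or has finitely presented inertia. (Even there your sketch is thin: a gerbe is only banded after an fppf cover of its coarse space, and passing from ``banded by a closed subgroup of $\GL_N$'' to a global presentation $[V/\GL_N]$ with $V$ quasi-affine on a dense open requires a genuine spreading-out argument. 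Your appeal to Question~\ref{Q:deformation-of-resprop} is also misplaced---that question is open---though in fact passing to $X_\red$ is harmless here for the simpler reason that a finitely presented filtration of $X_\red$ is already one of $X$.)

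The genuine gap is the very first reduction: ``standard noetherian approximation for qcqs algebraic stacks'' does not exist in the generality you need. Writing a quasi-compact, quasi-separated algebraic stack as $\varprojlim_\lambda X_\lambda$ with $X_\lambda$ of finite presentation over $\Z$ and affine transition maps is known only for restricted classes (e.g., quasi-finite and locally separated diagonal, or stacks already known to be of global type, cf.\ \cite{rydh-2009}); for a general qcqs stack with affine stabilizers---whose diagonal may be neither separated nor finitely presented---this is not available, and in fact the stratification of Proposition~\ref{P:filtrations-exists} is one of the inputs used elsewhere to \emph{obtain} approximation results for such stacks, so assuming it here is close to circular. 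The paper flags exactly this point: it notes that in the noetherian/finitely-presented-inertia case one can stratify into gerbes, ``something which is not possible in general,'' which is why the general case is delegated to the different argument of \cite{hallj_dary_alg_groups_classifying} rather than reduced to the noetherian one. As written, your proof establishes (i)$\implies$(iii) only under an additional noetherianness (or finitely presented inertia) hypothesis.
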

\begin{proof}
That (i)$\implies$(ii)
is~\cite[Prop.~2.6(i)]{hallj_dary_alg_groups_classifying}.
That (ii)$\iff$(iii) is the Totaro--Gross theorem~\cite{2013arXiv1306.5418G}. That (iii)$\implies$(i)
is straightforward.
\end{proof}
When in addition $X$ is noetherian or, more generally, $X$ has finitely
presented inertia, this
result is due to Kresch~\cite[Prop.~3.5.9]{MR1719823} and
Drinfeld--Gaitsgory~\cite[Prop.~2.3.4]{MR3037900}. They construct
stratifications by quotient stacks of the form $[V_k/\GL_{N_k}]$, where each
$V_k$ is quasi-projective and the action is linear. This implies that the
strata have the resolution property. When $X$ has finitely presented
inertia the situation is simpler since $X$
can be stratified into gerbes~\cite[Cor.~8.4]{rydh-2014}, something which
is not possible in general.

\begin{remark}
In~\cite[Def.~1.1.7]{MR3037900}, Drinfeld and Gaitsgory introduces the notion
of a QCA stack. These are (derived) algebraic stacks that are quasi-compact and
quasi-separated with affine stabilizers and finitely presented inertia. The
condition on the inertia is presumably only used
for~\cite[Prop.~2.3.4]{MR3037900} and could be excised using
Proposition~\ref{P:filtrations-exists}.
\end{remark}

We now state and prove the main result of the paper. 
\begin{theorem}\label{T:main_tannaka_non-noeth}
  Let $T$ and $X$ be algebraic stacks and consider the functor
  \begin{align*}
  \TF{X}(T)\colon \Hom(T,X) &\to \Hom_{c\otimes}(\QCoh(X),\QCoh(T))
  \end{align*}
  and its variants $\TF{X}^{\ft}(T)$, $\TFiso{X}(T)$ and $\TFiso{X}^{\ft}(T)$ (see \S\ref{S:tensorial}). 
  Assume that $X$ is quasi-compact and quasi-separated.
  \begin{enumerate}
    \item\label{ti:tannaka:qg}
      If $X$ has quasi-affine diagonal, then
      \begin{enumerate}
        \item\label{ti:tannaka_ff:qg} $\TF{X}(T)$ is
          fully faithful; and
        \item\label{ti:tannaka_surj:qg} $\TF{X}^\ft(T)$ is essentially surjective if
          $T$ is locally noetherian.
      \end{enumerate}
    \item\label{ti:tannaka:wg}
      If $X$ has affine stabilizers, then
      \begin{enumerate}
      \item\label{ti:tannaka_faith:noemb/noeth}
        $\TF{X}(T)$ is faithful if $T$ is
        locally noetherian or has no embedded components;
      \item\label{ti:tannaka_full:core} $\TFiso{X}(T)$ is full if
        $T$ is locally noetherian;
      \item\label{ti:tannaka_full} $\TF{X}(T)$ is full if
        $X$ is affine-pointed and $T$ is locally noetherian.
      \item\label{ti:tannaka_surj:exc}
        $\TF{X}^\ft(T)$ is essentially surjective if $T$ is locally excellent, or
         $T$ is locally noetherian and $X$ is Deligne--Mumford.
      \end{enumerate}
  \end{enumerate}
  In particular, $\TFiso{X}^\ft(T)$ is an equivalence if $X$ has affine stabilizers
  and $T$ is locally excellent,
  and $\TF{X}^\ft(T)$ is an equivalence if $T$ is locally noetherian and $X$ either has quasi-affine diagonal or is Deligne--Mumford.
\end{theorem}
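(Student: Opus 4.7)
The plan is to reduce to $T=\Spec A$ affine using the descent properties of both sides, handle fully faithfulness by combining Proposition~\ref{P:quaff_full_faithful} with the topological comparison results already established, and prove essential surjectivity by induction on a finitely presented filtration of $X$ into strata with the resolution property.

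For fully faithfulness, assertion~\ref{ti:tannaka_ff:qg} is immediate from Proposition~\ref{P:quaff_full_faithful}\ref{I:quaff_ff:abs}. In the affine-stabilizers setting, Corollary~\ref{C:points-affine-stabs} (for natural isomorphisms) or Lemma~\ref{L:aff-pointed-use} (in the affine-pointed case) forces $|f_1|=|f_2|$ as maps $|T|\to|X|$; combined with \cite[Thm.~B.2]{MR2774654}, which ensures that field-valued points of $X$ are quasi-affine, this reduces the fullness statements~\ref{ti:tannaka_full:core} and~\ref{ti:tannaka_full} to Proposition~\ref{P:quaff_full_faithful}\ref{I:quaff_ff:rel}. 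The faithfulness statement~\ref{ti:tannaka_faith:noemb/noeth} then follows by testing equality of $2$-morphisms at associated points of $T$ (in the no-embedded-components case) or at infinitesimal neighborhoods of points (in the locally noetherian case, via Krull's intersection theorem applied to the equalizer algebraic space), and appealing to Proposition~\ref{P:quaff_full_faithful}\ref{I:quaff_ff:rel_faith}.

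The heart of the proof is essential surjectivity. I would apply Proposition~\ref{P:filtrations-exists} to obtain a finitely presented filtration $\emptyset=X_0\subset X_1\subset\cdots\subset X_r$ with $|X_r|=|X|$ whose strata $Y_k$ have the resolution property, and induct on $r$; the base case $r=1$ is precisely the Main Lemma~\ref{L:funny_nil_res}, since $X$ is then a nilpotent thickening of $Y_1=X_1$. For the inductive step, given $f^*\colon\QCoh(X)\to\QCoh(T)$ with $T=\Spec A$, let $I\subseteq\sO_T$ be the image of $f^*$ applied to the ideal cutting out $X_{r-1}\subset X$, and set $U=T\setminus V(I)$, $Z_n=V(I^{n+1})$. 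Example~\ref{E:qcohstk-localization-factorization} together with Theorem~\ref{T:localization-factorization} yields a factorization of $f^*|_U$ through a cocontinuous tensor functor $\QCoh(Y_r)\to\QCoh(U)$, which is representable by Theorem~\ref{T:resprop-tensorial} since $Y_r$ has the resolution property. Dually, Corollary~\ref{C:pb_affine_stack} produces tensor functors $\QCoh(X_{r-1}^{[n]})\to\QCoh(Z_n)$; since $X_{r-1}^{[n]}$ inherits a filtration of length $r-1$ from the one on $X$, these are representable by induction. Corollary~\ref{C:tannakian-over-closed+open} then glues the pieces into a morphism $T\to X$ whose induced tensor functor agrees with $f^*$.

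The main obstacle is the gluing step, and this is precisely where the hypotheses on $T$ enter: Corollary~\ref{C:tannakian-over-closed+open}(ii) requires either quasi-affine diagonal for $X$ (case~\ref{ti:tannaka_surj:qg}) or local excellency of $T$ (case~\ref{ti:tannaka_surj:exc}), reflecting the dependence of Theorem~\ref{T:formal-gluings} on N\'eron--Popescu desingularization when the diagonal fails to be quasi-affine. A secondary technical issue is that applying Corollary~\ref{C:tannakian-over-closed+open} in the quasi-affine diagonal case for general locally noetherian $T$ may require a noetherian approximation argument to reduce to noetherian affine $T$ before invoking the formal gluings theorem.
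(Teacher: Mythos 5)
Your treatment of essential surjectivity is essentially the paper's argument: reduce to $T=\Spec A$ affine, induct on the number of strata of a filtration from Proposition~\ref{P:filtrations-exists}, use Theorem~\ref{T:localization-factorization} over the open part and an affine-pushout argument (Corollary~\ref{C:pb_affine_stack}) over the infinitesimal neighborhoods of the closed part, and glue with Corollary~\ref{C:tannakian-over-closed+open}. The only structural difference is the direction of the induction: you peel off the open top stratum and induct on $X_{r-1}^{[n]}$, whereas the paper removes the closed bottom stratum $X_1$ and inducts on the open complement $V=X\setminus X_1$; both work. Two small corrections there: the tensor localization factors $f^*|_U$ through $\QCoh(X\setminus X_{r-1})$, which is a nilpotent thickening of $Y_r$ rather than $Y_r$ itself, so that piece needs the Main Lemma~\ref{L:funny_nil_res}, not Theorem~\ref{T:resprop-tensorial}; and your worry about noetherian approximation in case \ref{ti:tannaka_surj:qg} is unfounded, since after passing to an affine open of a locally noetherian $T$ the ring is noetherian and Corollary~\ref{C:tannakian-over-closed+open}(ii) applies directly.

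The genuine gap is in the fullness statements \ref{ti:tannaka_full:core} and \ref{ti:tannaka_full}. Knowing that $|f_1|=|f_2|$ and that every field-valued point of $X$ is quasi-affine does \emph{not} make the morphisms $f_1,f_2\colon\Spec A\to X$ quasi-affine, so Proposition~\ref{P:quaff_full_faithful}\ref{I:quaff_ff:rel} cannot be invoked for a general affine $T$. (Even affineness of field-valued points is Question~\ref{Q:pseudo-geometric}; for a morphism from a general affine scheme, quasi-affineness is exactly what fails when $\Delta_X$ is not quasi-affine, and this is why case \ref{ti:tannaka:wg} is harder than case \ref{ti:tannaka:qg}.) The paper proves fullness by the same stratification induction used for essential surjectivity: the restrictions of $\gamma$ to $Z^{[n]}$ and to $U$ are realizable because they factor through $X_1^{[n]}$ (tensorial by the Main Lemma) and through $V$ (full by induction), and the resulting $2$-morphisms are glued by Corollary~\ref{C:tannakian-over-closed+open}(i)(b), which in turn relies on faithfulness and the formal-gluing machinery (lifting to a smooth affine chart, completing, Theorem~\ref{T:formal-gluings}). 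Your faithfulness sketch has the right flavour (separating families of one-point subschemes, cf.\ Lemma~\ref{L:separating}), but the paper again routes it through the induction and Proposition~\ref{P:mono-with-stratification-is-iso} applied to the equalizer rather than through Proposition~\ref{P:quaff_full_faithful}\ref{I:quaff_ff:rel_faith} at associated points, for the same reason: the restricted morphisms are not known to be quasi-affine. Finally, note that Corollary~\ref{C:tannakian-over-closed+open}(ii) takes full faithfulness as a hypothesis, so the gap in fullness also propagates into your essential surjectivity argument in case \ref{ti:tannaka_surj:exc}.
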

\begin{proof}
When $X$ has quasi-affine diagonal, we have already seen that $\TF{X}(T)$ is
fully faithful for all $T$ (Proposition~\ref{P:quaff_full_faithful}). This is \ref{ti:tannaka_ff:qg}.

Choose a filtration $(X_k)$ with strata $(Y_k)$ as in
Proposition~\ref{P:filtrations-exists}. We will prove the theorem by induction
on the number of strata $r$. If $r=0$, then $X=\emptyset$ and there is nothing
to prove.
If $r\geq 1$, then $U:=X\setminus X_1$ has a filtration of length $r-1$; thus
by induction the theorem holds for $U$. The theorem also holds for
$X_1^{[n]}=Y_1^{[n]}$ and all $n$, since $\TF{X_1^{[n]}}(T)$ is an
equivalence of categories by the Main Lemma~\ref{L:funny_nil_res}.
Note that if $r=1$, then $U=\emptyset$ and $X=X_1^{[n]}=Y_1^{[n]}$ for
sufficiently large $n$.

Let $I\subseteq \sO_X$ be the ideal
defining $Z=X_1$. Let $i^{[n]}\colon Z^{[n]}\inj X$ be the closed substack defined
by $I^{n+1}$ and let $j\colon U\to X$ be its complement.

For \ref{ti:tannaka_faith:noemb/noeth},
pick two maps $f_1,f_2\colon T\to X$ and $2$-isomorphisms
$\tau_1,\tau_2:f_1\Rightarrow f_2$ and assume that
$\TF{X}(T)(\tau_1)=\TF{X}(T)(\tau_2)$. We need to prove that
$\tau_1=\tau_2$.

For \ref{ti:tannaka_full:core} (resp.~\ref{ti:tannaka_full}), pick two maps $f_1,f_2\colon T\to X$ and a natural
isomorphism (resp.~transformation)
$\gamma\colon f_1^*\Rightarrow f_2^*$ of cocontinuous tensor functors. We need to prove
that $\gamma$ is realizable.

For \ref{ti:tannaka_surj:qg} and \ref{ti:tannaka_surj:exc}, pick a cocontinuous
tensor functor $f^*\colon \QCoh(X)\to \QCoh(T)$ preserving sheaves of finite type.
We need to prove that $f^*$ is algebraic.

When we prove \ref{ti:tannaka_surj:exc} (resp.\ \ref{ti:tannaka_full:core} and
\ref{ti:tannaka_full}), we assume that \ref{ti:tannaka_full:core}
(resp.\ \ref{ti:tannaka_faith:noemb/noeth}) already has been established. When
we prove \ref{ti:tannaka_surj:qg}, we note that $\omega_X(T)$ is fully faithful
for all $T$.
By Lemma \ref{L:fppf_local_tannakian}, it is enough to prove the results
when $T=\Spec A$ is affine.

In cases \ref{ti:tannaka_faith:noemb/noeth}, \ref{ti:tannaka_full:core} and \ref{ti:tannaka_full}, let $I_T = \Im(f_2^*I \to f_2^*\sO_X = \sO_T)$, which is a finitely generated ideal because $f_2$ is a morphism. In cases \ref{ti:tannaka_surj:qg} and \ref{ti:tannaka_surj:exc}, let
$I_T=\Im(f^*I\to f^*\sO_X=\sO_T)$, which is a finitely generated ideal because $T$ is noetherian. Let
$i_T^{[n]}\colon Z^{[n]}_T\inj T$ be the finitely presented closed immersion defined
by $I_T^{n+1}$ and let $j_T\colon U_T\inj T$ be its complement, a quasi-compact open
immersion.

In cases \ref{ti:tannaka_faith:noemb/noeth}, \ref{ti:tannaka_full:core} and \ref{ti:tannaka_full}, we have that
$U_T=f_1^{-1}(U)=f_2^{-1}(U)$; in the first case this is obvious and for the other two cases this follows from Corollary \ref{C:points-affine-stabs} and Lemma \ref{L:aff-pointed-use}, respectively.
We also have that
$Z^{[n]}_T=f_2^{-1}(Z^{[n]})\inj f_1^{-1}(Z^{[n]})$.
Thus, after restricting to either $Z^{[n]}_T$ or $U_T$ we have that $\tau_1=\tau_2$
in case \ref{ti:tannaka_faith:noemb/noeth} and that $\gamma$ is realizable in
cases \ref{ti:tannaka_full:core} and \ref{ti:tannaka_full}.

In cases \ref{ti:tannaka_surj:qg} and \ref{ti:tannaka_surj:exc}, Theorem \ref{T:tensor-loc-for-algebraic-stacks} produces for every $n\geq 0$ essentially unique cocontinuous tensor functors $f_U^* \colon \QCoh(U) \to \QCoh(U_T)$ and $f_{Z^{[n]}}^* \colon \QCoh(Z^{[n]}) \to \QCoh(Z_T^{[n]})$ such that $j_T^*f^*\simeq f_U^*j^*$ and $(i_T^{[n]})^*f^*\simeq (f_{Z^{[n]}})^*(i^{[n]})^*$. By the inductive assumption, $f_U^*$ is algebraic and the case $r=1$ implies that $f_{Z^{[n]}}^*$ is algebraic for each $n\geq 0$. In particular, $j_T^*f^*$ and $(i_T^{[n]})^*f^*$ is algebraic for each $n\geq 0$.

If $T$ is noetherian or has no embedded associated
points, then the stratification $\emptyset\subset Z_T\subset T$ is separating
by Lemma~\ref{L:separating}.
The result now follows from Corollary~\ref{C:tannakian-over-closed+open}.
\end{proof}
\begin{remark}\label{R:faithfulness-for-flat-over-noetherian}
Let $X$ be a quasi-compact and quasi-separated algebraic stack with affine
stabilizers. Let $T$ be a locally noetherian stack and
let $\pi\colon T'\to T$ be a flat morphism. Assume that we have morphisms
$f_1,f_2\colon T\to X$. Then
$\Hom(f_1\circ \pi,f_2\circ \pi)\to \Hom_\otimes(\pi^*f_1^*,\pi^*f_2^*)$ is
injective even if $T'$ is not noetherian.
Indeed, the stratification on $T'$ constructed in the proof of
Theorem \ref{T:main_tannaka_non-noeth}~\ref{ti:tannaka_faith:noemb/noeth},
is the pull-back along $\pi$ of a
stratification on $T$, hence separating by Lemma~\ref{L:separating}.
\end{remark}

We conclude with the proof of Theorem~\ref{T:main_tannaka:aff_stab}.
\begin{proof}[Proof of Theorem~\ref{T:main_tannaka:aff_stab}]
We note that
\[
\Hom_{r\otimes,\simeq}(\Coh(X),\Coh(T)) \to
\Hom_{c\otimes,\simeq}^\ft(\QCoh(X),\QCoh(T))
\]
is an equivalence of categories. It is thus enough to prove
that $\TFiso{X}^\ft(T)$
is
an equivalence of groupoids, which follows from
Theorem~\ref{T:main_tannaka_non-noeth}.
\end{proof}
\section{Applications}
In this section, we address the applications outlined in the introduction.
\begin{proof}[Proof of Corollary~\ref{C:application-fpqc-stack}]
Let $T'\to T$ be an fpqc covering with $T$ locally excellent and $T'$ locally noetherian.
Since $X$ is an fppf-stack, we may assume that $T$ and $T'$ are affine and
that $T'\to T$ is faithfully flat.
Let
$T''=T'\times_T T'$. Since $X$ has affine stabilizers, the functor
$\TFiso{X}(T)$ is an equivalence, the functor $\TFiso{X}(T')$ is fully faithful
and the functor $\TF{X}(T'')$ is faithful
for morphisms $T''\to T'\to X$ (Theorem~\ref{T:main_tannaka_non-noeth}
and Remark~\ref{R:faithfulness-for-flat-over-noetherian}).
Since $\Hom_{c\otimes,\simeq}(\QCoh(X),\QCoh(-))$ is an fpqc stack, it follows
that $T'\to T$ is a morphism of effective descent for $X$.
\end{proof}

\begin{proof}[Proof of Corollary~\ref{C:application-completions}]
It is readily verified that we can assume that $X$ is quasi-compact.
As $A$ is noetherian, $\Coh(A)=\varprojlim_n \Coh(A/I^n)$.
Thus,
\begin{align*}
X(A) &\iso \Hom_{r\otimes,\simeq}(\Coh(X),\Coh(A)) \\
&\iso \Hom_{r\otimes,\simeq}(\Coh(X),\varprojlim \Coh(A/I^n)) \\
&\iso \varprojlim \Hom_{r\otimes,\simeq}(\Coh(X),\Coh(A/I^n)) \\
&\iso \varprojlim X(A/I^n).\qedhere
\end{align*}
\end{proof}
\begin{proof}[Proof of Theorem~\ref{T:main_alg_hom}]
  First, we prove \ref{TI:main_alg_hom:rep}.
  We begin with the following standard reductions: we can assume that
  $S$ is affine; $X\to S$ is quasi-compact, so is of finite
  presentation; and $S$ is of finite type over $\Spec \Z$. 

  Since $S$ is now assumed to be excellent, we can prove the algebraicity of
  $\underline{\Hom}_S(Z,X)$ using a variant of Artin's criterion for
  algebraicity due to the first author
  \cite[Thm.~A]{hallj_openness_coh}. Hence, it is sufficient to prove that $\underline{\Hom}_S(Z,X)$ is
  \begin{enumerate}
    \renewcommand{\theenumi}{[\arabic{enumi}]}
    \renewcommand{\labelenumi}{\theenumi}
  \item\label{TPI:main_alg_hom:stk} a stack for the \'etale topology;
  \item\label{TPI:main_alg_hom:lp} limit preserving, equivalently, locally of finite presentation; 
  \item\label{TPI:main_alg_hom:homog} homogeneous, that is, satisfies a strong version of the
    Schlessinger--Rim criteria;
  \item\label{TPI:main_alg_hom:eff} effective, that is, formal deformations can be algebraized;
  \item\label{TPI:main_alg_hom:coh} the automorphisms, deformations, and obstruction functors are coherent.
  \end{enumerate}
  The main result of this article provides a method to prove \ref{TPI:main_alg_hom:eff} in 
  maximum generality, which we address first. Thus, let $T=\Spec B\to S$, where
$(B,\im)$ is a complete local noetherian ring. Let $T_n=\Spec(B/\im^{n+1})$. Since $Z \to S$ is proper, for every noetherian algebraic stack $W$ with affine stabilizers there are equivalences
\begin{align*}
\Hom(Z\times_S T,W) &\iso \Hom_{r\otimes,\simeq}(\Coh(W),\Coh(Z\times_S T)) & \mbox{(Theorem~\ref{T:main_tannaka:aff_stab})}\\
&\iso \Hom_{r\otimes,\simeq}(\Coh(W),\varprojlim \Coh(Z\times_S T_n))  & \mbox{\cite[Thm.~1.4]{MR2183251}}\\
&\iso \varprojlim \Hom_{r\otimes,\simeq}(\Coh(W),\Coh(Z\times_S T_n)) & \mbox{(Lemma \ref{L:re_coh})} \\
&\iso \varprojlim \Hom(Z\times_S T_n,W)  & \mbox{(Theorem~\ref{T:main_tannaka:aff_stab})}.
\end{align*}
Since $X$ and $S$ have affine stabilizers, it follows that
\[
\Hom_S(Z\times_S T, X) \iso \varprojlim \Hom_S(Z\times_S T_n,X);
\]
that is, the stack $\underline{\Hom}_S(Z,X)$ is effective and so satisfies \ref{TPI:main_alg_hom:eff}. 

The remainder of Artin's conditions are routine, so we will just sketch the arguments and provide pointers to the literature where they are addressed in more detail. Condition \ref{TPI:main_alg_hom:stk} is just \'etale descent and 
\ref{TPI:main_alg_hom:lp} is standard---see, for example, 
\cite[Prop.~4.18]{MR1771927}. For conditions \ref{TPI:main_alg_hom:homog} and \ref{TPI:main_alg_hom:coh}, it will be convenient 
to view $\underline{\Hom}_S(Z,X)$ as a substack of another moduli problem. This lets us  avoid having to directly discuss the deformation theory of non-representable morphisms of algebraic stacks.

If $W \to S$ is a morphism of algebraic stacks, let $\underline{\mathrm{Rep}}_{W/S}$ 
denote the $S$-groupoid that assigns to each $S$-scheme $T$ the category of 
\emph{representable} morphisms of algebraic stacks $V \to W\times_S T$ such that the 
composition $V \to W\times_S T \to T$ is proper, flat and of finite presentation. There is a 
morphism of $S$-groupoids: $\Gamma\colon \underline{\Hom}_S(Z,X) \to 
\underline{\mathrm{Rep}}_{Z\times_S X/S}$, which is given by sending a $T$-morphism 
$f\colon Z\times_S T \to X\times_S T$ to its graph $\Gamma(f) \colon Z\times_S T \to 
(Z\times_S X)\times_S T$. It is readily seen that $\Gamma$ is formally 
\'etale since $Z \to S$ is flat. Hence, it is sufficient to verify conditions \ref{TPI:main_alg_hom:homog} and 
\ref{TPI:main_alg_hom:coh} for $\underline{\mathrm{Rep}}_{Z\times_S X/S}$ 
\cite[Lemmas 1.5(9), 6.3 \& 6.11]{hallj_openness_coh}. That $\underline{\mathrm{Rep}}_{Z\times_S X/S}$ is homogeneous follows immediately from \cite[Lem.~9.3]{hallj_openness_coh}. A description of the automorphism, deformation and obstruction functors of $\underline{\mathrm{Rep}}_{Z\times_S X/S}$ in terms of the cotangent complex are given on \cite[p.\ 37]{hallj_openness_coh}, which mostly follows  from the results of \cite{MR2206635}. That these functors are coherent is \cite[Thm.~C]{hallj_coho_bc}. This completes the proof of \ref{TI:main_alg_hom:rep}.

We now address \ref{TI:main_alg_hom:qs} and \ref{T:main_alg_hom:diag}, that is, the separation properties of the algebraic stack $\underline{\Hom}_S(Z,X)$ relative to $S$. Let $T$ be an affine scheme. Let $Z_T$ and $X_T$ denote $Z\times_S T$ and $X\times_S T$, respectively. Suppose we are given two $T$-morphisms $f_1,f_2\colon Z_T\to
X_T$ and consider $Q:=\underline{\Isom}_{Z_T}(f_1,f_2)=X\times_{X\times_S X} Z_T$. Then $Q\to Z_T$ is
representable and of finite presentation. If $\pi \colon Z_T\to T$ denotes
the structure morphism, then $\pi_*Q$ is an algebraic space which is locally of finite presentation, being the pull-back of the diagonal of $\underline{\Hom}_S(Z,X)$
along the morphism $T\to \underline{\Hom}_S(Z,X)\times_S \underline{\Hom}_S(Z,X)$
corresponding to $(f_1,f_2)$.

Let $P$ be one of the properties: affine, quasi-affine, separated, quasi-separated. Assume 
that $\Delta_X$ has $P$; then $Q \to Z_T$ has $P$. We claim that the induced morphism 
$\pi_*Q\to T$ has $P$. For the properties affine and quasi-affine, this is \cite[Thm.~2.3 (i),(ii)]{hallj_dary_g_hilb_quot}. For quasi-separated (resp.~separated), this
is~\cite[Thm.~2.3 (ii),(iv)]{hallj_dary_g_hilb_quot} applied to the
quasi-affine morphism (resp.~closed immersion) $Q\to Q\times_Z Q$ and the Weil
restriction $\pi_*Q\to \pi_*Q\times_T \pi_*Q=\pi_*(Q\times_Z Q)$. In particular, we have proved that $\underline{\Hom}_S(Z,X)$ is algebraic and locally of finite presentation with quasi-separated diagonal over $S$. 

Now by Theorem \ref{T:rel-boundedness-affine-fibers}, $\Delta_{\underline{\Hom}_S(Z,X)/S}=\underline{\Hom}_S(Z,\Delta_{X/S})$ is of finite presentation, so $\underline{\Hom}_S(Z,X)$ is also quasi-separated. It remains to prove that it has affine stabilizers. To see this, we may assume that $T$ is the spectrum of an algebraically closed field. In this situation, either $\pi_*Q$ is empty or $f_1\simeq f_2$; it suffices to treat the latter case. In the latter case, $T \to X\times_S X$ factors through the diagonal $\Delta_{X/S} \colon X \to X\times_S X$, so it is sufficient to prove that $\underline{\Hom}_S(Z,I_{X/S})$, where $I_{X/S} \colon X\times_{X\times_S X} X \to X$ is the inertia stack, has affine fibers. But $I_{X/S}$ defines a group over $X$ with affine fibers, and the result follows from Theorem \ref{T:rel-boundedness-affine-fibers}.
\end{proof}
\begin{lemma}\label{L:weilr-hom}
Let $f\colon Z\to S$ be a proper and flat morphism of finite presentation
between algebraic stacks. For any morphism $X\to Z$ of algebraic stacks, the
forgetful morphism $f_*X \to \underline{\Hom}_S(Z,X)$ is an open immersion.
\end{lemma}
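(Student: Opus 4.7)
The plan is to realize $f_*X$ as a $2$-cartesian pullback inside $\underline{\Hom}_S(Z,X)$. Unwinding the definition of Weil restriction, a $T$-point of $f_*X$ amounts to an $S$-morphism $h\colon Z_T\to X$ such that $\pi\circ h$ coincides with the projection $p\colon Z_T\to Z$, where $\pi\colon X\to Z$ denotes the structure morphism. This identifies the forgetful morphism as the top row of the $2$-cartesian square
\[
\xymatrix@C=1.8cm{
f_*X\ar[r]\ar[d] & \underline{\Hom}_S(Z,X) \ar[d]^{\pi\circ -} \\
S \ar[r]^-{\mathrm{id}_Z} & \underline{\Hom}_S(Z,Z)\ar@{}[ul]|\square,
}
\]
whose bottom row is the section corresponding to the identity $\mathrm{id}_Z\colon Z\to Z$. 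Since open immersions are stable under base change, it suffices to prove that $\mathrm{id}_Z\colon S\hookrightarrow \underline{\Hom}_S(Z,Z)$ is an open immersion.

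To check this, I would pull back $\mathrm{id}_Z$ along an arbitrary test morphism $T\to \underline{\Hom}_S(Z,Z)$, corresponding to an $S$-morphism $g\colon Z_T\to Z$. Since $f$ is proper, it is separated, so the diagonal $\Delta_{Z/S}\colon Z\to Z\times_S Z$ is a finitely presented closed immersion. Pulling $\Delta_{Z/S}$ back along $(g,p)\colon Z_T\to Z\times_S Z$ produces a finitely presented closed substack $E\subseteq Z_T$, whose open complement $W\subseteq Z_T$ is the locus on which $g$ and $p$ disagree. The fiber product $T\times_{\underline{\Hom}_S(Z,Z)}S$ is then represented by the subfunctor of $T$ consisting of those morphisms $T''\to T$ for which $Z_{T''}\subseteq E_{T''}$, i.e., for which $T''\to T$ misses the image of $W$ in $T$.

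The main obstacle is to identify this subfunctor with a finitely presented open subscheme of $T$. The combination of flatness and properness of $f$ is essential here: flatness plus finite presentation of $Z_T\to T$ implies it is an open morphism, so the image $f_T(W)\subseteq T$ is open, while properness controls the way $E$ degenerates along $T$. I would complete the argument by endowing the locus that supplements $f_T(W)$ with its natural scheme structure—pulling back to a smooth cover of $T$ and applying fppf descent if necessary—and then verify the universal property functorially by testing against artinian rings via the infinitesimal lifting criterion together with the separatedness of $Z\to S$. Once the identity section is shown to be an open immersion, the lemma follows by base change along $\pi\circ -$.
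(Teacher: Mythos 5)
Your identification of $f_*X$ with the $2$-fibre product $\underline{\Hom}_S(Z,X)\times_{\underline{\Hom}_S(Z,Z)}S$ along the identity section is a faithful translation of the forgetful morphism, but the statement you reduce to---that the identity section $S\to\underline{\Hom}_S(Z,Z)$ is an open immersion---is false, so the argument cannot be completed along these lines. Take $S=\Spec k$ and $Z=\mathbb{P}^1_k$: the identity is a $k$-point of the degree-one component $\mathrm{PGL}_{2,k}\subseteq \underline{\Hom}_k(\mathbb{P}^1,\mathbb{P}^1)$, a smooth connected $3$-dimensional group scheme, so $\{\mathrm{id}\}$ is a closed, nowhere open point. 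Your own topological analysis already detects the problem: the disagreement locus $W=Z_T\setminus E$ is open, $f_T$ is flat and of finite presentation and hence an open map, so $f_T(W)$ is open and the locus you need---its complement---is \emph{closed} in $T$. No choice of scheme structure on that closed subset, and no amount of testing against artinian rings, can make a non-open monomorphism into an open immersion. (A secondary issue: when $Z$ is a stack, $\Delta_{Z/S}$ is only proper, not a closed immersion, so ``the locus where $g$ and $p$ agree'' must be an $\underline{\Isom}$-space rather than a closed substack; but the open/closed problem already occurs for schemes.)

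The paper's proof takes a different and essentially weaker test: a $T$-point $h$ of $\underline{\Hom}_S(Z,X)$ is probed by asking whether the induced endomorphism $c=g_T\circ h$ of $Z\times_S T$ is an \emph{isomorphism}, not whether it is (2-isomorphic to) the identity. The isomorphism condition is invariant under precomposition by automorphisms of $Z\times_S T$ and is genuinely open; the paper proves its openness by using the diagonal of $c$ to reduce to the case where $c$ is representable, then a closed immersion, and finally by examining the \'etale locus. The identity condition that your fibre product cuts out is strictly stronger, differing from the isomorphism condition exactly by $\underline{\Aut}_S(Z)$, and as you in effect computed it is closed rather than open. This is precisely the point where your route and the paper's diverge, and it is the reason the paper phrases its reduction in terms of $c$ being an isomorphism. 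To produce a correct argument you must work with the isomorphism locus (and then address how that locus compares with the fibre product you wrote down), rather than with the identity section of $\underline{\Hom}_S(Z,Z)$.
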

\begin{proof}
  It is sufficient to prove that if
  $T$ is an affine $S$-scheme and $h\colon Z\times_S T \to X\times_S
  T$ is a $T$-morphism, then the locus of points where $f_T\circ h\colon
  Z \times_S T \to Z\times_S T$ is an isomorphism is open on
  $T$.

  First, consider the diagonal of $f_T\circ h$. This morphism is
  proper and representable and the locus on $T$ where this map is a
  closed immersion is open~\cite[Lem.~1.8 (iii)]{MR2821738}.
  We may thus assume that $f_T\circ h$ is
  representable. Repeating the argument on $f_T\circ h$, we may assume that
  $f_T\circ h$ is a closed immersion. That the locus in $T$
  where $f_T\circ h$ is an isomorphism is open now follows easily by
  studying the \'etale locus of $f_T\circ h$, cf.\ \cite[Lem.~5.2]{MR2239345}.
  The result follows.
\end{proof}

\begin{proof}[Proof of Theorem~\ref{T:main_alg_weilr}]
  That $f_*X\to S$ is algebraic, locally of finite presentation, with
  quasi-compact and quasi-separated diagonal and affine stabilizers follows
  from Theorem~\ref{T:main_alg_hom} and Lemma~\ref{L:weilr-hom}. The
  additional separation properties of $f_*X$ follows
  from~\cite[Thm.~2.3 (i), (ii) \& (iv)]{hallj_dary_g_hilb_quot} applied
  to the diagonal and double diagonal of $X\to Z$.
\end{proof}

As claimed in the introduction, we now extend \cite[Thm.~2.3 \&
Cor.~2.4]{hallj_dary_g_hilb_quot}. The statement of the following
corollary uses the notion of a morphism of algebraic stacks that is
\emph{locally of approximation type} \cite[\S
1]{hallj_dary_g_hilb_quot}. A trivial example of a morphism
locally of approximation type is a quasi-separated morphism
that is locally of finite presentation. It is hoped that every
quasi-separated morphism of algebraic stacks is locally of
approximation type, but this is currently unknown. It is known,
however, that morphisms of algebraic stacks that have quasi-finite
and locally separated diagonal are locally of approximation
type~\cite{rydh-2009}. In particular, all quasi-separated morphisms of
algebraic stacks that are relatively Deligne--Mumford are locally of
approximation type.
\begin{corollary}\label{C:main_alg_loc-approx}
  Let $f\colon Z \to S$ be a proper and flat morphism of finite
  presentation between algebraic stacks.
  \begin{enumerate}
  \item\label{CI:loc-approx-type}
    Let $h\colon X \to S$ be a morphism of algebraic stacks with
    affine stabilizers that is locally of approximation type. Then
    $\underline{\Hom}_S(Z,X)$ is algebraic and locally of approximation type
    with affine stabilizers. If $h$ is locally of finite presentation, then so
    is $\underline{\Hom}_S(Z,X)\to S$. If the diagonal of $h$ is affine
    (resp.~quasi-affine, resp.~separated), then so is the diagonal
    of $\underline{\Hom}_S(Z,X)\to S$.
  \item\label{CI:weilr}
    Let $g\colon X \to Z$ be a morphism of algebraic stacks
    such that $f\circ g: X \to S$ has affine stabilizers and is
    locally of approximation type. Then
    the $S$-stack $f_*X$ is algebraic and locally of approximation type
    with affine stabilizers. If $g$ is locally of finite presentation, then so
    is $f_*X\to S$. If the diagonal of $g$ is affine (resp.~quasi-affine,
    resp.~separated), then so is the diagonal of $f_*X\to S$.
  \end{enumerate}
\end{corollary}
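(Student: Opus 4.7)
The plan is to reduce both parts of Corollary~\ref{C:main_alg_loc-approx} to the finite-presentation cases already treated in Theorems~\ref{T:main_alg_hom} and~\ref{T:main_alg_weilr} by means of an approximation argument. Part~\ref{CI:weilr} will follow from part~\ref{CI:loc-approx-type} in exactly the manner that Theorem~\ref{T:main_alg_weilr} was deduced from Theorem~\ref{T:main_alg_hom}: Lemma~\ref{L:weilr-hom} exhibits the forgetful morphism $f_*X \to \underline{\Hom}_S(Z,X)$ as an open immersion, so algebraicity, being locally of approximation type, finite presentation (when applicable), and affine stabilizers pass from the Hom-stack to the Weil restriction. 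The separation refinements for the diagonal are then obtained by applying \cite[Thm.~2.3 (i), (ii) \& (iv)]{hallj_dary_g_hilb_quot} to the diagonal and double diagonal of $g\colon X\to Z$, exactly as in the proof of Theorem~\ref{T:main_alg_weilr}.

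For part~\ref{CI:loc-approx-type}, working \'etale-locally on $S$ we reduce to $S$ affine. Since $h\colon X\to S$ is locally of approximation type, the definition of \cite[\S1]{hallj_dary_g_hilb_quot} lets us find, smooth-locally on $X$, a presentation of $X$ as a filtered inverse limit $X=\varprojlim_\lambda X_\lambda$ of $S$-stacks of finite presentation with affine transition morphisms. Because the transition maps are affine and $X$ has affine stabilizers, we may arrange (after truncating the system) that each $X_\lambda\to S$ has affine stabilizers as well. Applying Theorem~\ref{T:main_alg_hom} to each $X_\lambda\to S$ produces algebraic stacks $\underline{\Hom}_S(Z,X_\lambda)\to S$ that are locally of finite presentation with quasi-compact and quasi-separated diagonal and affine stabilizers, and with the asserted separation on the diagonal whenever $g$, respectively $h$, enjoys the corresponding property for all $\lambda$.

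The next step is to identify $\underline{\Hom}_S(Z,X)$ with $\varprojlim_\lambda \underline{\Hom}_S(Z,X_\lambda)$. For any affine $S$-scheme $T$, the properness and finite presentation of $Z\to S$ imply that $Z_T$ is quasi-compact and quasi-separated, so by standard limit methods a morphism $Z_T\to X$ is the same datum as a compatible system of morphisms $Z_T\to X_\lambda$; this identifies the two $S$-stacks. The transition morphisms $\underline{\Hom}_S(Z,X_\lambda)\to \underline{\Hom}_S(Z,X_\mu)$ are induced by the affine morphisms $X_\lambda\to X_\mu$; a direct computation of their fibers exhibits each such transition as the Weil restriction along $Z_T\to T$ of an affine $Z_T$-scheme, hence affine by \cite[Thm.~2.3(i)]{hallj_dary_g_hilb_quot}. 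Consequently the inverse limit $\underline{\Hom}_S(Z,X)$ is algebraic and locally of approximation type over $S$, and the separation properties of its diagonal are read off from the corresponding properties of $\underline{\Hom}_S(Z,X_\lambda)$ in the limit.

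The main obstacle will be verifying that $\underline{\Hom}_S(Z,-)$ genuinely commutes with the chosen affine filtered limit and that the affine-stabilizer hypothesis is preserved under these manipulations; here one has to be careful because the approximation is only smooth-local on $X$, and one must check that the resulting local descriptions of $\underline{\Hom}_S(Z,X)$ glue consistently. Once this bookkeeping is done, the refinement to the locally finitely presented case is automatic (each $X_\lambda$ may be taken equal to $X$), and the statements about the diagonal follow mechanically from the analogous facts at each finite-presentation level.
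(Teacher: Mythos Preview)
Your reduction of part~\ref{CI:weilr} to part~\ref{CI:loc-approx-type} via Lemma~\ref{L:weilr-hom} and \cite[Thm.~2.3]{hallj_dary_g_hilb_quot} matches the paper exactly. The difficulty is in part~\ref{CI:loc-approx-type}, where your approach diverges from the paper's and carries a gap you yourself flag but do not close.

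You propose to write $X$, smooth-locally on $X$, as a filtered inverse limit $\varprojlim_\lambda X_\lambda$ with affine bonding maps, and then to identify $\underline{\Hom}_S(Z,X)$ with $\varprojlim_\lambda \underline{\Hom}_S(Z,X_\lambda)$. The problem is that $\underline{\Hom}_S(Z,-)$ does not localize on the target: a smooth cover $U\to X$ does not produce a smooth cover $\underline{\Hom}_S(Z,U)\to \underline{\Hom}_S(Z,X)$ (the latter map is typically far from surjective, since a morphism $Z_T\to X$ need not lift to $U$). So there is no mechanism for the local inverse-limit presentations of $X$ to glue to any presentation of $\underline{\Hom}_S(Z,X)$. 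Your final paragraph names this as the ``main obstacle,'' but it is not bookkeeping; it is the whole point, and without it the argument does not go through. A secondary issue: your claim that one may ``arrange (after truncating the system) that each $X_\lambda\to S$ has affine stabilizers'' is not automatic; in the paper this step requires invoking \cite[Thm.~2.8]{hallj_dary_alg_groups_classifying} and \cite[Thms.~D \&~7.10]{rydh-2009}.

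The paper sidesteps both problems by localizing on the \emph{base} rather than on the target. Using \cite[Lem.~1.1]{hallj_dary_g_hilb_quot}, after passing to an fppf cover $\{S_i\to S\}$ one obtains a single factorization $X\times_S S_i \xrightarrow{q} X_i^0 \to S_i$ with $q$ affine and $X_i^0\to S_i$ of finite presentation; one then invokes the results just cited to arrange that $X_i^0\to S_i$ has affine stabilizers (or the other desired separation property). Since $\underline{\Hom}_S(Z,-)$ commutes with base change on $S$, one may replace $S$ by $S_i$. Theorem~\ref{T:main_alg_hom} handles $\underline{\Hom}_S(Z,X_0)$, and \cite[Thm.~2.3(i)]{hallj_dary_g_hilb_quot} shows that $\underline{\Hom}_S(Z,X)\to \underline{\Hom}_S(Z,X_0)$ is affine. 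No inverse limits are needed, and no gluing over the target is required.
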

\begin{proof}
  For \ref{CI:loc-approx-type}, we may
  immediately reduce to the situation where $S$ is an affine
  scheme. Since $f$ is quasi-compact, we may further assume that $h$
  is quasi-compact. By \cite[Lem.~1.1]{hallj_dary_g_hilb_quot}, there
  is an fppf covering $\{S_i \to S\}$ such that each $S_i$ is affine
  and $X\times_S S_i \to S_i$ factors as $X\times_S S_i \to X^0_i \to
  S_i$, where $X^0_i \to S_i$ is of finite presentation and $X\times_S
  S_i \to X_i^0$ is affine. Combining the results of
  \cite[Thm.~2.8]{hallj_dary_alg_groups_classifying} with
  \cite[Thms.~D \&~7.10]{rydh-2009}, we can arrange so that each $X_i^0 \to
  S$ has affine stabilizers (or has one of the other desired
  separation properties).

  Thus, we may now replace $S$ by $S_i$ and
  may assume that $X\to S$ factors as $X \xrightarrow{q} X_0 \to S$,
  where $q$ is affine and $X_0 \to S$ is of finite presentation with
  the appropriate separation condition. By Theorem
  \ref{T:main_alg_hom}, the stack $\underline{\Hom}_S(Z,X_0)$ is algebraic and locally of finite presentation with
  the appropriate separation condition.  By
  \cite[Thm.~2.3(i)]{hallj_dary_g_hilb_quot}, the morphism
  $\underline{\Hom}_S(Z,X) \to \underline{\Hom}_S(Z,X_0)$ is
  representable by affine morphisms; the result follows.

  For~\ref{CI:weilr} we argue exactly as in the proof of
  Theorem~\ref{T:main_alg_weilr}.
\end{proof}

\section{Counterexamples}
In this section we give four counter-examples (Theorems~\ref{T:failure_tensoriality}, \ref{T:failure_full}, \ref{T:failure_hom-stack}, and \ref{T:failure_completions}):
\begin{itemize}
\item in Theorems~\ref{T:main_tannaka:aff_stab},
  \ref{T:main_alg_hom} and
  \ref{T:main_tannaka_non-noeth}\ref{ti:tannaka_faith:noemb/noeth}
  it is necessary that $X$ has affine stabilizer
  groups;
\item in Theorem~\ref{T:main_tannaka_non-noeth}\ref{ti:tannaka_full},
  it is necessary that $X$ is affine-pointed;
\item in Theorem~\ref{T:main_alg_hom}, it is necessary that $X$ has affine
  stabilizer groups; and
\item in Corollary~\ref{C:application-completions}, it is necessary that $X$
  has affine stabilizer groups.
\end{itemize}

\begin{theorem}\label{T:failure_tensoriality}
  Let $X$ be a quasi-separated algebraic stack. If $k$ is an algebraically
  closed field and $x\colon \Spec k\to X$ is a point with non-affine
  stabilizer, then $\Aut(x)\to \Aut_{\otimes}(x^*)$ is not injective.
  In particular, $\TF{X}(\Spec k)$ is not faithful and $X$ is not
  tensorial.
\end{theorem}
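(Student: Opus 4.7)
The plan is to show that the kernel of the affinisation $\rho\colon G \to G^{\aff}$ of the stabiliser $G = \Aut(x)$ is annihilated by $\omega_X(\Spec k)$, and that this kernel contains a non-trivial $k$-rational point precisely because $G$ fails to be affine.

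First I would set up the stabiliser and its action on fibres. Let $G = \Aut(x) = \Spec k \times_{X \times X} X$; because $X$ is quasi-separated, its inertia is quasi-compact and quasi-separated, so $G$ is a quasi-compact and quasi-separated group scheme over $k$. For every $F \in \QCoh(X)$, an element $g \in G(k) = \Aut(x)$ induces, via $2$-functoriality of pullback, an automorphism $\omega_X(\Spec k)(g)_F \colon x^*F \to x^*F$. After (possibly) shrinking $X$ to a quasi-compact open substack around $x$ (which does not change the stabiliser), Example~\ref{E:stk_fg_description} lets me write $F = \varinjlim_\alpha F_\alpha$ as a filtered colimit of its finitely generated subsheaves. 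Each $x^*F_\alpha$ is a finite-dimensional $k$-vector space, and the inertia action assembles the collection of fibrewise automorphisms into a morphism of group schemes $\phi_\alpha \colon G \to \mathrm{GL}(x^*F_\alpha)$, with $\omega_X(\Spec k)(g)_{F_\alpha} = \phi_\alpha(g)$.

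Next, I would invoke the universal affinisation $\rho \colon G \to G^{\aff} := \Spec \Gamma(G,\sO_G)$. Because any morphism from $G$ to an affine scheme factors uniquely through $\rho$, each $\phi_\alpha$ factors through $G^{\aff}$; hence $K := \ker \rho$ acts trivially on every $x^*F_\alpha$. Since $x^*$ preserves filtered colimits, $K$ also acts trivially on every $x^*F$, so any $g \in K(k)$ yields the identity natural transformation on $x^*$.

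Finally I would produce a non-trivial $k$-point of $K$. Since $G$ is non-affine, $\rho$ is not an isomorphism, so $K$ is a non-trivial closed subgroup. A direct calculation gives $\Gamma(K,\sO_K) = k$, so $K$ is anti-affine (and in particular connected); as finite group schemes over a field are affine, $K$ must have positive dimension. Over the algebraically closed field $k$, any positive-dimensional anti-affine group scheme contains a non-trivial abelian variety (by Brion's structure theorem, or more concretely by passing to $(G^0)_{\mathrm{red}}$ and applying Chevalley's decomposition to the resulting smooth connected group), so $K(k) \setminus \{1\} \neq \emptyset$. Any such $g$ satisfies $g \neq 1$ in $\Aut(x)$ while $\omega_X(\Spec k)(g) = \mathrm{id}_{x^*}$, proving that $\omega_X(\Spec k)$ is not faithful and hence that $X$ is not tensorial.

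The main obstacle is this last step: rigorously exhibiting a non-identity $k$-rational point in the anti-affine kernel $K$ when the stabiliser $G$ is only assumed quasi-compact and quasi-separated rather than of finite type. Here a bit of structure theory for group schemes over a field is unavoidable, in order to rule out pathological possibilities (all of which turn out to be affine and thus incompatible with $K$ being anti-affine and non-trivial). A secondary, essentially tautological, point is checking that the natural transformation $\omega_X(\Spec k)(g)$ really does coincide with the representation-theoretic evaluation $\phi_\alpha(g)$ on each $x^*F_\alpha$.
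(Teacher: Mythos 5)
Your argument is correct, and it takes a genuinely more direct route than the paper's. The paper factors $x$ through $B_kH$, where $H=(G_x)_{\ant}$ is the largest anti-affine subgroup of the stabilizer: it invokes the theorem that the residual gerbe of a quasi-separated stack is quasi-affine over $X$ (so that $B_kH\to X$ is quasi-affine, $G_x/H$ being affine), uses Brion's result that $p^*\colon\QCoh(B_kH)\to\QCoh(\Spec k)$ is an equivalence of tensor categories, and then transfers non-faithfulness from $B_kH$ to $X$ via Lemma~\ref{L:qaff-tensorial:rel}. You instead observe directly that every fibre $x^*F_\alpha$ of a finitely generated sheaf is a finite-dimensional representation of $G=\Aut(x)$, hence factors through the affinization $G\to G^{\aff}$, so the anti-affine kernel $K=G_{\ant}$ dies in $\Aut_\otimes(x^*)$. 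Both proofs ultimately rest on the same structure theorem (\cite[\S III.3.8]{MR0302656}: $K$ is anti-affine, smooth and connected with $G/K$ affine, so $K$ is non-trivial and positive-dimensional and thus has non-identity $k$-points), and your step ``representations of $G$ factor through $G^{\aff}$, hence are trivial on $K$'' is exactly the content of the equivalence $\QCoh(B_kK)\simeq\QCoh(\Spec k)$ that the paper cites. What your version buys is independence from the residual-gerbe machinery and the quasi-affine reduction lemma; what the paper's version buys is brevity given tools it needs elsewhere anyway, and it packages the computation into a statement ($B_kH$ is ``tensorially invisible'') that is reused in Theorems~\ref{T:failure_full}--\ref{T:failure_completions}.

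Two small reassurances on the points you flag. The ``main obstacle'' is not one: the diagonal of an algebraic stack is locally of finite type, so $G=\Spec k\times_{X\times X}\Spec k$ is a group algebraic space of finite type over $k$ (quasi-compactness coming from quasi-separatedness of $X$), hence a group scheme of finite type by Artin's theorem; the affinization formalism and the anti-affine structure theory therefore apply verbatim, and no pathological non-finite-type groups arise. Also, the final step needs much less than Brion's structure theorem: a non-trivial anti-affine group is smooth, connected and of positive dimension, so over algebraically closed $k$ it has infinitely many rational points. Finally, note that $K$ being non-trivial follows not merely from $\rho$ failing to be an isomorphism but from the fact that $\rho$ is faithfully flat with $G/K\cong G^{\aff}$, so triviality of $K$ would force $G$ affine; your write-up should lean on that rather than on ``$\rho$ is not an isomorphism'' alone.
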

\begin{proof}
  By assumption, the stabilizer group scheme $G_x$ of $x$ is not affine.
  Let $H=(G_x)_{\ant}$ be the largest anti-affine subgroup of $G_x$;
  then $H$ is a non-trivial anti-affine group scheme over $k$
  and the quotient group scheme $G_x/H$ is
  affine~\cite[\S III.3.8]{MR0302656}. The induced morphism
  $B_kH \to B_kG_x\to X$ is thus quasi-affine by~\cite[Thm.~B.2]{MR2774654}.

  By
  \cite[Lem.~1.1]{MR2488561}, the morphism $p\colon \Spec k
  \to B_kH$ induces an equivalence of abelian tensor 
  categories $p^*\colon \QCoh(B_kH) \to \QCoh(\Spec k)$. Since
  $\Aut(p)=H(k)\neq\{\id{p}\}=\Aut_\otimes(p^*)$, the functor
  $\TF{B_kH}(\Spec k)$ is
  not faithful. Hence $\TF{X}(\Spec k)$ is not faithful by
  Lemma~\ref{L:qaff-tensorial:rel}.
\end{proof}
We also have the following theorem.
\begin{theorem}\label{T:failure_full}
  Let $X$ be a quasi-compact and quasi-separated algebraic stack with
  affine stabilizers. If $k$ is a field and
  $x_0\colon \Spec k \to X$ is a non-affine morphism, then there exists
  a field extension $K/k$ and a point $y\colon \Spec K\to X$ such that
  $\Isom(y,x) \to \Hom_{\otimes}(y^*,x^*)$ is not surjective, where $x$
  denotes the $K$-point corresponding to $x_0$. In particular,
  $\TF{X}(\Spec K)$ is not full.
\end{theorem}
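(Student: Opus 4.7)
The plan is to apply \cite[Thm.~B.2]{MR2774654} to factor $x_0$ as $\Spec k \xrightarrow{i} V \xrightarrow{v} X$, where $V := \sSpec_X((x_0)_*\sO_k)$, $v$ is affine, and $i$ is a quasi-compact open immersion. Since $x_0$ is not affine, $i$ is not an isomorphism, so the closed complement $Z := V \setminus i(\Spec k)$ is non-empty. A key structural observation is that $A := (x_0)_*\sO_k$ naturally carries a $k$-algebra structure, so every residue field of $V$ is canonically a field extension of $k$. I would then select a closed point $z \in Z$, set $K := \kappa(z)$, take $y' \colon \Spec K \to V$ to be the corresponding morphism, and define $y := v \circ y' \colon \Spec K \to X$ and $x := x_0 \circ \pi$, where $\pi \colon \Spec K \to \Spec k$ is the inclusion.

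To construct $\gamma$, I would use the identification of $\Hom_{c\otimes}(y^*, x^*)$ with the set of $\sO_X$-algebra homomorphisms $x_*\sO_K \to y_*\sO_K$ exploited in the proof of Proposition~\ref{P:quaff_full_faithful}. Concretely, $y'$ endows $y_*\sO_K$ with an $A$-algebra structure by the universal property of $V$, and combined with the canonical $K$-algebra structure on $y_*\sO_K$ this yields an $\sO_X$-algebra homomorphism $\psi \colon x_*\sO_K = A \otimes_k K \to y_*\sO_K$; the corresponding natural transformation $\gamma \colon y^* \Rightarrow x^*$ is the desired element of $\Hom_{c\otimes}(y^*, x^*)$. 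Here the main subtlety that must be checked is that the two \emph{a priori} distinct $k$-algebra structures on $y_*\sO_K$ (one from $A$ via $y'$, the other from $k \hookrightarrow K$) coincide; this should follow because both agree with the intrinsic $k$-algebra structure on $y_*\sO_K$ induced by $y$.

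To conclude that $\gamma$ is not realizable, I would show that $\Isom(y, x) = \emptyset$. Any $2$-isomorphism $\sigma \colon y \Rightarrow x$ would induce an $\sO_X$-algebra isomorphism $x_*\sO_K \cong y_*\sO_K$, hence an isomorphism $V_x := \sSpec_X(x_*\sO_K) \cong V_y := \sSpec_X(y_*\sO_K)$ of affinizations over $X$. But $V_x$ is identified with the base change $V \otimes_k K$, whose underlying topological space has at least two points (one over $i(\Spec k)$ and one over $z$), whereas $V_y = \sSpec_V((y')_*\sO_K)$ is supported at the closed point $\{z\}$ and therefore has exactly one topological point, which rules out such an isomorphism.
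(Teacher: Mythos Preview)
Your strategy matches the paper's: pass to the affine hull $V=\sSpec_X((x_0)_*k)$, find a closed point $z$ in the complement of the open immersion $\Spec k\hookrightarrow V$, and manufacture a tensor natural transformation $\gamma\colon y^*\Rightarrow x^*$ from an algebra map on pushforwards. The paper streamlines this by first \emph{replacing} $X$ with $V$ via Lemma~\ref{L:qaff-tensorial:rel} (so $x$ becomes an open immersion with $\sO_X\cong x_*k$) and then base-changing everything to $K$; after these reductions the algebra map is simply the composite $x_*k\cong\sO_X\twoheadrightarrow\sO_{\mathcal G_y}\to y_*k$, and the calculations you are juggling with $A\otimes_k K$ and compatibility of $k$-structures evaporate.

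Two points where your write-up needs tightening. First, the ``identification of $\Hom_{c\otimes}(y^*,x^*)$ with $\sO_X$-algebra maps $x_*\sO_K\to y_*\sO_K$'' is not established in Proposition~\ref{P:quaff_full_faithful}; that proof only extracts an algebra map \emph{from} a given $\gamma$. To go the other way you must extend your $\psi$ to a natural transformation $\gamma^\vee\colon x_*\Rightarrow y_*$ (using that every $K$-vector space is a direct sum of copies of $K$ and that $x_*,y_*$ preserve direct sums), take its adjoint, and then check the resulting $\gamma$ is monoidal. The paper does precisely this and relegates the monoidality to ``a simple calculation''.

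Second, your non-realizability argument overshoots and has a gap. From ``$V_y$ is supported at $\{z\}$'' you cannot conclude $|V_y|$ is a single point: an affine morphism with one-point image can have arbitrarily large source. (The conclusion is in fact true, but only because $y'\colon\Spec K\to V$ factors through the residual gerbe at $z$, which has affine diagonal and is therefore affine-pointed; hence $y$ is \emph{affine} and $V_y\cong\Spec K$.) The paper bypasses this entirely: since $\gamma^\vee(k)\colon \sO_X\to y_*k$ factors through the proper quotient $\sO_{\mathcal G_y}$, it is not injective, so $\gamma$ is not an isomorphism, and therefore cannot lie in the image of $\Isom(y,x)$.
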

\begin{proof}
  To simplify notation, we let $x=x_0$.
  Since $X$ has quasi-compact diagonal, $x$ is
  quasi-affine~\cite[Thm.~B.2]{MR2774654}. By Lemma
  \ref{L:qaff-tensorial:rel}, we may replace $X$ by $\sSpec_X
  (x_*k)$ and consequently assume that $x$ is a
  quasi-compact open immersion and $\sO_X \to x_*k$ is an
  isomorphism. In particular, $x$ is a section to a morphism $f\colon
  X \to \Spec k$. Since $x$ is not affine, it follows that there
  exists a closed point $y$ disjoint from the image of $x$. In particular,
  there is a field extension $K/k$ and a $k$-morphism $y\colon \Spec K \to X$
  whose image is a closed point disjoint from $x$.
  
  We now base change the entire situation by $\Spec K \to \Spec
  k$. This results in two morphisms $x_K$, $y_K\colon \Spec K \to
  X\otimes_k K$, where $x_K$ is a quasi-compact open immersion such
  that $\sO_{X\otimes_k K} \cong (x_K)_*K$ and $y_K$ has image a
  closed point disjoint from the image of $x_K$. We replace
  $X$, $k$, $x$, and $y$ by $X\otimes_kK$, $K$, $x_K$, and $y_K$ respectively.

  Let $\mathcal{G}_y \subseteq X$ be the residual gerbe associated to
  $y$, which is a closed immersion. We define a natural transformation
  $\gamma^\vee \colon x_* \Rightarrow y_*$ at $k$ to be
  the composition $x_*k \cong \sO_X \surj \sO_{\mathcal{G}_y} \to
  y_*k$ and extend to all of $\QCoh(\Spec k)$ by taking
  colimits. By adjunction, there is an induced natural transformation
  $\gamma\colon y^* \to x^*$. A simple calculation shows that $\gamma$
  is a natural transformation of cocontinuous tensor functors. Since
  its adjoint $\gamma^\vee$ is not an isomorphism, $\gamma$ is not an
  isomorphism; thus $\gamma$ is not realizable. The result follows.
\end{proof}
The following lemma is a variant of \cite[Ex.~4.12]{2014arXiv1404.7483B}, which B.~Bhatt communicated to the authors.
\begin{lemma}\label{L:degeneration-of-nodal-curve}
Let $k$ be an algebraically closed field and let $G/k$ be an anti-affine group
scheme of finite type. Let $Z/k$ be a regular scheme with a closed
subscheme $C$ that is a nodal curve over $k$. Then there is a compatible system
of $G$-torsors $E_n\to C^{[n]}$ such that there does not exist a $G$-torsor
$E\to Z$ that restricts to the $E_n$s.
\end{lemma}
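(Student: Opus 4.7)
\textit{Proof plan.} The strategy is to construct a non-trivial $G$-torsor $E_0$ on $C$ using the gluing at a node, extend it to a compatible system on the infinitesimal neighborhoods by vanishing of $H^2$ on curves, and obtain a contradiction from the regularity of $Z$ at the node.

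Since $G$ is anti-affine and non-trivial over algebraically closed $k$, it is smooth, commutative, and $G(k)$ is an infinite group. Fix a node $p \in C$, let $\nu \colon \widetilde{C} \to C$ be the partial normalization at $p$, and $\nu^{-1}(p) = \{p_1, p_2\}$. The square
\[
\xymatrix{\{p_1,p_2\}\ar@{^{(}->}[r]\ar[d] & \widetilde{C}\ar[d]^\nu \\ \{p\}\ar@{^{(}->}[r] & C}
\]
is a pushout of schemes, so a $G$-torsor on $C$ is equivalent to a $G$-torsor on $\widetilde{C}$ together with an identification of fibers at $p_1$ and $p_2$. For $g \in G(k)$, let $E_0$ be the $G$-torsor obtained by gluing the trivial torsor on $\widetilde{C}$ via translation by $g$. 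Mayer--Vietoris identifies $\ker\bigl(\nu^*\colon H^1(C,G)\to H^1(\widetilde{C},G)\bigr)$ with $G(k)/H$, where $H=\{\tau(p_1)\tau(p_2)^{-1}:\tau\in\mathrm{Mor}(\widetilde{C},G)\}$; the class $[E_0]$ corresponds to $g\bmod H$. To extend $E_0$ to a compatible system $\{E_n\}$ on the $C^{[n]}$, observe that the obstruction along the square-zero thickening $C^{[n]}\hookrightarrow C^{[n+1]}$ lies in $H^2(C,\mathrm{Lie}(G)\otimes_k\mathcal{I}_C^{n+1}/\mathcal{I}_C^{n+2})$, which vanishes because $C$ is one-dimensional; choose lifts inductively.

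If a $G$-torsor $E\to Z$ restricted to $\{E_n\}$, then $E|_C\cong E_0$ and $E|_{\widetilde{C}}$ would be trivial (since $\nu^*[E_0]=0$). Pick a trivialization $s\colon\widetilde{C}\to G$; the gluing forces $s(p_1)s(p_2)^{-1}\equiv g\pmod H$. The composition $\widetilde{C}\hookrightarrow C\hookrightarrow Z$ sends both $p_1,p_2$ to $p\in Z$, and $G$-torsors are trivial on the regular complete local ring $\widehat{\mathcal{O}}_{Z,p}\cong k[[t_1,\ldots,t_n]]$ (since $G$ is smooth and the residue field is algebraically closed). A patching argument---using a local trivialization $t$ of $E$ on an \'etale neighborhood of $p$ in $Z$, which by construction satisfies $t(p_1)=t(p_2)$, together with adjustments of $s$ by morphisms in $\mathrm{Mor}(\widetilde{C},G)$---then shows $s(p_1)s(p_2)^{-1}\in H$, forcing $g\equiv 0\pmod H$. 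Choosing $g$ in a non-trivial coset of $H$ (possible whenever $H\subsetneq G(k)$, which holds, for instance, when $\widetilde{C}$ is rational so that $H=\{e\}$) yields the contradiction.

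The main obstacle is the patching step: making precise how the local triviality of $E$ near $p\in Z$ forces $s(p_1)s(p_2)^{-1}\in H$. The essential geometric content is that the regular ambient $Z$ cannot see the nodal gluing direction; the anti-affineness of $G$ ensures $G$ is commutative and smooth so that the torsor classification via Mayer--Vietoris and the triviality on complete regular local rings both apply, while also guaranteeing that $H\subsetneq G(k)$ for a suitable choice of node (since otherwise $\mathrm{Alb}(\widetilde{C})\to G$ would have to be surjective at every choice, a strong constraint violated in the rational case).
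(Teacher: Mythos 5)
Your construction of the compatible system $\{E_n\}$ matches the paper's: glue the trivial torsor on the normalization along the node by a group element, then lift over the square-zero thickenings $C^{[n]}\hookrightarrow C^{[n+1]}$ using that the obstruction group is an $H^2$ on a curve (the paper phrases this via $\Ext^1$ against $\LDERF g_0^*L^\bullet_{BG/k}$, which is the same computation since $G$ is smooth). The Mayer--Vietoris identification of $\ker(\nu^*)$ with $G(k)/H$ is also fine.

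The genuine gap is in the non-extendability step, and it is not a technical detail you have deferred but the heart of the lemma. You claim that if $E\to Z$ restricts to $E_0$, then a patching argument using local triviality of $E$ on $\widehat{\sO}_{Z,p}$ forces the gluing class $g$ to lie in $H$, i.e.\ that the restricted class in $\ker(\nu^*)\cong G(k)/H$ must vanish. This is too strong and there is no reason to expect it: a local trivialization $t$ near $p$ in $Z$ and the global trivialization $s$ on $\widetilde{C}$ differ by a $G$-valued map on their overlap in $\widetilde{C}$, which need not be constant (anti-affine groups admit non-constant maps from affine curves, e.g.\ into a $\Gm$ or $\mathbb{G}_a$ fibre of $G\to G/G_{\aff}$), so you cannot conclude $s(p_1)s(p_2)^{-1}\in H$. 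Indeed, torsion classes in $\ker(\nu^*)$ can in principle extend to $Z$, so the strongest statement available is about torsion, not vanishing. The paper's actual mechanism, which your proof never touches, is: apply Chevalley's theorem to write $0\to H\to G\to A\to 0$ with $A$ an abelian variety of positive dimension, choose the gluing element $x$ so that its image $x_A\in A(k)$ has infinite order, and invoke the theorem (Raynaud, SGA~7, XIII~2.4 \& 2.6) that for $Z$ regular every class in $H^1(Z,A)$ is \emph{torsion}; since the induced $A$-torsor $F_0/H\to C$ is non-torsion, it cannot be the restriction of any $A$-torsor on $Z$. Replacing this torsion argument with your patching claim also breaks the generality of the statement: you need $H\subsetneq G(k)$ and only verify it when $\widetilde{C}$ is rational, whereas the paper's choice of a non-torsion $x_A$ (modulo the finitely generated subgroup coming from $\Hom(\mathrm{Jac}(\widetilde{C}),A)$) works for an arbitrary nodal curve. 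To repair your proof you would need to import exactly this passage to the abelian-variety quotient and the torsion theorem over regular bases.
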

\begin{proof}
Recall that $G$ is smooth, connected and commutative \cite[\S III.3.8]{MR0302656}. Furthermore, by
Chevalley's theorem, there is an extension $0\to H\to G \to A\to 0$, where
$A$ is an abelian variety (of positive dimension) and $H$ is affine. Let
$x_A\in A(k)$ be an element of infinite order and let $x\in G(k)$
be any lift of $x_A$.

Let $\widetilde{C}$ be the normalization of $C$. Let $F_0\to C$ be the
$G$-torsor obtained by gluing the trivial $G$-torsor on
$\widetilde{C}$ along the node by translation by $x$. Note that the induced
$A$-torsor $F_0/H\to C$ is not torsion as it is obtained by gluing along the
non-torsion element $x_A$.

We may now lift $F_0\to C$ to $G$-torsors $F_n\to C^{[n]}$. Indeed, the
obstruction to lifting $F_{n-1}$ to $F_n$ lies in $\Ext^1_{\sO_C}(\LDERF
g_0^*L^\bullet_{BG/k},I^{n}/I^{n+1})$, where $g_0\colon C\to BG$
is the morphism corresponding to $F_0\to C$ and $I$ is the ideal defining $C$
in $Z$. Since $G$ is smooth, the cotangent complex
$L^\bullet_{BG/k}$ is concentrated in degree $1$ and since $C$ is a
curve, it has cohomological dimension $1$. It follows that the obstruction
group is zero.

Now given a $G$-torsor $F\to Z$, there is an induced $A$-torsor $F/H\to
Z$. Since $Z$ is regular, the torsor $F/H\to Z$ is torsion in
$H^1(Z,A)$~\cite[XIII 2.4 \& 2.6]{MR0260758}. Thus, $F/H\to Z$ cannot restrict
to $F_0/H\to C$ and the result follows.
\end{proof}
We now have the following theorem, which is a counterexample to \cite[Thm.~1.1]{MR2194377} and \cite[Case I]{MR2258535}. 
\begin{theorem}\label{T:failure_hom-stack}
Let $X \to S$ be a quasi-separated morphism of algebraic stacks. If $k$ is an algebraically closed field and $x\colon \Spec k \to X$ is a point with non-affine stabilizer, then there exists a morphism $\Aff^1_k \to S$ and a proper and flat family of curves $Z \to \Aff^1_k$, where $Z$ is regular, such that $\underline{\Hom}_{\Aff^1_k}(Z,X\times_S \Aff^1_k)$ is not algebraic. 
\end{theorem}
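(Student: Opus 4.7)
The plan is to apply Lemma \ref{L:degeneration-of-nodal-curve} to build a compatible family of formal maps into $X$ which cannot extend to any genuine morphism on a regular scheme containing the nodal special fibre, and then to argue that algebraicity of the Hom-stack would force such an extension. I would first take the morphism $\Aff^1_k\to S$ to be the composition $\Aff^1_k\to\Spec k\xrightarrow{x} X\to S$, so that $X\times_S\Aff^1_k\cong X_k\times_k\Aff^1_k$ with $X_k:=X\times_S\Spec k$; this reduces the problem to the case $S=\Spec k$. For the family $\pi\colon Z\to\Aff^1_k$ I would take a proper flat morphism of relative dimension one with $Z$ a regular scheme whose fibre $C:=\pi^{-1}(0)$ is a nodal curve---for instance, a suitable resolution of a pencil of plane cubics containing a nodal member, restricted to an $\Aff^1_k\subset \mathbb{P}^1_k$.

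By hypothesis the stabiliser $G_x$ is not affine, so by \cite[\S III.3.8]{MR0302656} its maximal anti-affine subgroup $G:=(G_x)_{\ant}$ is non-trivial, and as in the proof of Theorem \ref{T:failure_tensoriality} the composition $BG\to BG_x\to X$ is quasi-affine. Applying Lemma \ref{L:degeneration-of-nodal-curve} to the regular scheme $Z$ and its nodal closed subscheme $C$ produces a compatible system of $G$-torsors $E_n\to C^{[n]}$, where $C^{[n]}:=Z\times_{\Aff^1_k}\Spec k[t]/(t^{n+1})$, whose associated $A$-torsor $E_0/G_{\aff}$ on $C$---with $A:=G/G_{\aff}$ the abelian variety quotient---is non-torsion in $H^1(C,A)$. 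Composing with $BG\to X$ yields compatible morphisms $f_n\colon C^{[n]}\to X$, which we regard as a compatible family of $k[t]/(t^{n+1})$-points of $Y:=\underline{\Hom}_{\Aff^1_k}(Z, X\times_S\Aff^1_k)$.

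Suppose for contradiction that $Y$ is algebraic. After replacing $X$ by a quasi-compact open substack containing $x$, we may assume that $Y$ is locally of finite presentation over $\Aff^1_k$. Artin's approximation theorem then lifts the formal family $(f_n)$ to a morphism $g\colon Z_U\to X$ over an \'etale neighbourhood $U\to\Aff^1_k$ of $0$ satisfying $g|_{C^{[n]}}=f_n$ for all $n$. Since $U\to\Aff^1_k$ is \'etale, the scheme $Z_U$ is regular, contains $C$ as its special fibre, and its infinitesimal thickenings of $C$ coincide with the $C^{[n]}$ inside $Z$.

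The technical heart of the argument, which I expect to be the main obstacle, is to upgrade the morphism $g$ to a genuine $G$-torsor on some regular open neighbourhood $V\supseteq C$ inside $Z_U$: once this is achieved, the regularity of $V$ and \cite[XIII 2.4, 2.6]{MR0260758} force the induced $A$-torsor on $V$ to be torsion in $H^1(V,A)$, contradicting the fact that its restriction $E_0/G_{\aff}$ to $C$ is non-torsion. To produce such a torsor I would exploit that $BG\to X$ is quasi-affine: the pullback $W:=Z_U\times_X BG\to Z_U$ is then a quasi-affine morphism whose sections are precisely the liftings of $g$ through $BG$. The family $(E_n)$ defines a formal section of $W\to Z_U$ along the completion of $Z_U$ at $C$, and this formal section should be realised on an open neighbourhood $V\supseteq C$ by a Grothendieck-existence argument for the quasi-affine morphism $W\to Z_U$---or equivalently by the formal gluing machinery of Section \ref{S:formal-gluings}. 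This contradiction shows that $Y$ cannot be algebraic.
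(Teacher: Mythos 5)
Your proposal assembles the right ingredients (the anti-affine subgroup $G=(G_x)_{\ant}$, the quasi-affineness of $BG\to X$, and Lemma~\ref{L:degeneration-of-nodal-curve}), but the way you derive the contradiction has two genuine gaps. First, Artin approximation is the wrong tool here and is misapplied: it takes as \emph{input} an already effectivized point over $k[[t]]$ and outputs a point over an \'etale neighbourhood agreeing with it only to a prescribed \emph{finite} order, so your claim that $g|_{C^{[n]}}=f_n$ for all $n$ does not follow. The fact you actually need --- and which is the entire content of the contradiction --- is that an algebraic stack $Y$ satisfies effectivity over the complete local ring $k[[t]]$, i.e.\ $Y(k[[t]])\to\varprojlim_n Y(k[t]/(t^{n+1}))$ is essentially surjective (lift along a smooth presentation). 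This is what the paper uses: the compatible family is a non-effectivizable element of $\varprojlim_n Y(T_n)$, and $\hat Z=Z\times_{\Aff^1_k}\Spec k[[t]]$ is already regular, so no \'etale neighbourhood of $0$ and no approximation are needed.

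Second, by composing with $BG\to X$ before invoking algebraicity, you create the lifting problem you call the ``technical heart'' and leave unresolved: effectivizing the maps $f_n\colon C^{[n]}\to X$ gives a morphism to $X$, not a $G$-torsor, and the obstruction identified by Lemma~\ref{L:degeneration-of-nodal-curve} lives precisely in the failure to lift back through $BG\to X$ compatibly with the $E_n$. Moreover, even granting some algebraization $g$ and some lift of it through $BG$ near $C$, its restriction to $C$ is a priori only \emph{some} lift of $f_0$, not $E_0$, so the non-torsion property is not automatically inherited. The paper avoids all of this by transferring algebraicity in the opposite direction first: since $BG\to X$ is quasi-affine, the induced morphism $\underline{\Hom}_T(Z,BG_T)\to\underline{\Hom}_T(Z,X\times_S T)$ is quasi-affine by \cite[Thm.~2.3(ii)]{hallj_dary_g_hilb_quot}, so algebraicity of the target would force algebraicity of $\underline{\Hom}_T(Z,BG_T)$, and the effectivity argument is then run directly for the stack of $G$-torsors, where the lemma applies verbatim to $C\subset\hat Z$. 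If you reorganize your argument along these lines --- effectivity over $k[[t]]$ in place of Artin approximation, and quasi-affine transfer of algebraicity in place of the unproven lifting step --- it becomes the paper's proof.
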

\begin{proof}
  Let $Q$ be the stabilizer group scheme of $x$ and let $G$
  be the largest anti-affine subgroup scheme of $Q$; thus, $G$ is a
  non-trivial anti-affine group scheme over $k$ and the quotient group
  scheme $Q/G$ is affine \cite[\S III.3.8]{MR0302656}.

  Let $Z$ be a proper family of curves over $T=\Aff^1_k=\Spec k[t]$ with
  regular total space and a nodal curve $C$ as the fiber over the origin; for example, take $Z=\Proj_{T}(k[t][x,y,z]/(y^2z-x^2z-x^3-tz^3))$ over $T$. Let $T_n=V(t^{n+1})$, $\hat{T} = \Spec \hat{\sO}_{T,0}$, $Z_n=Z\times_T T_n$, and $\hat{Z} = Z\times_T \hat{T}$. We now apply Lemma~\ref{L:degeneration-of-nodal-curve} to $C$ in $\hat{Z}$ and $G$. Since $Z_n = C^{[n]}$, this produces an element in
\[
\varprojlim_n \underline{\Hom}_T(Z,BG_T)(T_n)=\varprojlim_n \Hom(Z_n,BG)
\]
that does not lift to
\[
\underline{\Hom}_T(Z,BG_T)(\hat{T})=\Hom(\hat{Z},BG).
\]
This shows that $\underline{\Hom}_T(Z,BG_T)$ is not algebraic.

By \cite[Thm.~B.2]{MR2774654}, the morphism $x$ factors as $\Spec k
\to BQ\to \mathcal{Q} \to X$, where $\mathcal{Q}$ is the residual gerbe,
$\mathcal{Q} \to X$ is quasi-affine and $BQ \to \mathcal{Q}$ is affine.
Since $Q/G$ is affine, it follows that
the induced morphism $BG \to BQ\to X$ is quasi-affine. By
\cite[Thm.~2.3(ii)]{hallj_dary_g_hilb_quot}, the induced morphism
$\underline{\Hom}_T(Z,BG_T) \to \underline{\Hom}_T(Z,X\times_S T)$ is
quasi-affine. In particular, if $\underline{\Hom}_T(Z,X\times_S T)$ is
algebraic, then $\underline{\Hom}_T(Z,BG_T)$ is algebraic,
which is a contradiction. The result follows.
\end{proof}
The following theorem extends \cite[Ex.~4.12]{2014arXiv1404.7483B}.
\begin{theorem}\label{T:failure_completions}
Let $X$ be an algebraic stack with quasi-compact diagonal. If $X$ does not have
affine stabilizers, then there exists a noetherian two-dimensional regular ring
$A$, complete with respect to an ideal $I$, such that $X(A)\to \varprojlim
X(A/I^n)$ is not an equivalence of categories.
\end{theorem}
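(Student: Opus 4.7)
My approach follows the reduction scheme of Theorem~\ref{T:failure_hom-stack}: first reduce to the case $X = BG$ for a nontrivial anti-affine algebraic group $G$, and then produce $A$ and $I$ by completing an affine surface along a nodal curve, invoking Lemma~\ref{L:degeneration-of-nodal-curve}.

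For the reduction, choose an algebraically closed field $k$ (of characteristic zero, so that abelian varieties over $k$ admit non-torsion points) and a $k$-point $\Spec k \to X$ with non-affine stabilizer $Q$. Let $G = Q_{\ant}$ be the largest anti-affine subgroup of $Q$. Then $G$ is nontrivial, central in $Q$, and $Q/G$ is affine (\cite[\S III.3.8]{MR0302656}). As in the proof of Theorem~\ref{T:failure_hom-stack}, the morphism $BG \to BQ \to \mathcal{Q} \to X$ is quasi-affine. Given a compatible system $(E_n) \in \varprojlim BG(A/I^n)$ that does not lift to $BG(A)$, the induced system in $X(A/I^n)$ cannot lift either: a lift $\Spec A \to X$ would, via the quasi-affine base change $W := BG \times_X \Spec A \to \Spec A$ and Corollary~\ref{C:application-completions}(ii) applied to the quasi-affine scheme $W$ (which has affine diagonal), yield a lift of $(E_n)$ to $BG(A)$, a contradiction. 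It thus suffices to produce the counterexample for $X = BG$.

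For the construction, take $R = k[x,y]$, $f = y^2 - x^2 - x^3$, and $A = R^{\wedge}_{(f)}$, the $(f)$-adic completion. Then $A$ is noetherian of Krull dimension two, complete with respect to $I := fA$, and regular: the only potentially delicate prime of $A$ lies over the node of $V(f)$, where the completed local ring is formally isomorphic to $k[[u,v]]$ via a change of coordinates factoring $f$ as $uv$ times a unit. By construction $\Spec A/I \cong R/(f) =: C$ is the affine nodal cubic over $k$, and $\Spec A/I^{n+1}$ equals the thickening $C^{[n]}$. Apply Lemma~\ref{L:degeneration-of-nodal-curve} to $Z = \Spec A$, $C$, and $G$: this produces compatible $G$-torsors $E_n \to C^{[n]}$ with no extension to a $G$-torsor on $Z$. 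Therefore $BG(A) \to \varprojlim BG(A/I^n)$ fails essential surjectivity, and the reduction above transports the failure to $X$.

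The main subtlety is showing that the gluing class $[F_0/H] \in H^1(C, G_{\ab})$ used in Lemma~\ref{L:degeneration-of-nodal-curve} is non-torsion for our affine (non-proper) $C$. For this one checks: a trivialization of $[F_0/H]^{\otimes n}$ would restrict to a section of the trivial $G_{\ab}$-torsor on the normalization $\widetilde{C} \cong \Aff^1_k$, which must be constant since morphisms from $\Aff^1$ to an abelian variety are constant; compatibility with the gluing data at the node then forces $n x_{\ab} = 0$ in $G_{\ab}(k)$. Since $x_{\ab}$ is chosen of infinite order, $[F_0/H]$ is non-torsion, while $H^1(Z, G_{\ab})$ is torsion by regularity of $Z$ and \cite[XIII 2.4 \& 2.6]{MR0260758}. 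This provides the required obstruction to extending any hypothetical $G$-torsor on $Z$ restricting to $F_0$.
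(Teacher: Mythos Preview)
Your proof follows the paper's approach exactly: reduce to $X=BG$ for a nontrivial anti-affine group $G$ via the quasi-affine morphism $BG\to X$, then complete $k[x,y]$ along the ideal of a nodal cubic and invoke Lemma~\ref{L:degeneration-of-nodal-curve}. The extra details you supply (regularity of $A$, the explicit reduction, and the non-torsion verification for the affine nodal curve) are all correct elaborations of what the paper leaves implicit.

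Two small points to tighten. First, you cannot in general choose $k$ of characteristic zero: the only non-affine stabilizer of $X$ may live over a point of positive characteristic. What you actually need is that the abelian variety quotient $A=G/H$ has a non-torsion $k$-point, and this can always be arranged by enlarging $k$ to an algebraically closed field of positive transcendence degree over its prime field (so $A(k)$ is uncountable while $A(k)_{\mathrm{tors}}$ is countable). Second, in the reduction you invoke Corollary~\ref{C:application-completions}\ref{ci:completions:qg} for $W=BG\times_X\Spec A$, but that corollary requires $W$ noetherian, and $BG\to X$ need not be of finite type (the residual gerbe step can involve an infinite field extension). This is harmless: $W$ is a quasi-affine \emph{scheme}, and for any quasi-affine scheme over an $I$-adically complete noetherian ring the algebraization of formal sections is elementary (lift to the affine hull using completeness, then use that $I$ lies in the Jacobson radical to see the section lands in the open). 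Finally, your ``$G_{\ab}$'' should be the abelian variety quotient $A=G/H$; since anti-affine groups are already commutative, $G_{\ab}=G$.
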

\begin{proof}
Let $x\in |X|$ be a point with non-affine stabilizer group. Arguing as in the proof of Theorem \ref{T:failure_hom-stack}, there exists an algebraically closed
field $k$, an anti-affine group scheme $G/k$ of finite type and a quasi-affine morphism $BG\to X$. An easy calculation shows that it is enough to prove the theorem for $X=BG$.

Let $A_0=k[x,y]$ and let $A$ be the completion of $A_0$ along the ideal
$I=(y^2-x^3-x^2)$. Then $Z=\Spec A$ and $C=\Spec A/I$ satisfies the conditions
of Lemma~\ref{L:degeneration-of-nodal-curve} and we obtain an element in
$\varprojlim_n X(A/I^n)$ that does not lift to $X(A)$.
\end{proof}

\appendix

\section{Monomorphisms and stratifications}
In this appendix, we introduce some notions and results needed for the
faithfulness part of Theorem~\ref{T:main_tannaka_non-noeth} when $T$ is not
noetherian. This is essential for the proof of Corollary \ref{C:application-fpqc-stack}. 

\begin{definition}\label{D:separating-filtration}
We say that a finitely presented filtration $(X_k)$ of $X$ (Definition~\ref{D:fp-filtration}) is \emph{separating}
if the family $\{j_k^n\colon Y_k^{[n]}\to X\}_{k,n}$ is
separating~\cite[IV.11.9.1]{EGA}, that is, if the intersection
$\bigcap_{k,n}\ker\bigl(\sO_X\to (j_k^n)_*\sO_{Y_k^{[n]}}\bigr)$ is zero as a
lisse-\'etale sheaf.
\end{definition}

\begin{lemma}\label{L:separating}
Every finitely presented filtration $(X_k)$ on $X$ is separating if either
\begin{enumerate}
\item $X$ is noetherian; or
\item $X$ has no embedded (weakly) associated point.
\end{enumerate}
If $X$ is noetherian with a filtration $(X_k)$ and $X'\to X$ is flat, then
$(X_k\times_X X')$ is a separating filtration on $X'$.
\end{lemma}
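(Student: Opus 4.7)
The question is local in the smooth topology on $X$, so we may assume $X=\Spec A$ is affine. Let $I_k\subseteq A$ be the finitely generated ideal defining $X_k\hookrightarrow X$; then $I_0=A$, $I_r\subseteq \sqrt{0}$ (since $|X_r|=|X|$), and $I_k^{n+1}\subseteq I_k\subseteq I_{k-1}$, so $Y_k^{[n]}=V(I_k^{n+1})\setminus V(I_{k-1})$ is a well-defined open subscheme of $X_k^{[n]}$. Unwinding the definition of ``separating'', an element $s\in A$ lies in the kernel $K=\bigcap_{k,n}\ker\bigl(A\to \Gamma(Y_k^{[n]},\sO)\bigr)$ precisely when, for every $k$ and $n$, the image of $s$ in $A/I_k^{n+1}$ is supported on $V(I_{k-1})$; since $I_{k-1}$ is finitely generated, this says there exists $m=m(k,n)$ with $I_{k-1}^m\, s\subseteq I_k^{n+1}$.

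Suppose for contradiction $s\in K$ is nonzero, and let $\mathfrak p$ be a minimal prime over $\operatorname{ann}(s)\subsetneq A$. Then $s_\mathfrak p\neq 0$ in $A_\mathfrak p$, and $\mathfrak p$ is by definition a weakly associated prime of $A$. The strata cover $|X|$ set-theoretically, so $\mathfrak p\in |Y_k|$ for a unique $k$; equivalently $I_k\subseteq \mathfrak p$ and $I_{k-1}\not\subseteq\mathfrak p$. Localizing $I_{k-1}^m s\subseteq I_k^{n+1}$ at $\mathfrak p$ collapses the left-hand side to $sA_\mathfrak p$, because $I_{k-1}$ meets $A\setminus\mathfrak p$; thus $s_\mathfrak p\in \bigcap_n I_k^{n+1}A_\mathfrak p$.

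It remains to show that $\bigcap_n I_k^{n+1}A_\mathfrak p=0$ in both cases, contradicting $s_\mathfrak p\neq 0$. In case (i), $A_\mathfrak p$ is noetherian local and $I_kA_\mathfrak p\subseteq \mathfrak pA_\mathfrak p$, so Krull's intersection theorem applies. In case (ii), the hypothesis forces $\mathfrak p$ to be a \emph{minimal} prime of $A$; hence $\mathfrak pA_\mathfrak p$ coincides with the nilradical of $A_\mathfrak p$, and because $I_k$ is finitely generated and contained in $\mathfrak p$, the ideal $I_kA_\mathfrak p$ is generated by nilpotents and therefore nilpotent, giving $(I_kA_\mathfrak p)^{n+1}=0$ for $n\gg 0$. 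This is where finite generation of $I_k$ is essential---it removes the need for a noetherian hypothesis on $A_\mathfrak p$, which we do not have in case (ii).

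For the flat-pullback statement, let $X$ be noetherian with filtration $(X_k)$ and $f\colon X'\to X$ flat. The base change $X'_k:=X_k\times_X X'$ is cut out by $I_k\sO_{X'}$ and $(I_k\sO_{X'})^{n+1}=I_k^{n+1}\sO_{X'}$, so the base-changed filtration has $X_k^{[n]}\times_X X'$ and $Y_k^{[n]}\times_X X'$ as infinitesimal neighborhoods and strata. Each $j_k^n$ is quasi-compact and quasi-separated because $X_k^{[n]}$ is noetherian, so flat base change applies to $(j_k^n)_*\sO_{Y_k^{[n]}}$; applying the exact functor $f^*$ to the injection $\sO_X\hookrightarrow \prod_{k,n}(j_k^n)_*\sO_{Y_k^{[n]}}$ furnished by case (i) yields the desired injection on $X'$. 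The main delicate point throughout is case (ii): replacing Krull's intersection theorem at $A_\mathfrak p$ by an argument that survives without noetherianness, which is exactly what the finite presentation of the filtration makes available.
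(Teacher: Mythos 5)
Your proof of the first claim is correct and takes a more self-contained route than the paper's. You unwind the separating condition explicitly (membership in the kernel means $I_{k-1}^m s\subseteq I_k^{n+1}$ for suitable $m$), localize at a weakly associated prime minimal over $\operatorname{ann}(s)$, and finish with Krull's intersection theorem in case (i) and nilpotence of the finitely generated ideal $I_kA_\mathfrak p$ in case (ii). The paper instead invokes the existence of a \emph{separating family}: in the noetherian case the finitely many artinian local rings coming from primary decomposition, in general the localizations at the weakly associated points; one then observes that each member of that family factors through a single $Y_k^{[n]}$. Both arguments hinge on the same idea (test at weakly associated points, where the stratification trivializes), and yours is arguably more elementary. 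A minor point: the separating condition is the vanishing of a sheaf, not just of its global sections, so you should note that your computation applies verbatim to every localization $A_g$; this is harmless since the hypotheses are stable under localization.

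The flat-pullback claim, however, has a genuine gap. What you must prove is that $\sO_{X'}\to \prod_{k,n} f^*\bigl((j_k^n)_*\sO_{Y_k^{[n]}}\bigr)$ is injective, where the product runs over an \emph{infinite} index set. Applying the exact functor $f^*$ to the injection $\sO_X\hookrightarrow \prod_{k,n}(j_k^n)_*\sO_{Y_k^{[n]}}$ only gives injectivity of $\sO_{X'}\to f^*\bigl(\prod_{k,n}(j_k^n)_*\sO_{Y_k^{[n]}}\bigr)$, and the comparison map $f^*\prod\to\prod f^*$ is in general neither an isomorphism nor injective for a flat morphism (flat modules need not be Mittag--Leffler), so the desired injection does not follow. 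The correct repair --- and this is exactly how the paper argues --- is to observe that in the noetherian case a \emph{finite} subfamily of the $Y_k^{[n]}$ already separates: each artinian local ring $(A/\mathfrak q_i)_{\mathfrak p_i}$ from a primary decomposition factors through a single $Y_{k_i}^{[n_i]}$, so $\bigcap_{i}\ker\bigl(\sO_X\to (j_{k_i}^{n_i})_*\sO\bigr)=0$. Since flat pullback preserves kernels and finite intersections (and commutes with pushforward along the quasi-compact, quasi-separated $j_{k_i}^{n_i}$), the conclusion on $X'$ follows. Note that your case (i) argument does not supply this finiteness: Krull's intersection theorem genuinely uses all $n$ at once, whereas the primary-decomposition argument bounds $n$ for each associated prime. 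You would need to add this finiteness step (e.g., by showing $I_{k_i}^{N_i}\subseteq\mathfrak q_i$ for some $N_i$ and deducing $K_{k_i,N_i-1}\subseteq\mathfrak q_i$) before the flat base change can be carried out.
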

\begin{proof}
As the question is smooth-local, we can assume that $X$ and $X'$ are affine
schemes. If $X$ is noetherian, then by primary decomposition there exists a
separating family $\coprod_{i=1}^m \Spec A_i\to X$ where the $A_i$ are
artinian. As every $\Spec A_i$ factors through some $Y_k^{[n]}$, it
follows that $(X_k)$ is separating. In general, $\{\Spec \sO_{X,x}\to
X\}_{x\in \Ass(X)}$ is separating~\cite[1.2, 1.5, 1.6]{MR0168625}. If $x$
is a non-embedded associated point,
then $\Spec \sO_{X,x}$ is a one-point scheme and factors through some
$Y_k^{[n]}$ and the first claim follows.

For the last claim, we note that a finite number of the infinitesimal
neighborhoods of the strata suffices in the noetherian case and that
flat morphisms preserve kernels and finite intersections.
\end{proof}

\begin{proposition}\label{P:mono-with-stratification-is-iso}
Let $X$ be an algebraic stack with a finitely presented filtration $(X_k)$.
Let $f\colon Z\to X$ be a morphism locally of finite type. If
$f|_{Y_k^{[n]}}$ is an isomorphism for every $k$ and $n$, then $f$ is a
surjective closed immersion. If in addition $(X_k)$ is separating, then $f$ is
an isomorphism.
\end{proposition}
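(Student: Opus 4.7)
The plan is to prove the surjective closed immersion assertion first, then deduce the isomorphism statement under the separating assumption. I would begin by reducing to the case where $X$ is an affine scheme, which is legitimate since both properties are smooth-local on $X$, and in parallel reduce to $f$ representable.

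First I would establish that $f$ is universally injective, surjective on underlying spaces, and representable. Given any field $K$ and morphism $x\colon \Spec K \to X$, the image lies in exactly one stratum $|Y_k|$ by the stratification of $|X|$. Because $K$ is a field, $x$ factors through the locally closed immersion $Y_k = Y_k^{[0]} \hookrightarrow Y_k^{[n]} \hookrightarrow X$ for every $n$, and the hypothesis that $f|_{Y_k^{[n]}}$ is an isomorphism forces the fiber to satisfy $Z\times_X \Spec K \cong \Spec K$. This simultaneously gives surjectivity on points, universal injectivity, and bijectivity on stabilizer groups, so $f$ is representable.

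Next, I would induct on the number of strata $r$. The case $r = 0$ (so $X = \emptyset$) is vacuous. For $r \geq 1$, set $V := X \setminus X_1$; the ambient filtration restricts to a filtration of length $r - 1$ on $V$ with strata $Y_k$ for $k \geq 2$ and the same infinitesimal neighborhoods $Y_k^{[n]}$ (since $X_1 \subseteq X_{k-1}$ for $k\geq 2$). The induction hypothesis then gives that $f|_V$ is a surjective closed immersion. Separately, since $Y_1^{[n]} = X_1^{[n]}$, the hypothesis says $f|_{X_1^{[n]}}$ is an isomorphism for every $n$. The main obstacle is to combine these two pieces of data---a closed immersion on the open $V$ and isomorphisms on every infinitesimal neighborhood of the complementary closed $X_1$---into a global conclusion that $f$ is a closed immersion. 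I would approach this by showing that $f$ is unramified (the quasi-coherent sheaf $\Omega_{Z/X}$ vanishes on $V$ by the closed-immersion case, and on each $f^{-1}(X_1^{[n]}) \cong X_1^{[n]}$ since $f$ restricts to an iso there, which forces it to vanish everywhere on the finitely presented support) and universally closed (by the valuative criterion, splitting each trait $\Spec R \to X$ according to which strata its generic and special points lie in and lifting through the appropriate $Y_k^{[n]}$); together with universal injectivity and being of finite type, these give that $f$ is a closed immersion.

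Finally, for the separating case, once $f$ is known to be a surjective closed immersion it corresponds to a quasi-coherent ideal $J \subseteq \sO_X$ with $f_*\sO_Z = \sO_X/J$. The condition that $f|_{Y_k^{[n]}}$ is an isomorphism translates into $J \subseteq \ker\bigl(\sO_X \to (j_k^n)_*\sO_{Y_k^{[n]}}\bigr)$ for every $k$ and $n$; under the separating assumption this intersection is zero, so $J = 0$ and $f$ is an isomorphism.
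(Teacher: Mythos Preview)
Your reduction to $X$ affine, your argument that $f$ is surjective, universally injective and representable, and your final paragraph on the separating case are all fine. The unramifiedness step can also be made precise: locally, $\Omega_{B/A}$ is a finitely generated $B$-module with $I\Omega_{B/A}=\Omega_{B/A}$ (from $f|_{X_1}$ an isomorphism), so by Nakayama it is killed by some $1-a$ with $a\in IB$; combined with the support lying in $V(IB)$ (from $f|_V$ a closed immersion) this forces $\Omega_{B/A}=0$.

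The genuine gap is in your universal closedness step. Your plan to ``split each trait according to which strata its generic and special points lie in and lift through the appropriate $Y_k^{[n]}$'' does not work when the special point lands in $|X_1|$ and the generic point lands in $|V|$: in that case the trait $\Spec R\to X$ does not factor through any $Y_k^{[n]}$. For instance, with $X=\mathbb{A}^1$, $X_1=\{0\}$, and $\Spec k[[t]]\to \mathbb{A}^1$ the obvious map, no power of the ideal $(x)$ pulls back to zero, so there is no factorisation through $X_1^{[n]}$; nor does the trait land in $V=\mathbb{G}_m$. One might hope to glue the lift over the generic point with the compatible lifts over $\Spec R/\mathfrak m^{n+1}$, but this is a formal gluing statement for the as-yet-uncontrolled space $Z$ and is not automatic.

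The paper handles exactly this difficulty by a different mechanism: having reduced to checking that $f$ is a closed immersion near a point $x\in|X_1|$, it uses that $f$ is quasi-finite and separated (being a monomorphism) to invoke the \'etale-local structure of quasi-finite morphisms, splitting $Z$ \'etale-locally as $Z_0\amalg Z_1$ with $Z_0\to X$ a finite monomorphism (hence a closed immersion) containing the fibre over $x$. It then analyses the three closed substacks $X_1$, $\overline{f(Z_0)\cap U}$, $\overline{f(Z_1)\cap U}$ separately; the delicate case, where $f(Z_0)$ is set-theoretically contained in $X_1$, is dispatched by showing that the relevant component $W^{[n]}\hookrightarrow X$ stabilises and becomes an open immersion. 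This \'etale-local ZMT decomposition is the key input you are missing.
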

\begin{proof}
Note that $f$ is a surjective and quasi-compact monomorphism.
We will prove that $f$ is
a closed immersion by induction on the number of strata~$r$. If $r=0$, then
$X=\emptyset$ and there is nothing to prove. If $r=1$, then
$X=X_1^{[n]}=Y_1^{[n]}$ for sufficiently large $n$ and the result follows. If
$r\geq 2$, then let $U=X\setminus X_1$. By the induction hypothesis, $f|_U$ is
a surjective closed immersion.

It is enough to show that $f$ is a closed immersion in a neighborhood of every
$x\in |X_1|$. This can be checked locally in the \'etale topology and we may
thus assume that $Z=Z_0\amalg Z_1$ where $Z_0\to X$ is a closed immersion and
$Z_1\cap f^{-1}(x)=\emptyset$. Note that $f(Z_0)\cap U$ and $f(Z_1)\cap U$
are disjoint closed and open subsets.

It is further enough to show that $f$ is a closed immersion after replacing $X$
with either $X_1$, $\overline{f(Z_0)\cap U}$ or $\overline{f(Z_1)\cap U}$.
In the first and second
case, $f$ is certainly a closed immersion. In the third case, $f(Z_0)$ is
set-theoretically contained in $X_1$. Let $W=f(Z_0)\cap X_1$; this is an
open and closed substack of $X_1$. Thus, $f|_{Z_0}\colon Z_0\to X$ factors
through $W^{[n]}$ for sufficiently large $n$. By hypothesis, this means that
$Z_0\cong W^{[n]}\cong W^{[N]}$ for all sufficiently large $n$ and all $N\geq
n$. This implies that $W^{[n]}\inj X$ is an open immersion and we have
proved that $f$ is a closed immersion in a neighborhood of $x$. The result
follows.

The last claim is obvious.
\end{proof}

The following example illustrates that a closed immersion $f\colon Z\to X$ as in
Proposition~\ref{P:mono-with-stratification-is-iso} need not be an isomorphism
even if $f$ is of finite presentation.
\begin{example}
Let $A = k[x,z_1,z_2,\dots]/(xz_1,\{z_k-xz_{k+1}\}_{k\geq 1},\{z_iz_j\}_{i,j\geq 1})$ and $B = A/(z_1)$.
Then $A/(x^n) = k[x]/(x^n) = B/(x^n)$ and $A_x = k[x]_x = B_x$ but the surjection
$A\to B$ is not an isomorphism.
\end{example}

\newcommand{\norm}{\mathrm{norm}}
\newcommand{\cms}{\mathrm{cms}}
\section{A relative boundedness result for Hom~stacks}\label{S:boundedness}
Here we prove the following relative boundedness result for Hom stacks.

\begin{theorem}\label{T:rel-boundedness-affine-fibers}
Let $f\colon Z\to S$ be a proper, flat and finitely presented morphism of
algebraic stacks. Let $X$ and $Y$ be algebraic stacks that are locally of
finite presentation and quasi-separated over $S$ and have affine
stabilizers over $S$. Let $g\colon X\to Y$ be a finitely presented $S$-morphism.
If $g$ has affine fibers, then
\[
\underline{\Hom}_S(Z,g) \colon \underline{\Hom}_S(Z,X) \to \underline{\Hom}_S(Z,Y)
\]
is of finite presentation. If in addition $g\colon X \to Y$ is a group, then $\underline{\Hom}_S(Z,g)$ is a group with affine fibers. 
\end{theorem}
Theorem~\ref{T:rel-boundedness-affine-fibers} is used in Theorems \ref{T:main_alg_hom} and \ref{T:main_alg_weilr} to establish the
quasi-compactness of the diagonal of Hom-stacks and Weil
restrictions. 

Without using Theorem~\ref{T:rel-boundedness-affine-fibers},
the proof of Theorem \ref{T:main_alg_hom} give the algebraicity of the Hom-stacks and that they have quasi-separated diagonals. In the setting of 
Theorem~\ref{T:rel-boundedness-affine-fibers}, we may conclude that $\underline{\Hom}_S(Z,g)$ is a quasi-separated morphism of algebraic stacks that are locally of finite presentation over $S$. It remains to prove that the morphism $\underline{\Hom}_S(Z,g)$ is quasi-compact.

\subsection*{Preliminary reductions}
If $W$ and $T$ are algebraic stacks over $S$, let $W_T = W\times_S T$; similarly for morphisms between stacks over $S$. We will use this notation throughout this appendix. 

As the question is local on $S$, we may assume that $S$ is an affine scheme.
We may also assume that $X$ and $Y$ are of finite presentation over $S$ since
it is enough to prove the theorem after replacing $Y$ with an open
quasi-compact substack and $X$ with its inverse. By
standard approximation results, we may then assume that $S$ is of finite type over
$\Spec \Z$. For the remainder of this article, all stacks will be of finite
presentation over $S$ and hence excellent with finite normalization.

By noetherian induction on $S$, to prove that $\underline{\Hom}_S(Z,g)$ is
quasi-compact, we may assume that $S$ is integral and replace $S$ with a
suitable dense open subscheme. Moreover, we may also replace $Z\to S$ with the
pull-back along a dominant map $S'\to S$. Recall that there exists a field
extension $K'/K(S)$ such that $(Z_{K'})_\red$ (resp.\ $(Z_{K'})_\norm$) is
geometrically reduced (resp.\ geometrically normal) over $K'$. After replacing
$S$ with a dense open subset of the normalization in $K'$, we may thus assume
that
\begin{enumerate}
\item $Z_\red\to S$ is flat with geometrically reduced
  fibers; and
\item $Z_\norm\to S$ is flat with geometrically normal
  fibers;
\end{enumerate}
since these properties are
constructible~\cite[IV.9.7.7~(iii) and 9.9.4~(iii)]{EGA}.

We now prove three reduction results. Throughout, we will assume the following:
\begin{itemize}
\item $Z$ is proper and flat over $S$,
\item $X$ and $Y$ are finitely presented algebraic stacks over $S$ with affine stabilizers, and
\item $g \colon X \to Y$ is a representable morphism over $S$.
\end{itemize}
Our first reduction result is similar to~\cite[Lem.~5.11]{MR2239345}.
\begin{lemma}\label{L:QC-descent:nil}
If $\underline{\Hom}_{S'}(Z',g_{S'})$ is quasi-compact
for every scheme $S'$, morphism $S'\to S$ and nil-immersion $Z'\to Z_{S'}$ such that
$Z'\to S'$ is proper and flat with geometrically reduced fibers, then
$\underline{\Hom}_S(Z,g)$ is quasi-compact.
\end{lemma}
\begin{proof}
Assume that the condition holds. To prove that $\underline{\Hom}_S(Z,g)$ is quasi-compact,
we may assume that $S$ is integral. We may also assume that $Z_\red\to S$ has
geometrically reduced fibers. Pick a sequence of square-zero nil-immersions
$Z_\red=Z_0\inj Z_1\inj \dots \inj Z_n=Z$. After replacing $S$ with a dense
open subset, we may assume that all the $Z_i\to S$ are flat. Thus, it suffices to show that
if $j \colon Z_0 \to Z$ is a square-zero closed immersion where $Z_0$ is flat over $S$ and $\underline{\Hom}_S(Z_0,g)$ is quasi-compact, then $\underline{\Hom}_S(Z,g)$ is quasi-compact. Now argue as in \cite[Lem.~5.11]{MR2239345}, but this time using the deformation theory of \cite[Thm.~1.5]{MR2206635} and the Semicontinuity Theorem of \cite[Thm.~A]{hallj_coho_bc}.
\end{proof}

Before we proceed, we make the following observation: fix an $S$-scheme $T$ and an $S$-morphism $y\colon Z_T \to Y$. This corresponds to a map
$T\to \underline{\Hom}_S(Z,Y)$.
The pullback of $\underline{\Hom}_S(Z,g)$ along this map is isomorphic to the Weil 
restriction $\weilr_{Z_T/T}(X\times_{g,Y,y} Z_T)$, which we will denote as 
$H_{Z/S,g}(y)$. Note that our hypotheses guarantee that $H_{Z/S,g}(y)$ is locally of finite type 
and quasi-separated over $T$.

The second reduction is for a (partial) normalization.

\begin{lemma}\label{L:QC-descent:normalization}
If $\underline{\Hom}_{S'}(Z',g_{S'})$ is quasi-compact for every scheme $S'$, morphism
$S'\to S$ and finite morphism $Z'\to Z_{S'}$ such that $Z'\to S'$ is proper and flat
with geometrically normal fibers, then $\underline{\Hom}_S(Z,g)$ is quasi-compact.
\end{lemma}
\begin{proof}
By Lemma~\ref{L:QC-descent:nil}, we may assume that $Z\to S$ is flat
with geometrically reduced fibers.  We will use induction on the maximal fiber
dimension~$d$ of $Z\to S$. After modifying $S$, we may assume that
$W:=Z_\norm\to S$ is flat with geometrically normal fibers. Let $Z_0\inj Z$ and
$W_0\inj W$ be the closed substacks given by the conductor ideal of $W\to
Z$.

After replacing $S$ with a dense open subset, we may assume that $Z_0\to S$ and
$W_0\to S$ are flat and that $W\to Z$ is an isomorphism over an open subset
$U\subseteq Z$ that is dense in every fiber. In particular, since $Z_0\cap
U=\emptyset$, the dimensions of the fibers of $Z_0\to S$ are strictly smaller
than $d$. Thus, by induction we may assume that $\underline{\Hom}_S(Z_0,g)$ is
quasi-compact. But 
\[
\xymatrix{%
W_0\ar@{^(->}[r]^i\ar[d]_{h_0} & W\ar[d]^h\\
Z_0\ar@{^(->}[r]^j & Z\ar@{}[ul]|\square
}%
\]
is a bicartesian square and remains so after arbitrary base change over $S$
since $W_0\to S$ is flat. Indeed, that it is cartesian is 
\cite[Lem.~A.3(i)]{hallj_openness_coh}. That it is cocartesian and the commutes with arbitrary base change 
over $S$ follows from the 
arguments of \cite[Lem.~A.4, A.8]{hallj_openness_coh} and the existence of pinchings of algebraic spaces 
\cite[Thm.~38]{kollar-2008}. 

It remains to prove that $H_{Z/S,g}(y) \to T$ is quasi-compact, where $T$ is an integral scheme of finite type over $S$ and $y\colon Z_T \to Y$ is a morphism. The 
bicartesian square above implies that 
\[
H_{Z/S,g}(y) \simeq H_{Z_0/S,g}(y j) \times_{H_{W_0/S,g}(y  h  i)} H_{W/S,g}(y h).
\]
The result follows, since $H_{Z_0/S,g}(y j)$ and $H_{W/S,g}(y h)$ are quasi-compact and 
$H_{W_0/S,g}(y h i)$ is quasi-separated.

\end{proof}
We have the following variant of $h$-descent~\cite[Thm.~7.4]{MR2679038}.
\begin{lemma}\label{L:h-descent}
Let $S$ be an algebraic stack, let $T$ be an algebraic $S$-stack and
let $g\colon T'\to T$ be a universally
subtrusive (e.g., proper and surjective)
morphism of finite presentation such that $g$ is flat over an open substack
$U\subseteq T$. If $T$ is weakly normal in $U$ (e.g., $T$ normal and $U$ open
dense), then for every representable morphism $X\to S$, the following sequence
of sets is exact:
\[
\xymatrix{
X(T)\ar[r] & X(T')\ar@<.5ex>@{->}[r]\ar@<-.5ex>@{->}[r] & X(T'\times_T T')
}
\]
where $X(T)=\Hom_S(T,X)$ etc.
\end{lemma}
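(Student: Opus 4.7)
Setting $Y = X \times_S T \to T$, the claim is equivalent to showing that any $T$-morphism $\xi \colon T' \to Y$ satisfying $\xi \circ p_1 = \xi \circ p_2$ on $T'' := T' \times_T T'$ descends uniquely to a section $T \to Y$. Since $X \to S$ is representable, $Y \to T$ is representable and $\Delta_{Y/T}$ is a locally closed monomorphism. The plan is to treat uniqueness and existence separately; in both cases one first descends over the flat locus $U$, and then uses the weak normality assumption to extend across $T \setminus U$.

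For uniqueness, two sections $s_1, s_2 \colon T \to Y$ agreeing after pullback to $T'$ coincide on the equalizer $E := T \times_{(s_1, s_2), Y \times_T Y, \Delta} Y \hookrightarrow T$, a monomorphism locally of finite presentation. The map $T' \to T$ factors through $E$, so $E \to T$ is surjective. Over $U$, the factorization $g^{-1}(U) \to E|_U \to U$ of an fppf cover through a monomorphism forces $E|_U \to U$ to be fppf and hence an open immersion; being surjective, it is an isomorphism. In particular $U \subseteq E$, so $E \to T$ is birational. A surjective monomorphism locally of finite presentation is a universal homeomorphism and unramified; applying Zariski's Main Theorem fibrewise, one reduces to the case where $E \to T$ is finite (the locus where it fails to be finite is disjoint from both $U$ and the image of $T'$, hence empty since $g$ is universally subtrusive). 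Weak normality of $T$ in $U$ now forces $E \to T$ to be an isomorphism, so $s_1 = s_2$.

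For existence, over $U$ the restriction $g|_{g^{-1}(U)}$ is faithfully flat and of finite presentation. By fppf descent for the representable morphism $Y \to T$, the restriction of $\xi$ descends to a section $\eta \colon U \to Y|_U$. Let $W \subseteq Y$ be the scheme-theoretic closure of the image of $\eta$; then $W|_U \to U$ is an isomorphism, and $\xi$ factors through $W$ because it does so over the scheme-theoretically dense open $g^{-1}(U) \subseteq T'$ (here we use that $g$ is flat over $U$ to ensure $g^{-1}(U)$ is schematically dense in $T'$, at least after reducing to the reduced structure as permitted by the weak normality hypothesis). By the descent condition, $W \to T$ is a monomorphism: a point of $W$ over $t \in T$ lifts through $\xi$ to some $t' \in T'$ above $t$, and the descent equality for any pair $(t', t'')$ in $T' \times_T T'$ shows such a lift is unique in $W$. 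Thus $W \to T$ is a surjective (since it contains the image of $T'$) monomorphism which is an isomorphism over the dense open $U$. Arguing exactly as in the uniqueness step — using finite presentation of $g$ and universal subtrusiveness to reduce to a finite morphism, then applying weak normality — one concludes $W \to T$ is an isomorphism. Its inverse, composed with $W \hookrightarrow Y \to X$, produces the desired $S$-morphism $T \to X$ restricting to $\xi$.

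The main obstacle is the passage from a surjective monomorphism $E \to T$ (or $W \to T$) to a finite morphism, so that the definition of weak normality applies: one must verify that any point of $T$ outside the finite locus would contradict either the universal subtrusiveness of $g$ or the density of $U$. A secondary technical point is the schematic density of $g^{-1}(U)$ in $T'$ needed to guarantee that $\xi$ lands in $W$; this is where the flatness of $g$ over $U$ enters decisively, together with a reduction to the reduced/weakly normal structure on $T$.
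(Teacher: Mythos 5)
The existence half of your argument has a genuine gap at the step ``$\xi$ factors through $W$ because it does so over the scheme-theoretically dense open $g^{-1}(U)\subseteq T'$''. Nothing in the hypotheses makes $g^{-1}(U)$ dense, even topologically, in $T'$: for instance $T'$ may be the disjoint union of a finite flat cover of $T$ and an arbitrary finitely presented closed subscheme supported in $T\setminus U$; such a $g$ is still universally subtrusive, of finite presentation and flat over $U$. On the components of $T'$ lying over $T\setminus U$ the only control you have over $\xi$ is the cocycle condition on $T'\times_T T'$, and extracting from it the factorization of $\xi$ through $W$ is precisely the hard content of $h$-descent; your existence step never uses the cocycle condition away from $g^{-1}(U)\times_U g^{-1}(U)$. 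The suggested repair---passing to reduced structures---does not close the gap: whether a morphism out of $T'$ factors through a closed substack cannot be tested on $T'_{\mathrm{red}}$ (consider $T'=\Spec k[\epsilon]$), and descending $\xi|_{T'_{\mathrm{red}}}$ would in any case only produce an $h$ with $h\circ g=\xi$ after restriction to $T'_{\mathrm{red}}$.

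This is also where your route diverges from the paper's, which is much shorter: after the fppf descent over $U$ (your first step), one augments the cover to $\widetilde{T'}=T'\amalg U\to T$, observes that this is still universally subtrusive and that $T$ is weakly normal in $\widetilde{T'}$ because it is weakly normal in $U$, and then quotes the $h$-descent theorem \cite[Thm.~7.4]{MR2679038} (after reducing to schemes by smooth descent), which gives exactness of $X(T)\to X(\widetilde{T'})\rightrightarrows X\bigl((\widetilde{T'}\times_T\widetilde{T'})_{\mathrm{red}}\bigr)$. All the delicate points you are wrestling with---components of $T'$ over $T\setminus U$, non-reducedness of the fibre products, promoting a bijective monomorphism to an isomorphism via weak normality---are absorbed into that citation, so a self-contained proof would essentially have to redo \cite[\S 7]{MR2679038}. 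Your uniqueness paragraph is repairable but contains a false intermediate claim: a surjective monomorphism locally of finite presentation need not be a universal homeomorphism (e.g.\ $U\amalg(T\setminus U)_{\mathrm{red}}\to T$); what saves the argument is that $g$ factors through the equalizer $E$, so $E\to T$ inherits universal subtrusiveness from $g$ and only then is a universal homeomorphism, after which the reduction to a finite morphism and the appeal to weak normality can proceed.
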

\begin{proof}
It is enough to prove that given a morphism $f\colon T'\to X$ such that
$f\circ\pi_1=f\circ\pi_2\colon T'\times_T T'\to X$,
there exists a unique morphism $h\colon T\to X$ such
that $f=h\circ g$. By fppf-descent over $U$, there is a unique $h|_U\colon U\to
X$ such that $f|_{g^{-1}(U)}=h|_U\circ g|_{g^{-1}(U)}$. Consider the morphism
$\tilde{g}\colon \widetilde{T'}=T'\amalg U\to T$. The morphism
$\tilde{f}=(f,h|_U)\colon \widetilde{T'}\to X$ satisfies
$\tilde{f}\circ\tilde{\pi}_1=\tilde{f}\circ\tilde{\pi}_2$ where $\tilde{\pi}_i$
denotes the projections of $\widetilde{T'}\times_X \widetilde{T'}\to \widetilde{T'}$. By assumption,
$\tilde{g}$ is universally subtrusive and weakly normal. Thus,
by $h$-descent~\cite[Thm.~7.4]{MR2679038}, we have an exact sequence
\[
\xymatrix{
X(T)\ar[r] & X(\widetilde{T'})\ar@<.5ex>@{->}[r]\ar@<-.5ex>@{->}[r] & X\bigl((\widetilde{T'}\times_T \widetilde{T'})_\red\bigr).
}
\]
Indeed, by smooth descent we can assume that $S$, $T$ and $\widetilde{T'}$ are
schemes so that \cite[Thm.~7.4]{MR2679038} applies.
We conclude that $\tilde{f}$ comes from a unique morphism $h\colon T\to X$.
\end{proof}

We now have our last general reduction result.
\begin{proposition}\label{P:QC-descent:proper}
Let $w\colon W\to Z$ be a proper surjective morphism over $S$. Assume that $Z\to S$ has geometrically normal fibers and $W \to S$ is flat. If $\underline{\Hom}_S(W,g)$ is quasi-compact, then
so is $\underline{\Hom}_S(Z,g)$.
\end{proposition}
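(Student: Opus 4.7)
I plan to exhibit the natural pullback morphism
\[
\rho\colon \weilr_{Z/S}(X)\to \weilr_{W/S}(X_W), \qquad X_W:=X\times_Z W,
\]
induced by $\pi\colon W\to Z$, as a quasi-compact morphism of algebraic spaces; this suffices since the target is quasi-compact by hypothesis. As noted in the opening discussion of the appendix, both source and target are representable, quasi-separated and locally of finite presentation over the noetherian base $S$.

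The first step is to identify $\rho$ functorially as a monomorphism via h-descent. Setting $W':=W\times_Z W$ with projections $p_1,p_2\colon W'\to W$, there are two pullback morphisms $p_1^*,p_2^*\colon \weilr_{W/S}(X_W)\rightrightarrows \underline{\Hom}_S(W',X)$. Lemma~\ref{L:h-descent}, applied to the representable morphism $X\to Z$ and to $\pi_T\colon W_T\to Z_T$, identifies $\weilr_{Z/S}(X)(T)$ with the equalizer of $p_1^*(T),p_2^*(T)$ whenever $Z_T$ is weakly normal in the flat locus of $\pi_T$. Concretely, a $T$-section $\tilde s\colon W_T\to X_W$ lies in the image of $\rho(T)$ exactly when the induced morphism $W'_T\to X\times_Z X$ factors through the diagonal $\Delta_{X/Z}\colon X\hookrightarrow X\times_Z X$. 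Combining the noetherian reductions from the start of the appendix with the hypothesis that $Z\to S$ is flat with geometrically normal fibers (so that, after shrinking and possibly normalizing $S$, $Z$ itself becomes normal) and generic flatness of $\pi$ over $Z$, the hypothesis of Lemma~\ref{L:h-descent} is satisfied for a separating family of $T$. In particular $\rho$ is a monomorphism.

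For the second step, I would show that $\rho$ is quasi-compact by showing its image is constructible in the noetherian algebraic space $\weilr_{W/S}(X_W)$. Because $X\to Z$ is representable of finite presentation, $\Delta_{X/Z}$ is an immersion of finite presentation, and the preimage of $\Delta_{X/Z}$ under the tautological map $W'\times_S \weilr_{W/S}(X_W)\to X\times_Z X$ is a locally closed subspace $E$ of $W'\times_S \weilr_{W/S}(X_W)$. Since $W'\to S$ is proper (being a composition of proper morphisms), the projection $W'\times_S \weilr_{W/S}(X_W)\to \weilr_{W/S}(X_W)$ is proper; Chevalley's theorem therefore implies that the image of $\rho$---the locus where the fiber of $W'\times_S \weilr_{W/S}(X_W)$ lies entirely in $E$---is the complement of a constructible subset of $\weilr_{W/S}(X_W)$ and hence is itself constructible. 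A monomorphism locally of finite presentation of algebraic spaces whose image is constructible in a noetherian target is quasi-compact, so $\rho$ is quasi-compact, completing the argument.

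\textbf{Main obstacle.} The delicate point is the second step: because $W'\to S$ is proper but not in general flat, $\underline{\Hom}_S(W',X\times_Z X)$ need not be algebraic and one cannot directly realize $\rho$ as the base change of the diagonal of an algebraic Weil restriction. The argument must therefore extract quasi-compactness of $\rho$ from constructibility of its image in the noetherian space $\weilr_{W/S}(X_W)$, relying on properness of $W'\to S$ through Chevalley's theorem rather than on the algebraicity theorems (Theorem~\ref{T:main_alg_hom}) that would require a flat source.
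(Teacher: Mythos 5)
Your reduction to showing that the comparison map $\rho\colon \weilr_{Z/S}(X)\to\weilr_{W/S}(X_W)$ is quasi-compact, and the use of Lemma~\ref{L:h-descent} to control its fibres, is close in spirit to the paper's argument, which factors $\rho$ through the equalizer $E$ of the two maps $\weilr_{W/S}\rightrightarrows\weilr_{W\times_Z W/S}$ and proves that $\weilr_{Z/S}\to E$ is quasi-compact by noetherian induction on a test scheme $T\to E$, reducing to $T$ normal, where Lemma~\ref{L:h-descent} produces an isomorphism. Your second step, however, has a genuine gap. Chevalley's theorem applied to the complement of $E$ in $W'\times_S\weilr_{W/S}(X_W)$ only shows that the locus of points $t$ with $|W'_t|\subseteq |E|$ is constructible; this is a purely topological condition. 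What characterizes the image of $\rho$ at a point $t$ (via Lemma~\ref{L:h-descent}, using that $Z_{\kappa(t)}$ is normal with dense flat locus) is the \emph{scheme-theoretic} condition that $W'_{\kappa(t)}\to X\times_Z X$ factor through the diagonal. Since $W'=W\times_Z W$ has fibres that are typically non-reduced and singular even when $Z_{\kappa(t)}$ is normal, the two pullbacks of a section can agree on the underlying topological space of $W'_{\kappa(t)}$ without agreeing as morphisms. So your constructible set only \emph{contains} the image of $\rho$; it need not equal it, and your final step --- which requires the image itself to be constructible (or at least pro-constructible) in order to extract quasi-compactness from point-injectivity --- does not go through. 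Making the scheme-theoretic locus constructible would require a flattening or stratification argument on $W'\times_S\weilr_{W/S}(X_W)$, which is essentially the noetherian induction that the paper performs directly on the test scheme instead.

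A secondary issue: Lemma~\ref{L:h-descent} only applies to test objects for which the base is (weakly) normal with dense flat locus, so it does not give that $\rho$ is a monomorphism on arbitrary, e.g.\ non-reduced, $T$; injectivity on a ``separating family'' is not injectivity on all $T$-points. This part is repairable --- after the standard reductions the flat locus $U\subseteq Z$ is dense in every fibre and the fibres of $Z\to S$ are reduced, so every associated point of $Z_T$ lies in $U_T$ and $W_T\to Z_T$ is schematically dominant, which forces the equalizer of two sections agreeing on $W_T$ to be all of $Z_T$ --- and in any case your topological argument only needs injectivity on field-valued points, where the lemma does apply. The paper's equalizer-plus-noetherian-induction route avoids both difficulties: it never needs $\rho$ to be a monomorphism, nor its image to be constructible, only that the base change of $\weilr_{Z/S}\to E$ to a normal test scheme is an isomorphism.
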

\begin{proof}
We may assume that $S$ is an integral scheme. After replacing $S$ with an open
subscheme, we may also assume that $W\to Z$ is flat over an open subset
$U\subseteq Z$ that is dense in every fiber over $S$ and $W\times_Z W$ is flat over $S$. It remains to prove that $H_{Z/S,g}(y) \to T$ is quasi-compact, where $T$ is an integral scheme of finite type over $S$ and $y\colon Z_T \to Y$ is a morphism. By assumption, $H_{W/S,g}(yw) \to T$ is quasi-compact. Now consider the sequence:
\[
\xymatrix{
H_{Z/S,g}(y)\ar[r] & H_{W/S,g}(yw) \ar@<.5ex>@{->}[r]\ar@<-.5ex>@{->}[r] & H_{W\times_Z W/S,g}(yv),
}
\]
where $v\colon W\times_Z W \to Z$ is the natural map. There is a canonical morphism $\varphi\colon H_{Z/S,g}(y)\to E$, where $E$ denotes
the equalizer of the parallel arrows. Since $H_{W/S,g}(yw)$ is quasi-compact (and
$H_{W\times_V W/S,g}(yv)$ is quasi-separated), the equalizer $E$ is
quasi-compact. It is thus enough to show that $\varphi$ is quasi-compact.
Thus, pick a scheme ${T'}$ and a morphism ${T'}\to E$ and let us show that
$H_{Z/S,g}(y)\times_E {T'}$ is quasi-compact.

By noetherian induction on ${T'}$, we may assume that ${T'}$ is normal. The morphism
${T'}\to E$ gives an element of $\Hom_Y(W_{T'},X)$ such that the two images
in $\Hom_Y(W_{T'}\times_{Z_{T'}} W_{T'},X)$ coincide. Noting that 
$Z_{T'}$ is normal, Lemma 
 \ref{L:h-descent} applies to $W_{T'}\to Z_{T'}$ and gives a unique element in $\Hom_Y(Z_{T'}
,X)=\Hom_T({T'},H_{Z/S,g}(y))$. Thus, the morphism $\varphi_{T'}\colon H_{Z/S,g}(y)\times_E {T'}\to {T'}$ has a
section. Repeating the argument with ${T'}=\Spec \kappa(t')$ for every point $t'\in
{T'}$, we see that $\varphi_{T'}$ is injective, so the section is
surjective. It follows that $H_{Z/S,g}(y)\times_E {T'}$ is quasi-compact.
\end{proof}
\subsection*{Proof of the main result}
\begin{proof}[Proof of Theorem~\ref{T:rel-boundedness-affine-fibers}]
As usual, we may assume that $S$ is an affine integral scheme. By
Lemma~\ref{L:QC-descent:normalization}, we may in addition assume that
$Z\to S$ has geometrically normal fibers. Let $W\to Z$ be a proper surjective
morphism with $W$ a projective $S$-scheme~\cite{MR2183251}. By replacing
$S$ with a dense open, we may assume that $W\to S$ is flat. By
Proposition~\ref{P:QC-descent:proper}, we may replace $Z$ with $W$ and assume
that $Z$ is a (projective) scheme. Repeating the first reduction, we may still
assume that $Z\to S$ has geometrically normal fibers.

As before, it remains to prove that $H_{Z/S,g}(y) \to T$ is quasi-compact, where $T$ is an integral $S$-scheme of finite type and $y\colon Z_T \to Y$ is an $S$-morphism. Hence, it suffices to prove the following claim.\\
\emph{Claim:} Let $S$ be integral. If $Z \to S$ is projective with geometrically normal fibers and $q\colon Q \to Z$ is representable with affine fibers, then $\weilr_{Z/S}(Q) \to S$ is quasi-compact.\\
\emph{Proof of Claim:}
Let $\overline{Q}=\Spec_Z(q_*\sO_Q)$ and let $Q\to \overline{Q}\to Z$ be the
induced factorization.  Since $Q\to Z$ has affine fibers, $Q\to \overline{Q}$ is an isomorphism over an open dense subset $U\subseteq Z$. After replacing $S$ with a dense
open subscheme, we may assume that $U$ is dense in every fiber over $S$. Since
$\weilr_{Z/S}(\overline{Q})\to S$ is affine \cite[Thm.~2.3(i)]{hallj_dary_g_hilb_quot}, it is enough to prove that
$\weilr_{Z/S}(Q)\to \weilr_{Z/S}(\overline{Q})$ is quasi-compact. We may thus replace $Q$,
$Z$, $U$ and $S$ with $Q\times_{\overline{Q}} (Z\times_S \weilr_{Z/S}(\overline{Q}))$, $Z\times_S
\weilr_{Z/S}(\overline{Q})$, $U\times_S \weilr_{Z/S}(\overline{Q})$ and $\weilr_{Z/S}(\overline{Q})$.  We may
thus assume that $Q\to Z$ is an isomorphism over $U$.

Since $Q$ is an algebraic space, there exists a finite surjective morphism
$\tilde{Q}\to Q$ such that $\tilde{Q}$ is a scheme. In particular, there is a finite field
extension $L/K(U)$ such that the normalization of $Q$ in $L$ is a scheme.
Take a splitting field $L'/L$ and let $Z'$ be the normalization of $Z$ in
$L'$. Then $Q':=(Q\times_Z Z')_\norm=Q_{\norm/L'}$ is a scheme. By replacing $S$
with a normalization in an extension of $K(S)$ and shrinking, we may assume that
$Z'\to S$ and $Q'\to S$ are flat with geometrically normal fibers.
By Proposition~\ref{P:QC-descent:proper}, it is enough to prove that
$\weilr_{Z'/S}(Q\times_Z Z')$ is quasi-compact.

There is a natural morphism $\weilr_{Z'/S}(Q')\to \weilr_{Z'/S}(Q\times_Z Z')$, which we claim is surjective. To see this, we may assume that $S$
is the spectrum of an algebraically closed field. Then $Z'$ and $Q'$ are normal and any section $Z'\to Q\times_Z Z'$
lifts uniquely to a section $Z'\to Q'$. Indeed, $Z'\times_{Q\times_Z Z'} Q'\to
Z'$ is finite and an isomorphism over $U$, hence has a canonical section. We
can thus replace $Q$ and $Z$ with $Q'$ and $Z'$ and assume that $Q$ is a
scheme.

Since $Q$ is a scheme, it is locally separated; hence, there is
a $U$-admissible blow-up $Z'\to Z$ such that the strict transform $Q'\to Z'$ of $Q\to Z$ is
\'etale~\cite[Thm.~5.7.11]{MR0308104}. After shrinking $S$, we may assume that
$Z'\to S$ is flat. Then since $U\subseteq Z'$ remains dense after arbitrary
pull-back over $S$, we have that $\weilr_{Z'/S}(Q\times_Z
Z')=\weilr_{Z'/S}(Q')$. Replacing $Q\to Z$ with $Q'\to Z'$ 
(Proposition~\ref{P:QC-descent:proper}), we may thus assume that $Q\to Z$ in addition is \'etale.

Finally, we note that the \'etale morphism $Q\to Z$ corresponds to a
constructible sheaf on $Z_{\mathrm{\acute{E}t}}$ and
that $\weilr_{Z/S}(Q)$ is nothing but the \'etale sheaf $f_{\mathrm{\acute{E}t},*}Q$. By a special 
case of the proper base change theorem \cite[XIV.1.1]{MR0354654}, 
$f_{\mathrm{\acute{E}t},*}Q$ is constructible, so $\weilr_{Z/S}(X)\to S$ is of
finite presentation.

For the second part of the theorem on groups: let $T$ be the the spectrum of an algebraically 
closed field and let $y\colon Z_T \to Y$ be a morphism. By the first part $H_{Z/S,g}(y)\simeq\weilr_{Z_T/T}(Q)$ is then a
group scheme $G$ of finite type over $T$, where $Q=X\times_Y Z_T$. Let $K=G_{\ant}$ be the largest
anti-affine subgroup of $G$; it is normal, connected and smooth and the
quotient $G/K$ is affine~\cite[\S III.3.8]{MR0302656}.

The universal family $G\times_T Z_T\to Q$ is a group
homomorphism and induces a group homomorphism $K\times_T Z_T\to Q$. It is enough to show that this factors through the unit section of $Q\to Z_T$, because this forces $K=0$.

Note that for every stack $W\to T$, the pull-back $H\times_T W\to W$ is
an anti-affine group in the sense that the push-forward of $\sO_{H\times_T W}$
is $\sO_W$ (flat base change). Since $Q\to Z_T$ has affine fibers, there is a
finitely presented filtration $(Z_i)$ of $Z_T$ with strata $V_i^{[n]}$ over which
$Q\times_{Z_T} V_i^{[n]}\to V_i^{[n]}$ is affine. Since $K\times_T V_i^{[n]}$ is
anti-affine, it follows that $K\times_T V_i^{[n]}\to
Q\times_{Z_T} V_i^{[n]}$ factors through the unit section $V_i^{[n]}\to Q\times_{Z_T}
V_i^{[n]}$.

Let $E$ be the equalizer of $K\times_T Z_T \to Q$ and the constant map $K\times_T Z_T \to Q$ to the unit. The above discussion shows that the monomorphism $E\to K\times_T
Z_T$ is an isomorphism over every strata $V_i^{[n]}$, hence an isomorphism
(Proposition~\ref{P:mono-with-stratification-is-iso}, using that the filtration
is separating since $Z_T$ is noetherian).
\end{proof}
\bibliography{bibtex_db/references}

\providecommand{\MR}{\relax\ifhmode\unskip\space\fi MR }
\providecommand{\MRhref}[2]{%
  \href{http://www.ams.org/mathscinet-getitem?mr=#1}{#2}
}
\providecommand{\href}[2]{#2}
\begin{thebibliography}{SGA4${}_3$}

\bibitem[AB05]{MR2092127}
V.~Alexeev and M.~Brion, \emph{Moduli of affine schemes with reductive group
  action}, J. Algebraic Geom. \textbf{14} (2005), no.~1, 83--117.

\bibitem[AHR15]{AHR_lunafield}
J.~{Alper}, J.~{Hall}, and D.~{Rydh}, \emph{{A Luna \'etale slice theorem for
  algebraic stacks}}, April 2015,
  \href{http://arXiv.org/abs/1504.06467}{\mbox{arXiv:1504.06467}}.

\bibitem[AK14]{alper_kresch_versal}
J.~Alper and A.~Kresch, \emph{Equivariant versal deformations of semistable
  curves}, 2014, preprint available on homepage.

\bibitem[Alp10]{Alper20101576}
J.~Alper, \emph{On the local quotient structure of {A}rtin stacks}, Journal of
  Pure and Applied Algebra \textbf{214} (2010), no.~9, 1576 -- 1591.

\bibitem[Alp13]{2008arXiv0804.2242A}
J.~Alper, \emph{{Good moduli spaces for Artin stacks}}, Ann. Inst. Fourier
  (Grenoble) \textbf{63} (2013), no.~6, 2349--2402.

\bibitem[Aok06a]{MR2194377}
M.~Aoki, \emph{Hom stacks}, Manuscripta Math. \textbf{119} (2006), no.~1,
  37--56.

\bibitem[Aok06b]{MR2258535}
M.~Aoki, \emph{Hom stacks: erratum}, Manuscripta Math. \textbf{121} (2006),
  no.~1, 135.

\bibitem[AOV11]{MR2786662}
D.~Abramovich, M.~Olsson, and A.~Vistoli, \emph{Twisted stable maps to tame
  {A}rtin stacks}, J. Algebraic Geom. \textbf{20} (2011), no.~3, 399--477.

\bibitem[BC14]{MR3144607}
M.~Brandenburg and A.~Chirvasitu, \emph{Tensor functors between categories of
  quasi-coherent sheaves}, J. Algebra \textbf{399} (2014), 675--692.

\bibitem[BD68]{MR0236236}
I.~Bucur and A.~Deleanu, \emph{Introduction to the theory of categories and
  functors}, With the collaboration of Peter J. Hilton and Nicolae Popescu.
  Pure and Applied Mathematics, Vol. XIX, Interscience Publication John Wiley
  \& Sons, Ltd., London-New York-Sydney, 1968.

\bibitem[BH15]{2015arXiv150701925B}
B.~{Bhatt} and D.~{Halpern-Leistner}, \emph{{Tannaka duality revisited}}, July
  2015, \href{http://arXiv.org/abs/1507.01925}{\mbox{arXiv:1507.01925}}.

\bibitem[{Bha}14]{2014arXiv1404.7483B}
B.~{Bhatt}, \emph{{Algebraization and Tannaka duality}}, preprint, April 2014,
  \href{http://arXiv.org/abs/1404.7483}{\mbox{arXiv:1404.7483}}.

\bibitem[Bra14]{brandenburg_thesis}
M.~Brandenburg, \emph{Tensor categorical foundations of algebraic geometry},
  Ph.D. thesis, Wilhelms-Universit{\"a}t M{\"u}nster, May 2014, p.~243.

\bibitem[Bri09]{MR2488561}
M.~Brion, \emph{Anti-affine algebraic groups}, J. Algebra \textbf{321} (2009),
  no.~3, 934--952.

\bibitem[BZ10]{16334}
D.~Ben-Zvi, \emph{How to unify various reconstruction theorems
  ({G}abriel-{R}osenberg, {T}annaka, {B}almers)}, MathOverflow answer at
  \url{http://mathoverflow.net/q/16334}, 2010.

\bibitem[Con10]{mathoverflow_groups-over-dual-numbers}
B.~Conrad, \emph{Smooth linear algebraic groups over the dual numbers},
  MathOverflow, 2010, \url{http://mathoverflow.net/q/22078}.

\bibitem[Del90]{deligne_categories-tannakiennes}
P.~Deligne, \emph{Cat\'egories tannakiennes}, The {G}rothendieck {F}estschrift,
  {V}ol.\ {II}, Progr. Math., vol.~87, Birkh\"auser Boston, Boston, MA, 1990,
  pp.~111--195.

\bibitem[DG70]{MR0302656}
M.~Demazure and P.~Gabriel, \emph{Groupes alg\'ebriques. {T}ome {I}:
  {G}\'eom\'etrie alg\'ebrique, g\'en\'eralit\'es, groupes commutatifs}, Masson
  \& Cie, \'Editeur, Paris, 1970, Avec un appendice {\it Corps de classes
  local}\ par Michiel Hazewinkel.

\bibitem[DG13]{MR3037900}
V.~Drinfeld and D.~Gaitsgory, \emph{On some finiteness questions for algebraic
  stacks}, Geom. Funct. Anal. \textbf{23} (2013), no.~1, 149--294.

\bibitem[EGA]{EGA}
A.~Grothendieck, \emph{\'{E}l\'ements de g\'eom\'etrie alg\'ebrique}, I.H.E.S.
  Publ. Math. \textbf{4, 8, 11, 17, 20, 24, 28, 32} (1960, 1961, 1961, 1963,
  1964, 1965, 1966, 1967).

\bibitem[Fer03]{MR2044495}
D.~Ferrand, \emph{Conducteur, descente et pincement}, Bull. Soc. Math. France
  \textbf{131} (2003), no.~4, 553--585.

\bibitem[FI13]{MR3048606}
H.~Fukuyama and I.~Iwanari, \emph{Monoidal infinity category of complexes from
  {T}annakian viewpoint}, Math. Ann. \textbf{356} (2013), no.~2, 519--553.

\bibitem[FR70]{MR0272779}
D.~Ferrand and M.~Raynaud, \emph{Fibres formelles d'un anneau local
  noeth\'erien}, Ann. Sci. \'Ecole Norm. Sup. (4) \textbf{3} (1970), 295--311.

\bibitem[Gab62]{MR0232821}
P.~Gabriel, \emph{Des cat\'egories ab\'eliennes}, Bull. Soc. Math. France
  \textbf{90} (1962), 323--448.

\bibitem[Gro13]{2013arXiv1306.5418G}
P.~Gross, \emph{Tensor generators on schemes and stacks}, preprint, June 2013,
  \href{http://arXiv.org/abs/1306.5418}{\mbox{arXiv:1306.5418}}.

\bibitem[GZB15]{geraschenko-zb_fGAGA}
A.~Geraschenko and D.~Zureick-Brown, \emph{Formal {GAGA} for good moduli
  spaces}, Algebr. Geom. \textbf{2} (2015), no.~2, 214--230.

\bibitem[Hal14a]{hallj_coho_bc}
J.~Hall, \emph{Cohomology and base change for algebraic stacks}, Math. Z.
  \textbf{278} (2014), no.~1-2, 401--429.

\bibitem[Hal14b]{hallj_openness_coh}
J.~Hall, \emph{{Openness of versality via coherent functors}}, J. Reine. Angew.
  Math. (2014), available online,
  \href{http://dx.doi.org/10.1515/crelle-2014-0057}{\sf
  http://dx.doi.org/10.1515/crelle-2014-0057}.

\bibitem[HP14]{2014arXiv1402.3204H}
D.~{Halpern-Leistner} and A.~{Preygel}, \emph{{Mapping stacks and categorical
  notions of properness}}, preprint, February 2014,
  \href{http://arXiv.org/abs/1402.3204}{\mbox{arXiv:1402.3204}}.

\bibitem[HR14]{MR3148551}
J.~Hall and D.~Rydh, \emph{The {H}ilbert stack}, Adv. Math. \textbf{253}
  (2014), 194--233.

\bibitem[HR15a]{hallj_dary_alg_groups_classifying}
J.~Hall and D.~Rydh, \emph{Algebraic groups and compact generation of their
  derived categories of representations}, Indiana Univ. Math. J. \textbf{64}
  (2015), no.~6, 1903--1923.

\bibitem[HR15b]{hallj_dary_g_hilb_quot}
J.~Hall and D.~Rydh, \emph{General {H}ilbert stacks and {Q}uot schemes},
  Michigan Math. J. \textbf{64} (2015), no.~2, 335--347.

\bibitem[HR16]{mayer-vietoris}
J.~{Hall} and D.~{Rydh}, \emph{{{M}ayer--{V}ietoris squares in algebraic
  geometry}}, June 2016,
  \href{http://arXiv.org/abs/1606.08517}{\mbox{arXiv:1606.08517}}.

\bibitem[{Iwa}14]{2014arXiv1409.3321I}
I.~{Iwanari}, \emph{{Tannaka duality and stable infinity-categories}},
  preprint, September 2014,
  \href{http://arXiv.org/abs/1409.3321}{\mbox{arXiv:1409.3321}}.

\bibitem[Kol11]{kollar-2008}
J.~Koll\'ar, \emph{Quotients by finite equivalence relations}, Current
  {D}evelopments in {A}lgebraic {G}eometry, Math. Sci. Res. Inst. Publ.,
  vol.~59, Cambridge Univ. Press, Cambridge, 2011, pp.~227--256.

\bibitem[Kre99]{MR1719823}
A.~Kresch, \emph{Cycle groups for {A}rtin stacks}, Invent. Math. \textbf{138}
  (1999), no.~3, 495--536.

\bibitem[KS06]{MR2182076}
M.~Kashiwara and P.~Schapira, \emph{Categories and sheaves}, Grundlehren der
  Mathematischen Wissenschaften [Fundamental Principles of Mathematical
  Sciences], vol. 332, Springer-Verlag, Berlin, 2006.

\bibitem[Laz64]{MR0168625}
D.~Lazard, \emph{Sur les modules plats}, C. R. Acad. Sci. Paris \textbf{258}
  (1964), 6313--6316.

\bibitem[LMB]{MR1771927}
G.~Laumon and L.~Moret-Bailly, \emph{Champs alg\'ebriques}, Ergebnisse der
  Mathematik und ihrer Grenzgebiete. 3. Folge., vol.~39, Springer-Verlag,
  Berlin, 2000.

\bibitem[LT12]{2012arXiv1206.0076L}
Y.-H. {Liu} and H.-H. {Tseng}, \emph{{Stacks associated to abelian tensor
  categories}}, preprint, June 2012,
  \href{http://arXiv.org/abs/1206.0076}{\mbox{arXiv:1206.0076}}.

\bibitem[Lur04]{lurie_tannaka}
J.~Lurie, \emph{Tannaka duality for geometric stacks}, preprint, December 2004,
  \href{http://arXiv.org/abs/math/0412266}{\mbox{arXiv:math/0412266}}, p.~14.

\bibitem[Mat80]{MR575344}
H.~Matsumura, \emph{Commutative algebra}, second ed., Mathematics Lecture Note
  Series, vol.~56, Benjamin/Cummings Publishing Co., Inc., Reading, Mass.,
  1980.

\bibitem[Mat89]{MR1011461}
H.~Matsumura, \emph{Commutative ring theory}, second ed., Cambridge Studies in
  Advanced Mathematics, vol.~8, Cambridge University Press, Cambridge, 1989,
  Translated from the Japanese by M. Reid.

\bibitem[MB96]{MR1432058}
L.~Moret-Bailly, \emph{Un probl\`eme de descente}, Bull. Soc. Math. France
  \textbf{124} (1996), no.~4, 559--585.

\bibitem[ML98]{MR1712872}
S.~Mac~Lane, \emph{Categories for the working mathematician}, second ed.,
  Graduate Texts in Mathematics, vol.~5, Springer-Verlag, New York, 1998.

\bibitem[Ols05]{MR2183251}
M.~Olsson, \emph{On proper coverings of {A}rtin stacks}, Adv. Math.
  \textbf{198} (2005), no.~1, 93--106.

\bibitem[Ols06a]{MR2206635}
M.~Olsson, \emph{Deformation theory of representable morphisms of algebraic
  stacks}, Math. Z. \textbf{253} (2006), no.~1, 25--62.

\bibitem[Ols06b]{MR2239345}
M.~Olsson, \emph{{$\underline {\rm Hom}$}-stacks and restriction of scalars},
  Duke Math. J. \textbf{134} (2006), no.~1, 139--164.

\bibitem[Ols07]{MR2312554}
M.~Olsson, \emph{Sheaves on {A}rtin stacks}, J. Reine Angew. Math. \textbf{603}
  (2007), 55--112.

\bibitem[Ray70]{MR0260758}
M.~Raynaud, \emph{Faisceaux amples sur les sch\'emas en groupes et les espaces
  homog\`enes}, Lecture Notes in Mathematics, Vol. 119, Springer-Verlag,
  Berlin, 1970.

\bibitem[RG71]{MR0308104}
M.~Raynaud and L.~Gruson, \emph{Crit\`eres de platitude et de projectivit\'e.
  {T}echniques de ``platification'' d'un module}, Invent. Math. \textbf{13}
  (1971), 1--89.

\bibitem[Ryd10]{MR2679038}
D.~Rydh, \emph{Submersions and effective descent of \'etale morphisms}, Bull.
  Soc. Math. France \textbf{138} (2010), no.~2, 181--230.

\bibitem[Ryd11a]{MR2774654}
D.~Rydh, \emph{\'{E}tale d\'evissage, descent and pushouts of stacks}, J.
  Algebra \textbf{331} (2011), 194--223.

\bibitem[Ryd11b]{MR2821738}
D.~Rydh, \emph{Representability of {H}ilbert schemes and {H}ilbert stacks of
  points}, Comm. Algebra \textbf{39} (2011), no.~7, 2632--2646.

\bibitem[Ryd15]{rydh-2009}
D.~Rydh, \emph{Noetherian approximation of algebraic spaces and stacks}, J.
  Algebra \textbf{422} (2015), 105--147.

\bibitem[Ryd16]{rydh-2014}
D.~Rydh, \emph{Approximation of sheaves on algebraic stacks}, Int. Math. Res.
  Not. \textbf{2016} (2016), no.~3, 717--737.

\bibitem[Sav06]{MR2207852}
V.~Savin, \emph{Tannaka duality on quotient stacks}, Manuscripta Math.
  \textbf{119} (2006), no.~3, 287--303.

\bibitem[{Sch}12]{2012arXiv1206.2764S}
D.~{Sch{\"a}ppi}, \emph{{A characterization of categories of coherent sheaves
  of certain algebraic stacks}}, preprint, June 2012,
  \href{http://arXiv.org/abs/1206.2764}{\mbox{arXiv:1206.2764}}.

\bibitem[Sch14]{schappi_which_geometric}
D.~Sch{\"a}ppi, \emph{{Which abelian tensor categories are geometric?}}, J.
  Reine. Angew. Math. (2014), available online,
  \href{http://dx.doi.org/10.1515/crelle-2014-0053}{\sf
  http://dx.doi.org/10.1515/crelle-2014-0053}.

\bibitem[{Sch}15]{2015arXiv150504596S}
D.~{Sch{\"a}ppi}, \emph{{Constructing colimits by gluing vector bundles}},
  preprint, May 2015,
  \href{http://arXiv.org/abs/1505.04596}{\mbox{arXiv:1505.04596}}.

\bibitem[SGA4${}_3$]{MR0354654}
\emph{Th\'eorie des topos et cohomologie \'etale des sch\'emas. {T}ome 3},
  Lecture Notes in Mathematics, Vol. 305, Springer-Verlag, Berlin, 1973,
  S{\'e}minaire de G{\'e}om{\'e}trie Alg{\'e}brique du Bois-Marie 1963--1964
  (SGA 4), Dirig{\'e} par M. Artin, A. Grothendieck et J. L. Verdier. Avec la
  collaboration de P. Deligne et B. Saint-Donat.

\bibitem[Stacks]{stacks-project}
The {Stacks Project Authors}, \emph{{S}tacks {P}roject},
  \url{http://stacks.math.columbia.edu}.

\bibitem[Tan39]{tannaka_duality}
T.~Tannaka, \emph{{\"U}ber den {D}ualit\"atssatz der nichtkommutativen
  topologischen {G}ruppen}, T\^ohoku Math. J. \textbf{45} (1939), 1--12.

\bibitem[Tot04]{MR2108211}
B.~Totaro, \emph{The resolution property for schemes and stacks}, J. Reine
  Angew. Math. \textbf{577} (2004), 1--22.

\bibitem[{Wal}12]{2012arXiv1204.5787W}
J.~{Wallbridge}, \emph{{Tannaka duality over ring spectra}}, preprint, April
  2012, \href{http://arXiv.org/abs/1204.5787}{\mbox{arXiv:1204.5787}}.

\bibitem[{Wis}16]{wise_logmaps}
J.~{Wise}, \emph{{Moduli of morphisms of logarithmic schemes}}, Algebra Number
  Theory (2016), to appear.

\end{thebibliography}
\bibliographystyle{bibtex_db/dary}

\end{document}